\date{}
\def\@tocline#1#2#3#4#5#6#7{\relax
  \ifnum #1>\c@tocdepth % then omit
  \else
    \par \addpenalty\@secpenalty\addvspace{#2}%
    \begingroup \hyphenpenalty\@M
    \@ifempty{#4}{%
      \@tempdima\csname r@tocindent\number#1\endcsname\relax
    }{%
      \@tempdima#4\relax
    }%
    \parindent\z@ \leftskip#3\relax \advance\leftskip\@tempdima\relax
    \rightskip\@pnumwidth plus4em \parfillskip-\@pnumwidth
    #5\leavevmode\hskip-\@tempdima
      \ifcase #1
       \or\or \hskip 1em \or \hskip 2em \else \hskip 3em \fi%
      #6\nobreak\relax
    \dotfill\hbox to\@pnumwidth{\@tocpagenum{#7}}\par% <---- \dotfill -> \hfill
    \nobreak
    \endgroup
  \fi}
\theoremstyle{plain}
\newtheorem{theorem}{Theorem}[section]
\newtheorem{proposition}[theorem]{Proposition}
\newtheorem{lemma}[theorem]{Lemma}
\theoremstyle{definition}
\newtheorem{definition}[theorem]{Definition}
\newtheorem{remark}[theorem]{Remark}
\theoremstyle{remark}
\newcommand{\na}{\mathbb{N}}
\newcommand{\re}{\mathbb{R}}
\def\huz{H^1_0 (\Omega)}
\def\lio{{L^{\infty}(\Omega)}}
\def\into{\int_{\Omega}}
\def\dys{\displaystyle}
\numberwithin{equation}{section}
\title[Continuum of bifurcation points with no emanating continua]{Intervals of Bifurcation points for semilinear Elliptic problems}
\author[J. Carmona Tapia]{Jos\'{e} Carmona Tapia}
\address{Departamento de Matem\'aticas\\ Uni\-ver\-si\-dad de Alme\-r\'ia\\Ctra. Sacramento s/n\\
La Ca\~{n}ada de San Urbano\\ 04120 - Al\-me\-r\'{\i}a, Spain}
\email{jcarmona@ual.es}
\author[A. J. Mart\'{i}nez Aparicio]{Antonio J. Mart\'{i}nez Aparicio}
\address{Departamento de Matem\'aticas\\ Uni\-ver\-si\-dad de Alme\-r\'ia\\Ctra. Sacramento s/n\\
La Ca\~{n}ada de San Urbano\\ 04120 - Al\-me\-r\'{\i}a, Spain}
\email{ajmaparicio@ual.es}
\author[P. J. Mart\'{i}nez-Aparicio]{Pedro J. Mart\'{i}nez-Aparicio}
\address{Departamento de Matem\'aticas\\ Uni\-ver\-si\-dad de Alme\-r\'ia\\Ctra. Sacramento s/n\\
La Ca\~{n}ada de San Urbano\\ 04120 - Al\-me\-r\'{\i}a, Spain}
\email{pedroj.ma@ual.es}
\keywords{Continua of solutions, intervals of bifurcation points, semilinear problems.\\
\indent 2010 {\it Mathematics Subject Classification. 35B32, 35B40, 35J25, 35J61. } }
\begin{document}

\maketitle

\begin{abstract}
In this paper, we study the behavior of multiple continua of solutions to the semilinear elliptic problem
\begin{equation*}
    \begin{cases}
   -\Delta u = \lambda f(u) &\text{ in } \Omega,\\
   u=0 &\text{ on } \partial\Omega,
    \end{cases}
\end{equation*}
where $\Omega$ is a bounded open subset of $\re^N$ and $f$ is a nonnegative continuous real function with multiple zeros. We analyze both the behavior of unbounded continua of solutions having norm between consecutive zeros of $f$, and the asymptotic behavior of the multiple unbounded continua in the case in which $f$ has a countable infinite set of positive zeros. In both cases, we pay special attention to the multiplicity results they give rise to. For the model cases $f(t) = t^r(1+\sin t)$ and $f(t) = t^r \left(1+\sin \frac{1}{t}\right)$ with $r\geq 0$ we show the surprising fact that there are some values of $r$ for which every $\lambda>0$ is a bifurcation point (either from infinity or from zero) that is not a branching point.
\end{abstract}

\tableofcontents

\section{Introduction and statement of the main results}

We consider the following semilinear elliptic boundary value problem
\begin{equation}
    \tag{$P_\lambda$}
    \label{eq:Problema_general}
    \begin{cases}
   -\Delta u = \lambda f(u) &\text{ in } \Omega,\\
   u=0 &\text{ on } \partial\Omega,
    \end{cases}
\end{equation}
where $\Omega$ is a smooth open and bounded subset of $\re^N$ and $f$ is a nonnegative continuous real function having a countable set of positive zeros, either finite or infinte. We are interested in positive solutions of~\eqref{eq:Problema_general} and thus we may assume that $f(s) = f(0) \geq 0$ for every $s \leq 0$ which implies that any solution is nonnegative. 

A very useful technique used in the literature  to describe the solution set of bounded positive solutions of \eqref{eq:Problema_general} is to study the existence and global behavior of connected and closed subsets (continua) in 
\[
\mathcal{S}=\{(\lambda,u)\in \mathbb{R}_0^+\times L^\infty(\Omega): u \textrm{ solves } \eqref{eq:Problema_general}\}.
\]
The trivial set $\mathcal{T}=[0,+\infty) \times \{ 0 \}$ is such a continuum when $f(0)=0$. In this case, as a result of the pioneering work \cite{Rab} in this type of method, existence of continua of nontrivial solutions is related to bifurcation points from the line of trivial solutions, that is, points in $\mathcal{B}_0=\mathcal{T}\cap \overline{\mathcal{S}\setminus \mathcal{T}}$. Even more, it is also related to bifurcation points from infinity, that is, points in $\mathcal{B}_\infty=\mathcal{T}\cap \overline{\mathcal{S}_\infty}$, where
\[
\mathcal{S}_\infty = \left\{(\lambda,u)\in \mathbb{R}_0^+\times L^\infty(\Omega): u\ne 0, \frac{u}{\|u\|^2_{L^\infty(\Omega)}} \textrm{ solves } \eqref{eq:Problema_general} \right\}.
\]
The set $\mathcal{B}_0$ has been well studied when $f(0)=0$ and $f$ is strictly positive in $(0,\varepsilon)$ for some $\varepsilon>0$ and depends on the behavior of $f$ at zero. Thus, when 
$f$ is sublinear, superlinear or asymptotically linear at zero (i.e. $\lim_{s\to 0^+} {f(s)}/{s} = 0$, $\lim_{s\to 0^+} {f(s)}/{s} = +\infty$ or $\lim_{s\to 0^+} {f(s)}/{s} := m_0>0$) then, respectively, $\mathcal{B}_0=\emptyset$, $\mathcal{B}_0=\{(0,0)\}$ or $\mathcal{B}_0=\{(\lambda_0,0)\}$ for some $\lambda_0>0$. Furthermore, in the case $\mathcal{B}_0\ne \emptyset$, using \cite{Rab}, the connected component of $\overline{\mathcal{S}\setminus \mathcal{T}}$ containing $\mathcal{B}_0$ is an unbounded continuum (global bifurcation) $\Sigma_0$.  In \cite{Rab} it is also proved, whenever $f(0)>0$, that $\mathcal{B}_0=\{(0,0)\}$ and there exists an unbounded continuum $\Sigma\subset \mathcal{S}$ with $(0,0)\in \Sigma$.

Analogously, whenever $f$ is positive in $(M,+\infty)$ for some $M>0$ and it is sublinear, superlinear (and subcritical) or asymptotically linear at infinity (i.e. $\lim_{s\to +\infty} {f(s)}/{s} = 0$, $\lim_{s\to +\infty} {f(s)}/{s} = +\infty$ or $\lim_{s\to +\infty} {f(s)}/{s} := m_\infty>0$) then, respectively, $\mathcal{B}_\infty=\emptyset$, $\mathcal{B}_\infty=\{(0,0)\}$ or $\mathcal{B}_\infty=\{(\lambda_\infty,0)\}$ for some $\lambda_\infty>0$.  Also, in the case $\mathcal{B}_\infty\ne \emptyset$, global bifurcation may occur. %the connected component of $\overline{\mathcal{S}_\infty \setminus \mathcal{T}}$ containing $\mathcal{B}_\infty$ is an unbounded continuum $\Sigma_\infty$.  

We recommend the reader the interesting survey \cite{Lions} with a great content where different behaviors at zero and infinity are considered. 

We remark explicitly that in the previous cases $\mathcal{B}_0$ and $\mathcal{B}_\infty$ have at most one element. When $f$ is asymptotically linear either at zero or at infinity, since we are looking for positive solutions, $\lambda_0$ and $\lambda_\infty$ correspond, see \cite{Amb-Hess}, with the first eigenvalue for a weighted eigenvalue problem.

When global bifurcation occurs and $f$ has an oscillatory asymptotically linear behavior at infinity, then the global behavior of the continuum may lead to an interval of bifurcation points at infinity as in \cite{Rynne} and \cite{XSOW}. In this case $\mathcal{B}_\infty$ contains a bounded segment in $\mathcal{T}$.

As a consequence of the results proved in \cite{correa1} it may also be deduced that $\mathcal{B}_\infty$ contains a segment in $\mathcal{T}$ for particular functions $f$ having a divergent sequence of zeros.  This phenomenon of a whole interval of bifurcation points was first discovered in \cite{BenciFortunato} for problems in $\mathbb{R}^N$, see also \cite{Stuart} for non uniformly elliptic operators in bounded domains.

As far as we know, general existence results of an unbounded interval of bifurcation points, either at zero or at infinity,  for positive solutions to uniformly elliptic problems in bounded domains are unknown.

%We now analyze the hypotheses used in the previous works. 
%he quitado la frase de arriba ya que los trabajos anteriores no usaban esto y puede llevar a confusión. Además solo es una hipótesis, y más que analizarla, decimos solo algunas consecuencias

We now analyze the case where $f$ has a positive zero. Let us recall that if for some $0<\sigma$, $f(\sigma)=0$ and there exists $M > 0$ such that
\begin{equation}
    \label{eq:hip_f_creciente}
    \tag*{(\theequation)$_\sigma$}
    \refstepcounter{equation}
    f(s)+M s \text{ is increasing for } s\in(0,\sigma),
\end{equation}
then, as a consequence of the strong maximum principle, it is well known that 
\begin{equation}\label{equ:noexis}
 \mathcal{S} \cap \{(\lambda,u)\in \mathbb{R}\times L^\infty(\Omega): \|u\|_{L^\infty(\Omega)}=\sigma\}=\emptyset.
\end{equation}
Therefore, for locally Lipschitz functions $f$ with zeros, when $(\lambda,u)$ is located in a connected component of $\mathcal{S}\setminus  \mathcal{T}$ then $\|u\|_{L^\infty(\Omega)}$ lies in an open interval where $f$ is strictly positive. In the case in which this maximal interval is $(\alpha, \beta)$, for some   $0<\alpha<\beta<+\infty$ positive real zeros of $f$,  those connected component are continua in 
\[
\mathcal{S}_{\alpha\beta}=\{(\lambda,u)\in \mathbb{R}_0^+\times L^\infty(\Omega): u \textrm{ solves } \eqref{eq:Problema_general} \textrm{ and } \alpha<\|u\|_{L^\infty(\Omega)}<\beta\}.
\]
Using again the arguments in \cite{Rab}, for locally Lipschitz functions $f$, vertically isolated solutions in $\mathcal{S}_{\alpha\beta}$ with nontrivial Leray-Schauder index give rise to two different unbounded continua in $\mathcal{S}_{\alpha\beta}$. This is the case for $N=1$, see \cite{correa2}, where the uniqueness of solutions with given $L^\infty(\Omega)$ norm can be easily deduced. For $N>1$, existence and multiplicity of solutions for large $\lambda$ are well known since the pioneering work \cite{Hess}, see also \cite{Brown-Budin} for the ordinary case. However, although this multiplicity result sugests the existence of an unbounded continuum with two branches of different solutions ($\subset$-shape), see \cite{Lions, Liu}, existence and global behavior of continua in $\mathcal{S}_{\alpha\beta}$ has been less studied when $N>1$.

Existence and multiplicity results have been extended recently to nonlocal operators in \cite{A-C-MA} and \cite{CR}. Also the behavior of the solutions of~\eqref{eq:Problema_general} when $\lambda$ tends to infinity has been analyzed in~\cite{B-GM-I} and~\cite{GM-I}.

%They proved that for large enough $\lambda$ there exists two ordered solutions $u_\lambda<v_\lambda$ verifying that $\|u_\lambda\|_{L^\infty(\Omega)} < \alpha < \|v_\lambda\|_{L^\infty(\Omega)}$ and both converging to $\alpha$ uniformly on compact subsets of $\Omega$ as $\lambda\to \infty$. It should be stressed that the solution $u_\lambda$ is maximal among the solutions in the order interval $[0,\alpha]$ and that the properties related to $u_\lambda$ are true even if~\eqref{eq:intro_GM_2} is not imposed.

%Recently, in \cite{A-C-MA} the authors studied the existence of positive solution in the case of  the Kirchhoff elliptic problem associated to \eqref{eq:Problema_general} and in \cite{CR}  the authors study the existence, nonexistence and multiplicity of bounded solutions to a nonlinear BVP associated to the fractional Laplacian.

The purpose of this paper is to get a global picture of the solution set $\mathcal{S}$ of~\eqref{eq:Problema_general} for a general nonnegative continuous function $f$ with zeros. Our main objective is to clearly describe how the set of zeros of $f$, in all possible cases, affects to the solution set $\mathcal{S}$ and the sets of bifurcation points $\mathcal{B}_0$ and $\mathcal{B}_\infty$ and also describe, when possible, the continua of solutions in $\mathcal{S}$, particularly those in $\mathcal{S}_{\alpha\beta}$ and providing results of great interest with our study.

%In this paper we deal with a more general $f$ assuming only that it is a nonnegative continuous function and that it satisfies~\eqref{eq:hip_f_creciente}, a condition implicitly included in all the above mentioned works. 

%The purpose of this paper  is to get  a global picture of the bifurcation diagram of~\eqref{eq:Problema_general} in several situations. Our main objective is to clearly describe all the cases that exist, which of them have already been studied and the properties that have been proven, and to make it clear that we provide results of great interest with our study.  
%

First, we show that condition~\ref{eq:hip_f_creciente}, a condition implicitly included in all the above mentioned works, can be weakened to obtain \eqref{equ:noexis}. Namely, we assume only that there exist $M > 0$ and $\varepsilon>0$ depending on $\sigma$ such that 
\begin{equation}
    \label{eq:hip_f_creciente_3}
    \refstepcounter{equation}
    f(s)+M s \text{ is increasing for } s\in(\sigma-\varepsilon,\sigma).
    \tag*{(\theequation)$_\sigma$}
\end{equation}
An example of function satisfying \ref{eq:hip_f_creciente_3} and not \ref{eq:hip_f_creciente} is $f(s)=|1-\sqrt s|$ with $\sigma=1$. Subsequently, existence of solution in $\mathcal{S}_{\alpha\beta}$ is proved, for large $\lambda$, when this condition \ref{eq:hip_f_creciente_3} is satisfied for $\sigma=\alpha$ and $\sigma=\beta$.

%we describe the behavior of the solutions when the function $f$ has a finite number of positive zeros. In order to clarify it, we treat the case in which the solutions are between two consecutive zeros $\alpha$ and $\beta$ of $f$.  

We also prove the existence of a continuum of solutions in $\mathcal{S}_{\alpha\beta}$ which is unbounded on the $\lambda$-axis having a $\subset$-shape when \ref{eq:hip_f_creciente} is satisfied for $\sigma=\beta$. Moreover, the projection of this continuum on the $\lambda$-axis gives the maximal interval $[\lambda_{\alpha\beta}, +\infty)$ for the existence of solution with norm in $(\alpha, \beta)$. To our knowledge, this result was unknown for $N>1$. Moreover, we complete the global picture of the solution set $\mathcal{S}$ by describing the existence of continua of solutions with norm below the first positive zero or above the last one, in the case that these are positive real numbers. 

%Secondly, and no less important, we prove the first general existence result of an unbounded interval of bifurcation points for positive solutions to problem \eqref{eq:Problema_general} in bounded domains. 
% He quitado la frase de arriba ya que parece que hacemos los intervalos de puntos de bifurcación para una $f$ general, pero solo lo sabemos hacer para unos casos concretos.

Secondly, and no less important, we consider the case in which $f$ has a monotone sequence $\{\alpha_n\}$ of positive zeros and we study the asymptotic behavior of the infinitely many solution sets $\mathcal{S}_n$ between each pair of zeros ($\mathcal{S}_n\equiv \mathcal{S}_{\alpha_n\alpha_{n+1}}$ or $\mathcal{S}_n\equiv \mathcal{S}_{\alpha_{n+1}\alpha_n}$). We show that, in the case $\alpha_n\to \alpha\in (0,+\infty)$, independently of $f$, the sequence 
\begin{equation}\label{equ:lambda:n}
\lambda_n=\inf\{\lambda\in (0,\infty): (\lambda,u)\in \mathcal{S}_n \textrm{ for some $u$}\}
\end{equation}
diverges, while in the case $\alpha=0$ or $\alpha=+\infty$, depending on $f$, the sequence $\lambda_n$ may diverge or not. In this later case, when $\lambda_n$ does not diverge, then an unbounded interval of bifurcations points arises. We illustrate this more in depth for the  problem~\eqref{eq:Problema_general} with $f(t) = t^r(1+\sin t)$ and $f(t) = t^r \left(1+\sin \frac{1}{t}\right)$ where $r > 0$. We obtain, for some values of $r$ a whole interval of bifurcation points, either from zero or from infinity. 
We remark that, in order to prove our main results, we deal with several properties of the solutions that, in our opinion, are very interesting in the literature of this type of problems.

All the results that we present in this work are expected to have a great impact on the literature on these type of problems, and we believe that they will be very useful in the research of related problems. 

The first result of this paper, related to the existence of a continuum with $\subset$-shape, is stated as follows. 
%Here, by a continuum of solutions we mean a connected subset of $\mathcal{S}$, the closure of the set of the solution pairs $(\lambda, u)$ of~\eqref{eq:Problema_general}, i.e.,
%\[
%\mathcal{S}= cl\{(\lambda,u)\in \re\times \huz : u \text{ is a solution of } \eqref{eq:Problema_general}\}.
%\]
%
\begin{theorem}
\label{teor:continuo}
Let $f$ be a nonnegative function and let $0<\alpha<\beta$ be two consecutive zeros of $f$. Assume
that~\ref{eq:hip_f_creciente_3} is satisfied for $\sigma=\alpha$ and $\sigma=\beta$. Let us also denote 
$\lambda_{\alpha\beta}=\inf\{\lambda\in (0,\infty): (\lambda,u)\in \mathcal{S}_{\alpha\beta} \textrm{ for some $u$}\}$. Then $\lambda_{\alpha\beta}>0$ and for every $\lambda\geq \lambda_{\alpha\beta}$ there exists $(\lambda,u)\in \mathcal{S}_{\alpha\beta}$. Moreover, if~\ref{eq:hip_f_creciente} holds true for $\sigma=\beta$, then there exists an unbounded continuum $\Sigma \subset \mathcal{S}_{\alpha\beta}$ with $\subset$-shape such that:
\begin{enumerate}[i)]
%\item $\|u\|_{L^\infty(\Omega)}\in (\alpha,\beta)$ for every $(\lambda, u)\in \Sigma$.
\item $\lambda_{\alpha\beta} = \min\left( \mathrm{Proj}_{\lambda}\Sigma \right)$.
\item For every $\lambda> \lambda_{\alpha\beta}$ there exist at least two solutions $u_1, u_2$ of \eqref{eq:Problema_general}  such that $u_1\not\equiv u_2$ and
$(\lambda,u_1),(\lambda, u_2)\in \Sigma$. 
\end{enumerate}
\end{theorem}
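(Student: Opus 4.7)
The proof splits naturally into an existence part (the first two assertions and the value of $\lambda_{\alpha\beta}$) and a continuation part (the existence of the $\subset$-shaped continuum and the multiplicity on it).

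For the existence part I would begin with $\lambda_{\alpha\beta}>0$, which follows from $L^p$-theory: any $(\lambda,u)\in\mathcal{S}_{\alpha\beta}$ satisfies $-\Delta u=\lambda f(u)$ with $\|f(u)\|_\infty\le M_\beta:=\max_{[0,\beta]}f$, so $W^{2,p}$-estimates yield $\alpha<\|u\|_\infty\le C\lambda M_\beta$ and hence $\lambda\ge\alpha/(CM_\beta)$. For existence at $\lambda_{\alpha\beta}$ I would take $(\lambda_n,u_n)\in\mathcal{S}_{\alpha\beta}$ with $\lambda_n\searrow\lambda_{\alpha\beta}$, extract a $C^1$-convergent subsequence $u_n\to u_*$ via $W^{2,p}$-compactness, and use the refined non-existence statement \eqref{equ:noexis}---valid under \ref{eq:hip_f_creciente_3} at both $\alpha$ and $\beta$---to exclude $\|u_*\|_\infty\in\{\alpha,\beta\}$. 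This places $(\lambda_{\alpha\beta},u_*)$ in $\mathcal{S}_{\alpha\beta}$. For each $\lambda>\lambda_{\alpha\beta}$ the monotone sub/super-solution method then gives a solution in $\mathcal{S}_{\alpha\beta}$: $u_*$ is a strict subsolution at this larger $\lambda$ because $-\Delta u_*=\lambda_{\alpha\beta} f(u_*)\le\lambda f(u_*)$, the constant $\beta$ is a supersolution since $f(\beta)=0$, and the resulting $v$ satisfies $u_*\le v\le\beta$, so $\|v\|_\infty>\alpha$ and, by \eqref{equ:noexis} once more, $\|v\|_\infty<\beta$.

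For the continuum under the stronger condition \ref{eq:hip_f_creciente} at $\sigma=\beta$, I would reformulate the problem as the fixed-point equation $u=T_\lambda(u):=\lambda(-\Delta)^{-1}\tilde f(u)$ in $C_0(\overline\Omega)$, where $\tilde f(s)=f(\min(s,\beta))$ for $s\ge 0$; the maximum principle forces any nontrivial fixed point to satisfy $0\le u\le\beta$, and fixed points with $\alpha<\|u\|_\infty<\beta$ are precisely the elements of $\mathcal{S}_{\alpha\beta}$. Letting $\Sigma$ be the connected component of $(\lambda_{\alpha\beta},u_*)$ in $\mathcal{S}_{\alpha\beta}$, the a priori bound $\|u\|_\infty<\beta$ together with $W^{2,p}$-compactness prevents blow-up in $u$ on bounded $\lambda$-intervals; a Whyburn-type lemma in the spirit of \cite{Rab} then forces $\Sigma$ to be unbounded, and since it cannot escape in the $u$-direction it must be unbounded in $\lambda$. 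Combined with the minimality of $\lambda_{\alpha\beta}$, this yields $\mathrm{Proj}_\lambda(\Sigma)=[\lambda_{\alpha\beta},+\infty)$ and proves (i).

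The main difficulty is the $\subset$-shape in (ii). My plan is to construct, by monotone iteration starting from $0$ (well-defined because \ref{eq:hip_f_creciente} holds on all of $(0,\beta)$), a continuous minimal-solution curve $\lambda\mapsto\underline u_\lambda\in\mathcal{S}_{\alpha\beta}$ for $\lambda\ge\lambda_{\alpha\beta}$ that is nondecreasing in $\lambda$ and converges to $u_*$ as $\lambda\searrow\lambda_{\alpha\beta}$; this provides one branch of $\Sigma$. The second branch is the hard one, because a monotone iteration started at the supersolution $\beta$ need not descend below $u_*$. To extract it from $\Sigma$ itself, I would compute the Leray--Schauder index of $u_*$ as a fixed point of $T_{\lambda_{\alpha\beta}}$ and argue that it must vanish, because $\lambda_{\alpha\beta}$ is a folding value: no solutions exist on the left, while at least one exists on the right. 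The Rabinowitz global alternative then forbids $\Sigma\setminus\{(\lambda_{\alpha\beta},u_*)\}$ from being a single graph over $\lambda$ near its leftmost point, producing a second unbounded subcontinuum; a comparison with the minimal curve finally shows the two pieces stay distinct for all $\lambda>\lambda_{\alpha\beta}$. Making this index computation rigorous and ensuring the two branches never merge constitute the main technical obstacle.
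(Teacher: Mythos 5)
Your treatment of the lower bound $\lambda_{\alpha\beta}>0$, of the compactness/closedness argument, and of the propagation of existence to every $\lambda>\lambda_{\alpha\beta}$ by sub- and supersolutions is correct and matches the corresponding steps of Lemma~\ref{lema:existencia}. There are, however, two genuine gaps.

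First, you never prove that $\mathcal{S}_{\alpha\beta}\neq\emptyset$: your argument begins by ``taking $(\lambda_n,u_n)\in\mathcal{S}_{\alpha\beta}$ with $\lambda_n\searrow\lambda_{\alpha\beta}$'', which presupposes that the set whose infimum defines $\lambda_{\alpha\beta}$ is nonempty. This is not free. One must produce, for some $\lambda$, a solution whose $L^\infty$-norm actually exceeds $\alpha$, and sub/supersolution arguments alone cannot do this because the natural subsolutions (zero, small constants, small radial bumps) live below $\alpha$. The paper devotes the first two steps of Lemma~\ref{lema:existencia} to exactly this point: it minimizes the energy functional of the problem with $f$ truncated to vanish outside $[\alpha,\beta]$, shows the minimum is negative (hence attained at a nontrivial function) for $\lambda$ large, invokes Lemma~\ref{lema:no_existencia} to place the norm of the minimizer strictly inside $(\alpha,\beta)$, and only then feeds it into the sub/supersolution method. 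Without this step the assertion ``for every $\lambda\geq\lambda_{\alpha\beta}$ there exists $(\lambda,u)\in\mathcal{S}_{\alpha\beta}$'' is empty.

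Second, the construction of the two branches does not go through as described. The ``minimal-solution curve obtained by monotone iteration starting from $0$'' will not in general land in $\mathcal{S}_{\alpha\beta}$: if $f(0)=0$ the iteration is stationary at $0$, and if $f(0)>0$ it converges to the minimal solution of the full problem, whose norm may be below $\alpha$. (The paper iterates \emph{downward} from the constant $\beta$ --- this is precisely where \ref{eq:hip_f_creciente} for $\sigma=\beta$ enters --- and anchors $\Sigma$ at the \emph{maximal} solution of $(P_{\lambda_{\alpha\beta}})$ in the slab.) Likewise, the claim that the Leray--Schauder index of $u_*$ ``must vanish'' at a fold, and that ``the Rabinowitz global alternative'' then produces a second unbounded subcontinuum, is not an argument: Rabinowitz's alternative concerns bifurcation from the trivial branch and gives nothing here, and a vanishing index by itself produces no second solution. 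The mechanism actually used is concrete degree bookkeeping: for $\lambda\in(\lambda_{\alpha\beta},\overline\lambda\,]$ one shows $\deg(I-K(\lambda,\cdot),O_{\alpha\beta},0)=0$ by homotoping $\lambda$ down to some $\lambda_1<\lambda_{\alpha\beta}$ where no solutions exist, and $\deg=1$ on the order interval $O_1$ bounded below by that maximal solution and above by an explicit supersolution, via the monotone truncation and convexity homotopy of \cite{Lazer-McKenna,A-C-2}; excision then gives degree $-1$ outside $\overline{O_1\cap B_r(0)}$, hence a second solution at every such $\lambda$, and the connectivity lemma of \cite[Lemma~8]{A-C-2} assembles a single continuum through both families which can only leave the slab at $\lambda=\lambda_{\alpha\beta}$. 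Supplying this degree computation (or an equivalent one) is the missing core of your part (ii).
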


We observe that this theorem can be applied to every pair of consecutive positive zeros of $f$.
 In the case where the function $f$ has a monotone sequence of positive zeros $\alpha_n \to \alpha$, a natural question that arises is: what is the behavior of the sequence $\{\lambda_n\}$ given by \eqref{equ:lambda:n}?
 
 We show that when $0<\alpha<\infty$, then $\{\lambda_n\}$ always diverges (see Proposition~\ref{prop:acum_pos}). When $\alpha=0$ or $\alpha=+\infty$, then $\{\lambda_n\}$ can behave in several ways, some of them implying the existence of infinitely many bifurcation points (either from zero if $\alpha=0$ or from infinity if $\alpha=+\infty$) that are not branching points.

To illustrate these latter situations, we study in depth two wide families of different problems in which the positive zeros behave very differently. More precisely, we consider the sequences of zeros of the functions $(1+\sin u)$ or $\left(1+\sin \frac1 u\right)$ on the basis of the positive functions $u^rg(u)$ with 
\begin{equation}
\label{hipg}
0< g\in \mathcal{C}^1(\mathbb{R}) \mbox{ a bounded function such that } \inf_{s\geq 0} g(s) >0.
\end{equation}
Thus, due to the hypothesis \eqref{hipg} on $g$, the superlinear or sublinear profile of $f$ is determined by the profile of $u^r$ and the function $g$ may have an oscillatory behavior at infinity or at zero which  plays an important role in the case $r=1$.

The first family of problems is
\begin{equation}
    \tag{$Q_\lambda$}
    \label{eq:Problema_seno_u}
    \begin{cases}
   -\Delta u = \lambda u^r(1+\sin u)g(u) &\text{ in } \Omega,\\
   u=0 &\text{ on } \partial\Omega,
    \end{cases}
\end{equation}
with $r\geq 0$.

 Here, the sequence of positive zeros of $f$ diverges. In this case, we have the following existence and multiplicity result depending on $r$, which is a consequence of the behavior of the sequence $\{\lambda_n\}$ mentioned above.

\begin{theorem}
\label{teor:seno_u}
Assume that \eqref{hipg} holds true and let $n(\lambda)$ denote the number of solutions of~\eqref{eq:Problema_seno_u} for $\lambda>0$ fixed, then:
    \begin{enumerate}[i)]
        \item If $0\leq r<1$, then for every $m\in\na$ there exists some $\tilde{\lambda}_m>0$ such that $n(\lambda)\geq m$ for every $\lambda>\tilde{\lambda}_m$. In addition, no $\lambda>0$ is a bifurcation point from infinity.
        \item If $r=1$, then there exists some $\tilde{\lambda}_\infty>0$ such that $n(\lambda) = \infty$ for every $\lambda>\tilde{\lambda}_\infty$. Moreover, every $\lambda\geq \tilde{\lambda}_\infty$ is a bifurcation point from infinity but not a branching point from infinity. In addition, every $\lambda<\tilde \lambda_\infty$ is not a bifurcation point from infinity.
        \item If $r>1$, then $n(\lambda)=\infty$ for every $\lambda>0$. Furthermore, every $\lambda>0$ is a bifurcation point from infinity but not a branching point from infinity.
    \end{enumerate}
\end{theorem}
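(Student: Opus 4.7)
The plan is to apply Theorem~\ref{teor:continuo} to each pair of consecutive positive zeros of $f(u)=u^r(1+\sin u)g(u)$ and then reduce the three parts of the theorem to the asymptotic behavior of the sequence $\{\lambda_n\}$ of \eqref{equ:lambda:n}. The positive zeros of $f$ are $\alpha_n=\tfrac{3\pi}{2}+2(n-1)\pi$, $n\geq 1$. Since $f\in C^1(\re)$ with $f'(\alpha_n)=0$, the conditions \ref{eq:hip_f_creciente_3} and \ref{eq:hip_f_creciente} at $\sigma=\alpha_n,\alpha_{n+1}$ hold with any $M\geq \sup_{[0,\alpha_{n+1}]}|f'|$, so Theorem~\ref{teor:continuo} supplies for every $n\geq 1$ an unbounded $\subset$-shaped continuum $\Sigma_n\subset\mathcal{S}_{\alpha_n\alpha_{n+1}}$ with $\mathrm{Proj}_\lambda\Sigma_n=[\lambda_n,+\infty)$, contributing two distinct solutions for every $\lambda>\lambda_n$. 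Two structural remarks then do most of the work: first, $n(\lambda)\geq 2\,\#\{n:\lambda_n<\lambda\}$; second, any sequence of solutions $(\lambda_k,u_k)$ with $\|u_k\|_{L^{\infty}(\Omega)}\to\infty$ lies in strips $\mathcal{S}_{m_k}$ with $m_k\to\infty$, so $\lambda_k\geq\lambda_{m_k}$, and the solutions come from pairwise disjoint continua $\Sigma_{m_k}$. In particular, no single continuum of $\mathcal{S}$ accumulates on $\{\lambda^*\}\times\{\infty\}$, which precludes branching whenever bifurcation from infinity does occur.

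For i), I compare with the torsion function $e\in H^1_0(\Omega)$ (the solution of $-\Delta e=1$): from $-\Delta u\leq 2\lambda\|g\|_{\infty}\|u\|_{L^{\infty}(\Omega)}^r$ and the maximum principle, $u\leq 2\lambda\|g\|_{\infty}\|u\|_{L^{\infty}(\Omega)}^r\,e$, whence $\|u\|_{L^{\infty}(\Omega)}^{1-r}\leq 2\lambda\|g\|_{\infty}\|e\|_{L^{\infty}(\Omega)}$. For $(\lambda,u)\in\mathcal{S}_n$ this forces $\lambda\geq c\,\alpha_n^{1-r}\to+\infty$, i.e.\ $\lambda_n\to+\infty$; setting $\tilde\lambda_m=\max_{k\leq m}\lambda_k$ gives $n(\lambda)\geq 2m$ for $\lambda>\tilde\lambda_m$, and the same a priori bound prevents any blow-up at bounded $\lambda$, ruling out bifurcation from infinity.

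For ii), testing against the first Dirichlet eigenfunction $\varphi_1$ and using $f(u)\leq 2\|g\|_{\infty}u$ gives $\lambda\geq\tilde\lambda_\infty$ for every nontrivial positive solution, where $\tilde\lambda_\infty$ is an explicit threshold of the form $\lambda_1^\Omega/(2\|g\|_\infty)$ (sharpened via the asymptotics of $g$); thus $\lambda_n\geq\tilde\lambda_\infty$ and no $\lambda<\tilde\lambda_\infty$ can be a bifurcation point from infinity. The matching upper bound $\lambda_n\to\tilde\lambda_\infty$ is obtained by a blow-up analysis: the renormalisation $v_k=u_k/\|u_k\|_{L^{\infty}(\Omega)}$ satisfies $-\Delta v_k=\lambda_k(1+\sin u_k)g(u_k)v_k$ and, by elliptic regularity, converges strongly to some $v\geq 0$ with $\|v\|_{L^{\infty}(\Omega)}=1$; the weak-$*$ averaging of $(1+\sin u_k)$ to one combined with the asymptotic behavior of $g$ produces a limiting linear eigenvalue problem whose only admissible first eigenvalue is $\tilde\lambda_\infty$. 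Together with $\mathrm{Proj}_\lambda\Sigma_n=[\lambda_n,+\infty)$ this yields $\lim\lambda_n=\tilde\lambda_\infty$, and the conclusions of ii) follow from the general reduction.

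For iii), I need $\lambda_n\to 0$, which is the principal obstacle. For fixed $\lambda>0$ and $n$ large, a candidate construction minimises the truncated energy $J^n_\lambda(u)=\tfrac12\into|\nabla u|^2-\lambda\into F_n(u)$ with $F_n(u)=\int_0^{u\wedge\alpha_{n+1}}f(s)\,ds$; this functional is coercive and bounded below on $H^1_0(\Omega)$. Using $F(\alpha_{n+1})\sim c\,\alpha_{n+1}^{r+1}$ (since the mean value of $1+\sin s$ is one) and the test function $\alpha_{n+1}\eta$ for a fixed interior cutoff $\eta$, one obtains $\inf J^n_\lambda\lesssim\alpha_{n+1}^2-\lambda\,\alpha_{n+1}^{r+1}\to-\infty$ as $n\to\infty$ when $r>1$, so the minimiser is nontrivial. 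The delicate step, and the main difficulty, is to verify that its sup-norm exceeds $\alpha_n$: because $\alpha_{n+1}/\alpha_n\to 1$, the naïve comparison of energies in the regimes $\|u\|_{L^{\infty}(\Omega)}\leq\alpha_n$ and $\|u\|_{L^{\infty}(\Omega)}\leq\alpha_{n+1}$ is inconclusive. A way forward is a refined localisation near the interior maximum of $1+\sin u$ inside $(\alpha_n,\alpha_{n+1})$ combined with a Leray--Schauder degree computation in the open strip, exploiting the continuum $\Sigma_n$ from Theorem~\ref{teor:continuo} as a continuation device from large $\lambda$ down to arbitrarily small values. Once $\lambda_n\to 0$ is established, the conclusions of iii) follow from the general reduction.
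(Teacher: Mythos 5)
Your part i) is sound and essentially the paper's argument: the a priori bound $\|u\|_{L^\infty(\Omega)}\leq C\lambda\|u\|_{L^\infty(\Omega)}^r$ (you get it from the torsion function, the paper from Stampacchia's theorem applied to $G_k(u_n)$, i.e.\ Proposition~\ref{prop:cota_inferior_seno_u}) forces $\lambda_n\gtrsim \alpha_n^{1-r}\to+\infty$ and simultaneously excludes bifurcation from infinity. Your ``general reduction'' (multiplicity from the strips, and no branching because Lemma~\ref{lema:no_existencia} traps every connected set of solutions inside a single strip $\mathcal{S}_{\alpha_n\alpha_{n+1}}$) also matches Proposition~\ref{prop:bifurcacion}.

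The genuine gap is the upper bound on $\lambda_n$, which is the quantitative heart of parts ii) and iii). For ii), your blow-up argument does not work as stated: after renormalising, the weight $(1+\sin u_k)g(u_k)$ has no reason to converge weak-$*$ to its mean value $2$ times anything explicit — its weak limit depends on the (unknown) distribution of the level sets of $u_k$ — and $g$ is only assumed bounded with positive infimum, so it has no ``asymptotics'' to invoke. Consequently the claim $\lambda_n\to\lambda_1/(2\|g\|_\infty)$ is unjustified (and the theorem does not need convergence of $\lambda_n$, only $\liminf\lambda_n<\infty$, with $\tilde\lambda_\infty$ taken as that $\liminf$ rather than an explicit eigenvalue). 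For iii) you candidly leave the decisive step open: showing the minimiser of the truncated functional actually has norm above $\alpha_n$ for $\lambda\to 0$. The paper closes both gaps with one construction you do not have: the radially symmetric subsolution of Lemma~\ref{lema:radial}, supported on an annulus and built by a one-dimensional shooting argument so that its maximum is \emph{exactly} the prescribed value $\nu_n=\alpha_n+\pi$ (where $1+\sin\nu_n=2$), for a parameter $\overline\lambda_n$ proportional to $\hat f_1(\nu_n)^2$. The technical Lemma~\ref{lema:cota_superior_seno_u} then shows $\hat f_1(\nu_n)\leq C\nu_n^{(1-r)/2}$, whence $\lambda_n\leq\overline\lambda_n\leq C\nu_n^{1-r}$ by sub/supersolutions between $\underline u_n$ and the constant $\alpha_{n+1}$; this gives boundedness of $\lambda_n$ for $r=1$ and $\lambda_n\to 0$ for $r>1$. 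Without an argument of this type (or an equivalent way to produce solutions in the $n$-th strip for $\lambda\sim\alpha_n^{1-r}$), parts ii) and iii) are not proved.
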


\begin{figure}[ht]
\label{fig:seno_u}
\centering
\begin{subfigure}{.33\textwidth}
  \centering
  \includegraphics[scale=0.43]{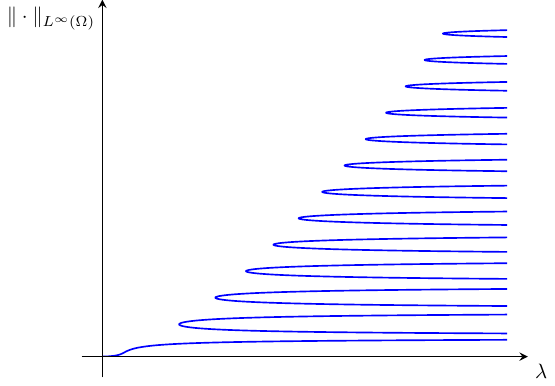}
  \caption{$r<1$}
\end{subfigure}%
\begin{subfigure}{.33\textwidth}
  \centering
  \includegraphics[scale=0.43]{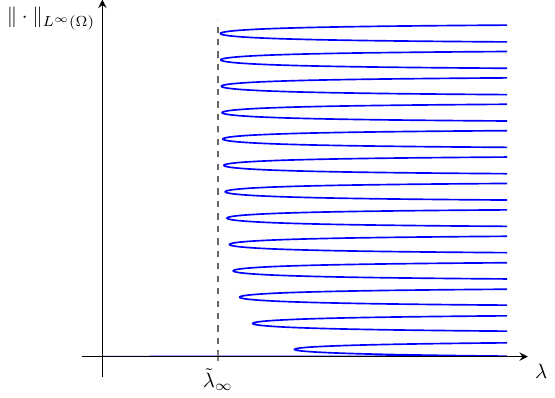}
  \caption{$r=1$}
\end{subfigure}%
\begin{subfigure}{.33\textwidth}
  \centering
  \includegraphics[scale=0.43]{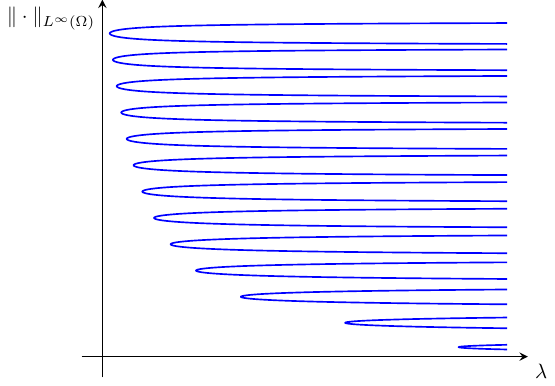}
  \caption{$r>1$}
\end{subfigure}
\caption{Sketch of the continua diagrams of problem~\eqref{eq:Problema_seno_u} depending on the value of $r$ on the basis of Theorem~\ref{teor:continuo} and Theorem~\ref{teor:seno_u}.
}
\end{figure}
We remark explicitly that, for $r\geq 1$, the function $f(s)=s^r(1+\sin s)$ does not satisfies the conditions of \cite{correa1}.

The second family of problems is
\begin{equation}
    \tag{$R_\lambda$}
    \label{eq:Problema_seno_1/u}
    \begin{cases}
 \dys   -\Delta u = \lambda u^r\left(1+\sin \frac{1}{u}\right) g(u) &\text{ in } \Omega,\\
    u=0 &\text{ on } \partial\Omega,
    \end{cases}
\end{equation}
with $r>0$ and satisfying \eqref{hipg}. Observe that in these problems the sequence of positive zeros of $f$ converges to 0 and, for $r<2$, $f$ does not satisfies \ref{eq:hip_f_creciente} at any of them. We have the following existence and multiplicity result.

\begin{theorem}
\label{teor:seno_1/u}
   Assume that \eqref{hipg} holds true and let  $n(\lambda)$ denote the number of solutions of~\eqref{eq:Problema_seno_1/u} for $\lambda>0$ fixed, then:
    \begin{enumerate}[i)]
        \item If $0< r<1$, then $n(\lambda)=\infty$ for every $\lambda>0$. Furthermore, every $\lambda>0$ is a bifurcation point from zero but not a branching point from zero.
        \item If $r=1$, then there exists some $\tilde{\lambda}_0>0$ such that $n(\lambda) = \infty$ for every $\lambda>\tilde{\lambda}_0$. Moreover, every $\lambda\geq \tilde{\lambda}_0$ is a bifurcation point from zero but not a branching point from zero. In addition, every $\lambda<\tilde \lambda_0$ is not a bifurcation point from zero.
        \item If $r>1$, then for every $m\in\na$ there exists some $\tilde{\lambda}_m>0$ such that $n(\lambda)\geq m$ for every $\lambda>\tilde{\lambda}_m$. In addition, no $\lambda>0$ is a bifurcation point from zero.
    \end{enumerate}
\end{theorem}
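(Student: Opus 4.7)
The plan is to apply Theorem~\ref{teor:continuo} to each pair of consecutive positive zeros of $f(s)=s^r(1+\sin(1/s))g(s)$ and read off Theorem~\ref{teor:seno_1/u} from the asymptotic behavior of the resulting sequence of existence thresholds. The positive zeros of $f$ are exactly $\alpha_n=(3\pi/2+2n\pi)^{-1}$ for $n=0,1,\ldots$, forming a strictly decreasing sequence with $\alpha_n\downarrow 0$.

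To invoke Theorem~\ref{teor:continuo} on each pair $(\alpha_{n+1},\alpha_n)$, I first verify \ref{eq:hip_f_creciente_3} at every $\alpha_n$. Since $\sin(1/\alpha_n)=-1$ and $\cos(1/\alpha_n)=0$, a direct differentiation gives $f'(\alpha_n)=0$, hence $(f(s)+Ms)'|_{s=\alpha_n}=M>0$, and by continuity $f+Ms$ is strictly increasing on some left-neighborhood of $\alpha_n$. Thus Theorem~\ref{teor:continuo} yields, for every $\lambda\geq \lambda_n:=\lambda_{\alpha_{n+1}\alpha_n}$, a solution $(\lambda,u_n)\in\mathcal{S}_n:=\mathcal{S}_{\alpha_{n+1}\alpha_n}$. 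Note that \ref{eq:hip_f_creciente} (the version on the whole interval $(0,\alpha_n)$) fails for $r<2$ because of the oscillations of $\sin(1/s)$, so only the existence half of Theorem~\ref{teor:continuo} is available; this is enough for what follows.

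The heart of the argument is then to show $\lambda_n\asymp \alpha_n^{1-r}$ as $n\to\infty$. For $r\geq 1$ I would exploit that any $u\in\mathcal{S}_n$ satisfies $0\leq u\leq \alpha_n$ and $f(s)/s\leq 2\|g\|_\infty s^{r-1}\leq 2\|g\|_\infty \alpha_n^{r-1}$ on $[0,\alpha_n]$; testing the equation against the first Dirichlet eigenfunction $\phi_1$ yields $\lambda_n\geq \lambda_1(\Omega)/(2\|g\|_\infty \alpha_n^{r-1})$. For the matching upper bound (the relevant direction for $r\leq 1$) I would use a sub/super-solution argument: since $1+\sin(1/s)=2$ at some interior point of each $(\alpha_{n+1},\alpha_n)$, one has $f(s)/s\gtrsim \alpha_n^{r-1}$ there, and a strict subsolution of size close to $\alpha_n$ can be built once $\lambda$ exceeds a multiple of $\alpha_n^{1-r}$, paired with the natural supersolution $\alpha_n$ (after a boundary adjustment). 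Combining these estimates: $\lambda_n\to 0$ if $0<r<1$; $\lambda_n$ stays pinched between two positive constants if $r=1$, in which case $\tilde\lambda_0:=\liminf_n \lambda_n\in(0,\infty)$; and $\lambda_n\to\infty$ if $r>1$.

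The three cases of the theorem then follow. Solutions from different $\mathcal{S}_n$ are automatically distinct, since their $L^\infty$-norms lie in disjoint intervals. In case (i), every $\lambda>0$ exceeds all but finitely many $\lambda_n$, yielding infinitely many solutions whose norms tend to $0$, which turns every $\lambda>0$ into a bifurcation point from zero. In case (ii), the same argument applies for $\lambda>\tilde\lambda_0$, and a small approximation at $\lambda=\tilde\lambda_0$ delivers bifurcation there; for $\lambda<\tilde\lambda_0$, any hypothetical sequence $(\lambda_k,u_k)\to(\lambda,0)$ would need $u_k\in\mathcal{S}_{n_k}$ with $n_k\to\infty$ and $\lambda_k\geq \lambda_{n_k}$, forcing $\lambda\geq\tilde\lambda_0$, a contradiction. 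In case (iii), one sets $\tilde\lambda_m:=\max\{\lambda_n:n\leq m-1\}$ for the multiplicity claim, and $\lambda_n\to\infty$ confines the norms of all solutions at any fixed $\lambda$ to finitely many $\mathcal{S}_n$, excluding bifurcation from zero. The \emph{non-branching} assertion in (i) and (ii) rests on \eqref{equ:noexis}: the no-solution layers $\|u\|_\infty=\alpha_k$ disconnect the $\mathcal{S}_n$ from each other and from $\mathcal{T}$ in $\mathbb{R}\times L^\infty(\Omega)$, so no connected subset of $\mathcal{S}$ can contain $(\lambda,0)$ together with nontrivial solutions from infinitely many $\mathcal{S}_n$. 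The main obstacle I anticipate is the sharp calibration of the upper bound on $\lambda_n$ in case (ii), where the oscillations of $\sin(1/s)$ inside each $(\alpha_{n+1},\alpha_n)$ prevent a purely monotone comparison and complicate the precise identification of $\tilde\lambda_0$.
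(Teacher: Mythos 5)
Your overall architecture coincides with the paper's: introduce the thresholds $\lambda_n$ for each gap between consecutive zeros, show $\lambda_n\asymp\alpha_n^{1-r}$, and then read off the three cases, with the non-branching assertion coming from the impossibility of solutions with $\|u\|_{L^\infty(\Omega)}=\alpha_k$ (Lemma~\ref{lema:no_existencia}), exactly as in Proposition~\ref{prop:bifurcacion}. Your verification of \ref{eq:hip_f_creciente_3} at each $\alpha_n$ is correct, your lower bound via testing with the first eigenfunction is a legitimate (and for $r\ge 1$ slightly cleaner) alternative to the Stampacchia argument of Proposition~\ref{prop:cota_inferior_seno_1/u}, and your treatment of the endpoint $\lambda=\tilde\lambda_0$ and of the disjointness of the sets $\mathcal{S}_n$ is fine.

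The genuine gap is the upper bound $\lambda_n\le C\alpha_n^{1-r}$, which is precisely what makes i) and ii) true and which you only gesture at. The observation that $1+\sin(1/s)=2$ at one interior point $\nu_n$ of the gap, so that $f(s)/s\gtrsim\alpha_n^{r-1}$ \emph{there}, does not produce a subsolution of height $\nu_n$ at parameter $\lambda\sim\alpha_n^{1-r}$: any nonnegative subsolution with maximum $\nu_n$ must climb through every level $s\in(0,\nu_n)$, and $f$ vanishes at the infinitely many zeros $\alpha_k$, $k>n$, accumulating at $0$, so no pointwise comparison of $f(s)/s$ can close the argument. What actually controls the admissible $\lambda$ is the integral quantity $\hat f_1(\nu_n)=\int_0^{\nu_n}\bigl(F_1(\nu_n)-F_1(s)\bigr)^{-1/2}\,\mathrm{d}s$ attached to the radial quadrature construction of Lemma~\ref{lema:radial}, and one must prove $\hat f_1(\nu_n)\le C\nu_n^{(1-r)/2}$, i.e.\ the uniform bound $F_1(\nu_n)-F_1(s)\ge c\,\nu_n^{r}(\nu_n-s)$ for all $s\in[0,\nu_n)$. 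This is the content of Lemma~\ref{lema:cota_superior_seno_1/u}, whose proof needs both the cancellation estimate $\bigl|\int_0^{\nu}t^r\sin(1/t)\,\mathrm{d}t\bigr|\le(\pi+2)\nu^{r+2}$ of Lemma~\ref{lema:bachillerato} for $s$ away from $\nu_n$, and a separate analysis of the interior critical points of the quotient near $\nu_n$, where the oscillation zeros interfere. You explicitly flag this calibration as an unresolved obstacle; since without it one cannot conclude $\lambda_n\to0$ for $0<r<1$ nor the boundedness of $\lambda_n$ for $r=1$, parts i) and ii) remain unproved. Beware also that the seemingly natural variational shortcut --- minimizing the energy with $f$ truncated outside $(\alpha_{n+1},\alpha_n)$ and pairing the minimizer with the supersolution $\alpha_n$ --- only yields $\lambda_n\lesssim\alpha_n^{-r}$, off by a full factor of $\alpha_n$ and hence useless here; the sharp rate genuinely requires exploiting $\int_0^{\nu_n}f_1\sim\nu_n^{r+1}$ over the whole range $[0,\nu_n]$.
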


\begin{figure}[ht]
\label{fig:seno_1/u}
\centering
\begin{subfigure}{.33\textwidth}
  \centering
  \includegraphics[scale=0.43]{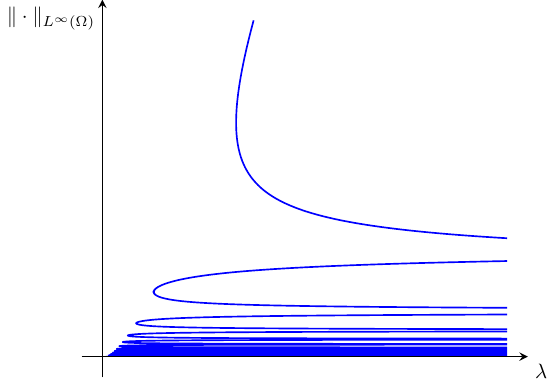}
  \caption{$r<1$}
\end{subfigure}%
\begin{subfigure}{.33\textwidth}
  \centering
  \includegraphics[scale=0.43]{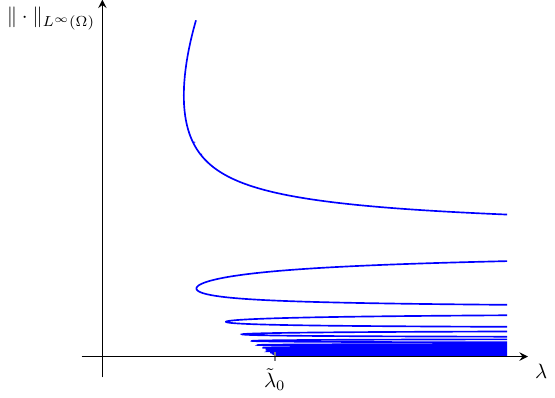}
  \caption{$r=1$}
\end{subfigure}%
\begin{subfigure}{0.33\textwidth}
  \centering
  \includegraphics[scale=0.43]{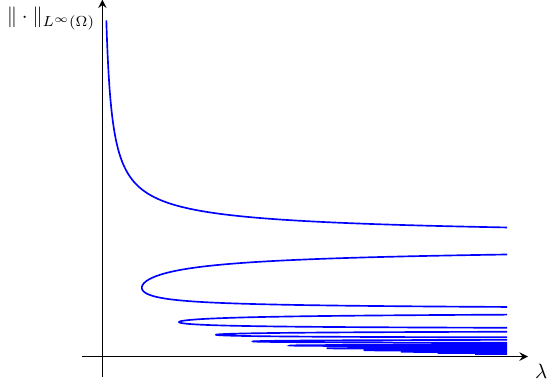}
  \caption{$r>1$}
\end{subfigure}
\caption{Sketch of the continua diagrams of problem~\eqref{eq:Problema_seno_1/u} depending on the value of $r$ on the basis of Theorem~\ref{teor:continuo} and Theorem~\ref{teor:seno_1/u}.}
\end{figure}

The plan of the paper is as follows. In Section~\ref{sec:continuo} we analyze the behavior of the solutions in $\mathcal{S}_{\alpha\beta}$. To do that, we prove several lemmas and then we prove Theorem~\ref{teor:continuo}.  Section~\ref{sec:sucesion_de_ceros} is devoted to the case in which $f$ has a monotone sequence of positive zeros. We analyze the behavior of the sequence $\{\lambda_n\}$ when $n$ goes to infinity to get a complete picture of the solution set $\mathcal{S}$ of problem \eqref{eq:Problema_general}. Moreover, we give some examples considering in Subsection~\ref{sec:seno_u} the problem~\eqref{eq:Problema_seno_u} in which the sequence of positive zeros of $f$ diverges and in Subsection~\ref{sec:seno_1/u} the problem~\eqref{eq:Problema_seno_1/u} in which the sequence of positive zeros of $f$ converges to 0. Finally, in Section~\ref{firstsecond} we detail the behavior of the solutions below the first positive zero or above the last one. We analyze which are all possible cases, which of them are solved in the literature, which are treated by us in the present work, and, to the best of our knowledge, an open problem that is still pending.

\section{Behavior of solutions between two consecutive zeros}
\label{sec:continuo}

We dedicate this section to detail the behavior of the solutions in $\mathcal{S}_{\alpha\beta}$ where $\alpha, \beta$ are two consecutive positive zeros of the nonlinearity $f$. To do that, we prove some interesting properties of the solutions of the problem \eqref{eq:Problema_general}.

%Now, we deal with the case in which the solutions are between two consecutive zeros.

Let us start with a well-known result consequence of the strong maximum principle; see, for example,~\cite{Amb-Hess}.
\begin{lemma}\label{lem:AmbHess}
 Assume that $f$ is a continuous function with $f(0)\geq 0$ and verifying~\ref{eq:hip_f_creciente} for some $\sigma>0$ such that $f(\sigma)\leq 0$. Then, for every $\lambda\geq 0$, there is no positive solution $u\in H_0^1(\Omega)\cap L^\infty(\Omega)$ of~\eqref{eq:Problema_general} such that $\|u\|_{L^\infty(\Omega)} = \sigma$.
\qed
\end{lemma}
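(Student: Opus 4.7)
The plan is to argue by contradiction via the strong maximum principle applied to the function $w=\sigma-u$. Assume a positive solution $u\in H_0^1(\Omega)\cap L^\infty(\Omega)$ of~\eqref{eq:Problema_general} with $\|u\|_{L^\infty(\Omega)}=\sigma$ exists. Since $f$ is continuous and $u$ is bounded, elliptic regularity gives $u\in C(\overline{\Omega})$, so the condition $u=0$ on $\partial\Omega$ combined with $\|u\|_{L^\infty(\Omega)}=\sigma>0$ forces the maximum to be attained at some interior point $x_0\in\Omega$, i.e. $u(x_0)=\sigma$. In particular $w\geq 0$ in $\Omega$ and $w(x_0)=0$.

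Next I would exploit the monotonicity hypothesis~\ref{eq:hip_f_creciente} together with $f(\sigma)\leq 0$ to derive a differential inequality for $w$. Since $f(s)+Ms$ is nondecreasing on $(0,\sigma)$ and $f$ is continuous, for every $s\in[0,\sigma)$ one has
\[
f(s)+Ms\leq f(\sigma)+M\sigma\leq M\sigma.
\]
Applied pointwise to $u(x)\in[0,\sigma]$, this gives $\lambda\bigl(f(u)+Mu\bigr)\leq \lambda M\sigma$. Adding $\lambda M u$ to $-\Delta u=\lambda f(u)$ and subtracting from $-\Delta \sigma+\lambda M\sigma=\lambda M\sigma$ yields
\[
-\Delta w+\lambda M w=\lambda M\sigma-\lambda\bigl(f(u)+Mu\bigr)\geq 0\qquad\text{in }\Omega.
\]

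Now $w$ is a nonnegative supersolution of the uniformly elliptic operator $-\Delta+\lambda M$ that vanishes at the interior point $x_0$. The strong maximum principle then forces $w\equiv 0$ in the connected component of $\Omega$ containing $x_0$, hence $u\equiv\sigma$ there. Since $\Omega$ is connected (as a smooth bounded domain we may, if needed, work component by component), we obtain $u\equiv\sigma$ in $\Omega$, contradicting the Dirichlet boundary condition $u=0$ on $\partial\Omega$.

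The only delicate point is justifying the application of the strong maximum principle in the $H_0^1\cap L^\infty$ framework, but this is standard: one may either first upgrade $u$ to $C^{1,\alpha}(\overline{\Omega})$ via $L^p$-regularity applied to $\lambda f(u)\in L^\infty(\Omega)$ and then invoke the classical Hopf-type strong maximum principle, or work directly with the weak version of the strong maximum principle for nonnegative supersolutions of $-\Delta+\lambda M$ in $H^1(\Omega)$. Either way, the key analytic ingredient is the pointwise inequality $f(u)+Mu\leq M\sigma$, which is precisely where the monotonicity on $(0,\sigma)$ and the sign condition $f(\sigma)\leq 0$ enter.
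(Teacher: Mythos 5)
Your proof is correct and follows exactly the route the paper has in mind: the paper states this lemma without proof (it is marked \qed and attributed to the strong maximum principle, citing Ambrosetti--Hess), and the standard argument there is precisely yours, namely applying the strong maximum principle to $w=\sigma-u$ after using the monotonicity of $f(s)+Ms$ on $(0,\sigma)$ together with $f(\sigma)\leq 0$ to get $-\Delta w+\lambda Mw\geq 0$. The only point worth keeping explicit, which you already address, is that the monotonicity is assumed only on the open interval, so the inequality $f(s)+Ms\leq f(\sigma)+M\sigma$ requires the continuity of $f$ to pass to the endpoint.
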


\begin{remark}
In the next lemma, we highlight that the conclusion of Lemma~\ref{lem:AmbHess} is true when we replace condition~\ref{eq:hip_f_creciente} by the less restrictive condition~\ref{eq:hip_f_creciente_3}
%. Namely, we assume only that there exist $M > 0$ and $\varepsilon>0$ depending on $\alpha$ such that 
%\begin{equation}
%%    \label{eq:hip_f_creciente_3}
%    f(s)+M s \text{ is increasing for } s\in(\alpha-\varepsilon,\alpha).
%\end{equation}
which depends only on the behavior of $f$ in a left neighbourhood of $\sigma$. 
\end{remark}

\begin{lemma}
\label{lema:no_existencia}
Assume that $f$ is a continuous function with $f(0)\geq 0$ and verifying~\ref{eq:hip_f_creciente_3} for some $\sigma>0$ such that $f(\sigma)\leq 0$. Then, for every $\lambda\geq 0$, there is no positive solution $u\in H_0^1(\Omega)\cap L^\infty(\Omega)$ of~\eqref{eq:Problema_general} such that $\|u\|_{L^\infty(\Omega)} = \sigma$.
 \end{lemma}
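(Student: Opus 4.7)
The plan is to argue by contradiction, adapting the strong maximum principle argument used in Lemma~\ref{lem:AmbHess} so that only the local monotonicity \ref{eq:hip_f_creciente_3} on $(\sigma-\varepsilon,\sigma)$ is needed.

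Suppose there is a positive solution $u\in H_0^1(\Omega)\cap L^\infty(\Omega)$ with $\|u\|_{L^\infty(\Omega)}=\sigma$. Since $f$ is continuous and $u$ is bounded, standard elliptic regularity makes $u$ continuous on $\overline{\Omega}$; because $u=0$ on $\partial\Omega$, there exists $x_0\in\Omega$ with $u(x_0)=\sigma$. Define the open set
\[
V=\{x\in\Omega:u(x)>\sigma-\varepsilon\},
\]
and let $V_0$ be the connected component of $V$ containing $x_0$. Continuity of $u$ and $u|_{\partial\Omega}=0$ force $\overline{V_0}\subset\Omega$ and $u\equiv\sigma-\varepsilon$ on $\partial V_0$, while on $V_0$ one has $\sigma-\varepsilon<u\le\sigma$.

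The next step is to translate the one-sided monotonicity into a pointwise bound on $f$. Since $f(\cdot)+M(\cdot)$ is increasing on $(\sigma-\varepsilon,\sigma)$ and $f$ is continuous at $\sigma$ with $f(\sigma)\le 0$, letting the argument tend to $\sigma^-$ yields
\[
f(s)+Ms\le f(\sigma)+M\sigma\le M\sigma\qquad\text{for all }s\in(\sigma-\varepsilon,\sigma],
\]
so $f(s)\le M(\sigma-s)$ on that interval. Set $v=\sigma-u$, so that $v\ge 0$ on $V_0$ and $v(x_0)=0$. Computing directly in the weak sense,
\[
-\Delta v+\lambda M v=-\lambda f(u)+\lambda M(\sigma-u)\ge 0\qquad\text{in }V_0,
\]
since $\lambda\ge 0$ and the inequality $f(u)\le M(\sigma-u)$ holds pointwise on $V_0$.

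Now apply the strong maximum principle (valid because $\lambda M\ge 0$ and $v$ is continuous on the bounded open connected set $V_0$) to conclude that the nonnegative supersolution $v$ attaining an interior zero at $x_0$ must be identically zero on $V_0$. This gives $u\equiv\sigma$ on $V_0$, contradicting $u\equiv\sigma-\varepsilon$ on $\partial V_0$. The only delicate point is the derivation of the inequality $f(s)\le M(\sigma-s)$ on the one-sided neighborhood, which crucially uses the continuity of $f$ at $\sigma$ together with the local monotonicity~\ref{eq:hip_f_creciente_3}; everything else is a standard localization of the argument already present in Lemma~\ref{lem:AmbHess}.
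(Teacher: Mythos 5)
Your proof is correct, but it takes a genuinely different route from the paper's. The paper does not localize the maximum principle: it constructs an explicit majorant $\tilde f\ge f$ on $[0,\sigma]$ that coincides with $f$ near $\sigma$ and is affine below, arranged so that $\tilde f(s)+\tilde M s$ is increasing on all of $(0,\sigma)$; a solution $u$ with $\|u\|_{L^\infty(\Omega)}=\sigma$ is then a subsolution of the modified problem, the constant $\sigma$ is a supersolution, and the sub-/supersolution method produces a solution of the modified problem with norm exactly $\sigma$, contradicting the global Lemma~\ref{lem:AmbHess}. You instead run the strong maximum principle directly on the superlevel set $V_0=\{u>\sigma-\varepsilon\}$, where the one-sided monotonicity plus continuity of $f$ at $\sigma$ gives the pointwise bound $f(u)\le M(\sigma-u)$; this amounts to a localized re-proof of Lemma~\ref{lem:AmbHess} and avoids both the construction of $\tilde f$ and the sub-/supersolution machinery, at the price of needing the regularity of $u$ up to $\overline{\Omega}$ and a little care with the level set. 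Two points you should make explicit: first, assume without loss of generality that $\varepsilon<\sigma$ (shrinking $\varepsilon$ preserves \ref{eq:hip_f_creciente_3}), since $\sigma-\varepsilon>0$ is what forces $\overline{V_0}\subset\Omega$, hence $\partial V_0\neq\emptyset$ and $u\equiv\sigma-\varepsilon$ there; second, since $V_0$ need not be smooth and $v=\sigma-u$ is only a weak supersolution of $-\Delta+\lambda M$ with nonnegative zero-order coefficient, the version of the strong maximum principle to cite is the one for weak supersolutions (interior statement, no boundary regularity required). With these remarks added, the argument is complete.
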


\begin{remark}
The importance of Lemma~\ref{lema:no_existencia} lies in the variety of nonnegative functions satisfying~\ref{eq:hip_f_creciente_3} in a neighborhood of any of their positive zeros and not satisfying~\ref{eq:hip_f_creciente}. Some examples are given by $f(t)= t^r\left(1+ \sin\frac{1}{t}\right)$ when $0<r<2$. 
\end{remark}
 
 \begin{proof}
 By virtue of~\ref{eq:hip_f_creciente_3}, for some $M>0$ and $\varepsilon>0$ the function $f(s)+Ms$ is increasing in $[\sigma-\varepsilon,\sigma]$. Let us denote $\gamma=\displaystyle\max_{t\in[0,\sigma-\frac\varepsilon2]}\{f(t)+Mt\}\geq 0$ and define $\tilde{f}\in \mathcal{C}([0,\sigma])$ by
\begin{equation*}
\tilde{f}(s)=
\begin{cases}
\displaystyle  \gamma - Ms, & s\in[0,\sigma-\varepsilon],
 \\
\dys\gamma +\frac 2{\varepsilon}(f(\sigma-\frac\varepsilon2)+M(\sigma-\frac\varepsilon2)-\gamma)\left(s-\sigma+\varepsilon\right)-Ms, & s\in (\sigma-\varepsilon,\sigma-\frac\varepsilon2),
 \\
 f(s), & s>\sigma-\frac\varepsilon2.
\end{cases}
\end{equation*}
Thus, since $\gamma\geq f(\sigma-\frac\varepsilon2)+M(\sigma-\frac\varepsilon2)$ and $f(s)+Ms$ is increasing for $s\in (\sigma-\varepsilon,\sigma-\frac\varepsilon2)$, we deduce that $f(s)\leq \tilde f(s)$ for every $s\leq \sigma$ and $\tilde f$ satisfies
\[
\tilde f(s)+ \tilde M s \text{ is increasing for } s\in(0,\sigma),
\]
with $\tilde M = M + \frac2\varepsilon(\gamma - f(\sigma-\frac\varepsilon2)-M(\sigma-\frac\varepsilon2))$. % Calculo de f(s) \leq \tilde{f}(s):
% \gamma +\frac 2{\varepsilon}(f(\alpha-\frac\varepsilon2)+M(\alpha-\frac\varepsilon2)-\gamma)(s-\alpha+\varepsilon)-Ms
% \geq \gamma +\frac 2{\varepsilon}(f(\alpha-\frac\varepsilon2)+M(\alpha-\frac\varepsilon2)-\gamma)(\alpha-\frac{\varepsilon}{2}-\alpha+\varepsilon)-Ms
% = f(\alpha-\frac\varepsilon2)+M(\alpha-\frac\varepsilon2)-Ms
% \geq f(s) + Ms - Ms = f(s)

%{\color{red} Otra posible $\tilde f$, dejar la que m\'as apetezca...
%
%Let us denote $\gamma=\displaystyle\max_{t\in[0,\sigma]}f(t)\geq 0$, $r(s)=\frac{\gamma+1}{\varepsilon} (\sigma-s)$ and $s_0=\min\{s\in [\sigma-\varepsilon,\sigma]: r(s)=f(s)\}$ and define $\tilde{f}\in \mathcal{C}([0,\sigma])$ by
%\begin{equation*}
%\tilde{f}(s)=
%\begin{cases}
%\displaystyle  \gamma + 1, & s\in[0,\sigma-\varepsilon],
% \\
%r(s), & s\in (\sigma-\varepsilon,s_0),
% \\
% f(s), & s\geq s_0.
%\end{cases}
%\end{equation*}
%Thus, we deduce that $f(s)\leq \tilde f(s)$ for every $s\leq \sigma$ and $\tilde f$ satisfies, with $\tilde M = \max\{M, \frac{\gamma+1}{\varepsilon} \}$, that
%\[
%\tilde f(s)+ \tilde M s \text{ is increasing for } s\in(0,\sigma).
%\]
%}

We denote by $(\tilde{P}_\lambda)$ the problem~\eqref{eq:Problema_general} with $f$ replaced by $\tilde{f}$. Assume on the contrary that $u$ is a solution of~\eqref{eq:Problema_general} with $\|u\|_{L^\infty(\Omega)} = \sigma$. Clearly, $u$ is a subsolution of $(\tilde{P}_\lambda)$ and the constant function $\sigma$ is a supersolution of $(\tilde{P}_\lambda)$. By the sub and supersolution method (see~\cite{Clement-Sweers-2}), there exists a solution $\tilde{u}$ of $(\tilde{P}_\lambda)$ satisfying that  $u\leq \tilde{u} \leq \sigma$ in $\Omega$. In that case $\|\tilde{u}\|_{L^\infty(\Omega)} = \sigma$ which is a contradiction with Lemma~\ref{lem:AmbHess} and we conclude the proof.
 \end{proof}

In the next lemma we prove that the set of values of the parameter $\lambda$ for which \eqref{eq:Problema_general} admits a solution with $L^\infty(\Omega)$ norm between two consecutive nontrivial zeros of $f$ is an unbounded closed interval which does not contain the value $\lambda=0$.

Although the result is essentially known replacing Lemma~\ref{lem:AmbHess} by Lemma~\ref{lema:no_existencia}, we perform the proof for the convenience of the reader.
 
\begin{lemma}
\label{lema:existencia}
Assume that $f$ is a nonnegative continuous function with two consecutive zeros $0<\alpha<\beta$ verifying~\ref{eq:hip_f_creciente_3} for $\sigma=\alpha$ and $\sigma=\beta$. Then the set
\[
\Lambda_{\alpha\beta} = \{\lambda\geq 0: \textrm{\eqref{eq:Problema_general} admits solution $u$ with } \alpha < \|u\|_{L^\infty(\Omega)}<\beta \}
\]
is a nonempty closed set of the form $\Lambda_{\alpha\beta}=[\lambda_{\alpha\beta},+\infty)$ for some $\lambda_{\alpha\beta}>0$.
\end{lemma}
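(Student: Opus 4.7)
Setting $\lambda_{\alpha\beta}:=\inf\Lambda_{\alpha\beta}$, the statement decomposes into four independent claims: (a) $\Lambda_{\alpha\beta}\ne\emptyset$; (b) $\inf\Lambda_{\alpha\beta}>0$; (c) $\Lambda_{\alpha\beta}$ is closed in $[0,\infty)$; (d) if $\lambda_0\in\Lambda_{\alpha\beta}$ and $\lambda\ge\lambda_0$, then $\lambda\in\Lambda_{\alpha\beta}$. Together these force $\Lambda_{\alpha\beta}=[\lambda_{\alpha\beta},+\infty)$ with $\lambda_{\alpha\beta}>0$.

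\textbf{Existence for large $\lambda$ (part (a)).} I would truncate $f$ by setting $\tilde f(s)=f(s)$ for $s\in[0,\beta]$ and $\tilde f(s)=0$ for $s>\beta$, with primitive $\tilde F(s)=\int_0^s \tilde f$, and minimize
\[
I_\lambda(u)=\tfrac12\int_\Omega|\nabla u|^2-\lambda\int_\Omega \tilde F(u^+)
\]
over $H^1_0(\Omega)$. Since $\tilde F$ is bounded, $I_\lambda$ is coercive and bounded below, hence attains a minimum at some $u_\lambda$; testing the Euler--Lagrange equation with $u_\lambda^-$ and with $(u_\lambda-\beta)^+$ yields $0\le u_\lambda\le\beta$, so $\tilde f(u_\lambda)=f(u_\lambda)$ pointwise and $u_\lambda$ is a classical solution of \eqref{eq:Problema_general}. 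Comparing $I_\lambda(u_\lambda)\le I_\lambda(v_0)$ with a test function $v_0\in H^1_0(\Omega)$ that equals some $c\in(\alpha,\beta)$ on a set of measure arbitrarily close to $|\Omega|$ forces $\int_\Omega \tilde F(u_\lambda)\to F(\beta)|\Omega|$ as $\lambda\to\infty$, and since $F(\beta)>F(\alpha)$ (because $f>0$ on $(\alpha,\beta)$), this rules out $\|u_\lambda\|_{L^\infty}\le\alpha$ for $\lambda$ sufficiently large. Lemma~\ref{lema:no_existencia} at $\sigma=\beta$ then excludes the boundary value $\|u_\lambda\|_{L^\infty}=\beta$, so $u_\lambda\in \mathcal{S}_{\alpha\beta}$.

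\textbf{Lower bound, monotonicity, closedness.} For (b), any $u\in\mathcal{S}_{\alpha\beta}$ satisfies $\|f(u)\|_{L^\infty}\le M:=\max_{[0,\beta]}f$, and the standard $L^\infty$-bound for $-\Delta u=\lambda f(u)$ with zero boundary data gives $\alpha<\|u\|_{L^\infty}\le C_\Omega\lambda M$, so $\lambda>\alpha/(C_\Omega M)>0$. For (d), given a solution $u_0$ at $\lambda_0$ and $\lambda\ge\lambda_0$, nonnegativity of $f$ makes $u_0$ a subsolution of \eqref{eq:Problema_general} while the constant $\beta$ is a supersolution; the sub/super-solution method produces a solution $u$ with $u_0\le u\le\beta$, hence $\|u\|_{L^\infty}\ge\|u_0\|_{L^\infty}>\alpha$, and Lemma~\ref{lema:no_existencia} at $\sigma=\beta$ excludes $\|u\|_{L^\infty}=\beta$. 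For (c), if $\lambda_n\to\lambda^*$ with $u_n\in\mathcal{S}_{\alpha\beta}$ at $\lambda_n$, the uniform bound $\|u_n\|_{L^\infty}\le\beta$ combined with elliptic regularity extracts a subsequence converging in $C(\overline\Omega)$ to some $u^*\ge0$ solving \eqref{eq:Problema_general} at $\lambda^*$ with $\alpha\le\|u^*\|_{L^\infty}\le\beta$; Lemma~\ref{lema:no_existencia} applied at both $\sigma=\alpha$ and $\sigma=\beta$ rules out the endpoint values, giving $u^*\in\mathcal{S}_{\alpha\beta}$.

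\textbf{Main obstacle.} The delicate point is guaranteeing $\|u_\lambda\|_{L^\infty}>\alpha$ for the variational minimizer, because $f$ may be strictly positive on $(0,\alpha)$ and nontrivial critical points of $I_\lambda$ could a priori concentrate in $\{0\le u<\alpha\}$. The decisive input is the strict inequality $F(\beta)>F(\alpha)$, combined with the energy comparison $I_\lambda(u_\lambda)\le I_\lambda(v_0)$ against a test function taking a value close to $\beta$ on most of $\Omega$, which forces $\int_\Omega \tilde F(u_\lambda)$ above the ceiling $F(\alpha)|\Omega|$ accessible to functions bounded by $\alpha$.
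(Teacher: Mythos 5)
Your proof is correct, and its overall skeleton (truncation plus minimization for existence at large $\lambda$, sub/supersolutions for the half-line structure, compactness plus Lemma~\ref{lema:no_existencia} for closedness) matches the paper's; the one genuinely different ingredient is the existence step. The paper truncates $f$ to vanish on $[0,\alpha)$ as well as on $(\beta,+\infty)$: with that choice any nontrivial critical point of the truncated functional automatically satisfies $\alpha<\|u\|_{L^\infty(\Omega)}<\beta$ (it cannot live below $\alpha$, where the truncated right-hand side vanishes), so one only needs the global minimum to be nontrivial for large $\lambda$; the price is that the minimizer is merely a subsolution of \eqref{eq:Problema_general} and must be upgraded to a solution via the sub/supersolution method. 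You instead keep $f$ on $[0,\alpha]$, so your minimizer solves \eqref{eq:Problema_general} directly, but you must rule out minimizers trapped in $\{\|u\|_{L^\infty(\Omega)}\le\alpha\}$; your energy comparison against a test function near $\beta$, exploiting $F(\beta)>F(\alpha)$, does exactly that and is sound (note that a single fixed $v_0$ with $\int_\Omega\tilde F(v_0)>F(\alpha)|\Omega|$ already suffices — the full limit $\int_\Omega\tilde F(u_\lambda)\to F(\beta)|\Omega|$ is not needed). Your quantitative bound $\lambda\ge\alpha/(C_\Omega\max_{[0,\beta]}f)$ is also a slight strengthening: the paper obtains $\lambda_{\alpha\beta}>0$ only a posteriori, from closedness together with $0\notin\Lambda_{\alpha\beta}$. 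In short, the paper's truncation is slicker and avoids the energy comparison, while yours saves one application of the sub/supersolution method and gives an explicit lower bound for $\lambda_{\alpha\beta}$.
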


\begin{proof}
For the sake of clarity, we have divided the proof into four steps.

{\bf First step}. We show here that there exists some $\tilde{\lambda}$ such that problem~\eqref{eq:Problema_general} admits a subsolution $\underline{u}_\lambda$ with $\|\underline{u}_\lambda\|_{L^\infty(\Omega)} \in(\alpha, \beta)$ for every $\lambda>\tilde{\lambda}$. To this end, we consider the truncated problem
\begin{equation}
\label{eq:Dem_Lema_existencia_1}
\begin{cases}
\displaystyle -\Delta u = \lambda \tilde f(u) & \mbox{in} \; \Omega,\\
u = 0 & \mbox{on} \; \partial \Omega,\\
\end{cases} 
\end{equation}
where $\tilde f$ is given by 
\begin{equation*}
\tilde f(s)=\begin{cases}
0 & \mbox{ if } s<\alpha, \\
f(s) & \mbox{ if }  \alpha\leq s\leq \beta, \\
0 & \mbox{ if } s>\beta.\\
\end{cases} 
\end{equation*}
We observe that every nontrivial solution of~\eqref{eq:Dem_Lema_existencia_1} is a subsolution of \eqref{eq:Problema_general} with $\lio$ norm belonging to $(\alpha, \beta)$ due to Lemma~\ref{lema:no_existencia}. Moreover, 
solutions of~\eqref{eq:Dem_Lema_existencia_1} correspond to critical points of the energy functional
\begin{equation*}
    \tilde{I}_\lambda(u) = \frac{1}{2} \into |\nabla u|^2 - \lambda\into \tilde{F}(u),\ \forall u\in\huz,
\end{equation*}
where $\tilde{F}(s) = \int_0^s \tilde{f}(t)\, \mathrm{d}t$.  Since $\tilde F$ is continuous and bounded, the functional $\tilde I_\lambda$ is coercive and weakly lower semicontinuous for every $\lambda>0$. Therefore, we can deduce that $\tilde I_\lambda$ has a global minimum. Now we prove that $\tilde{I}_\lambda$ does not reach its minimum at the trivial function for $\lambda$ large.
% Demostracion de que I_\lambda es coercivo y wlsc
%Indeed, since $f(s)\geq 0$ for every $s\leq 0$, we have that $\tilde F(s)=0$ for every $s\leq 0$. Moreover $\tilde F(s)=\int_0^\beta \tilde f(t)dt$ for every $s\geq\beta$. Therefore $\tilde F$ is bounded and there exists a positive constant $C$ such that  
%\begin{equation*}
%\tilde I_\lambda(u)=\frac 12 \into |\nabla u|^2-\lambda\into \tilde F (u)\geq \frac12\into |\nabla u|^2-\lambda C,
%\end{equation*}
%for every $u\in H_0^1(\Omega)$.
%
%In addition, $\tilde I_\lambda$ is weak lower semicontinuous. Suppose that $u_n\rightharpoonup u$ in $H_0^1(\Omega)$, then $u_n\rightarrow u$ strongly in $L^p(\Omega)$ for $p<2N/(N-2)$ and $u_n(x)\rightarrow u(x)$ almost everywhere in $\Omega$. Therefore, using that $\tilde F$ is continuous and bounded we have that
%\begin{align*}
%\tilde I_\lambda(u) =  \frac 12 \into |\nabla u|^2-\lambda\into \tilde F (u)\leq 
%\\
%\leq  \liminf_{n\to \infty}  \frac 12 \into |\nabla u_n|^2-\lambda\into \tilde F (u_n)= \liminf_{n\to \infty} \tilde I_\lambda(u_n).
%\end{align*}

In fact, observe that we can take $u_0\in \mathcal{C}^\infty_0(\Omega)$ such that $u_0\equiv \beta$ in $\Omega_0\subset\subset \Omega$ and thus $\int_\Omega \tilde F(u_0)=\int_{\Omega_0} \tilde F(\beta)+\int_{\Omega\setminus\Omega_0} \tilde F(u_0)>0$ since $\int_0^\beta \tilde f(s)ds > 0$ and $\tilde f$ is nonnegative. Therefore, denoting 
\[
\tilde \lambda=\frac{\frac 12 \into |\nabla u_0|^2}{\into \tilde F(u_0)},
\]
for every $\lambda>\tilde \lambda$ we have $\tilde I_\lambda (u_0)<0=I(0)$ and in this way $\tilde I_\lambda$ reach its minimum in a critical point which is nontrivial. As a consequence, for every $\lambda>\tilde{\lambda}$ we have proved the existence of a subsolution $\underline{u}_\lambda$ of~\eqref{eq:Problema_general} with $\|\underline u_\lambda\|_{L^\infty(\Omega)} \in (\alpha, \beta)$.

{\bf Second step}. Here we show that $\Lambda_{\alpha\beta}\ne \emptyset$. Since $\overline u_\lambda = \beta$ is a supersolution of~\eqref{eq:Problema_general} for every $\lambda>0$, we can use the sub and supersolution method (see~\cite{Clement-Sweers-2}) to obtain for every $\lambda>\tilde{\lambda}$ a solution $u_\lambda \in \huz\cap\lio$ of~\eqref{eq:Problema_general} such that
\[
\underline u_\lambda(x)\leq u_\lambda(x) \leq \overline u_\lambda(x) \textrm{ a.e. }x\in \Omega.
\]
We point out that $\|u_\lambda\|_{L^\infty(\Omega)}\in(\alpha,\beta)$ because $\|\underline u_\lambda\|_{L^\infty(\Omega)}\in(\alpha,\beta)$ and $\|\overline{u}_\lambda\|_{L^\infty(\Omega)} = \beta$. Thus, the set $\Lambda_{\alpha\beta}$ is not empty and, in fact, $(\tilde \lambda, +\infty) \subseteq \Lambda_{\alpha\beta}$.

{\bf Third step}. We prove that $\Lambda_{\alpha\beta}$ is closed. 

Let $\lambda_n$ be a convergent sequence in $\Lambda_{\alpha\beta}$ and denote $\lambda=\lim_{n\to \infty} \lambda_n$. Let us take $u_n \in H_0^1(\Omega)\cap L^\infty(\Omega)$ with $\alpha< \|u_n\|_{L^\infty(\Omega)}<\beta$ satisfying 
\begin{equation*}
\begin{cases}
-\Delta u_n  = \lambda_n f(u_n) & \mbox{in} \; \Omega,\\
u_n = 0 & \mbox{on} \; \partial \Omega.\\
\end{cases} 
\end{equation*}
Observe that the sequence $\{\lambda_n f(u_n)\}$ is bounded in $L^\infty(\Omega)$. In addition, applying~\cite[Theorem 6.1]{L-U} we deduce that the sequence $u_n$ is bounded in $\mathcal{C}^{0,\gamma}(\overline\Omega)$. Consequently, Ascoli-Arzel\'{a} Theorem assures that $u_n$ possesses a subsequence converging strongly in $\mathcal{C}(\overline{\Omega})$ to $u\in L^\infty(\Omega)$ with $\alpha\leq  \|u\|_{L^\infty(\Omega)}\leq \beta$.

Moreover, taking $u_n$ as test function and using that $f$ is continuous we get
that $u_n$ is bounded in $H_0^1(\Omega)$. This implies, in particular, that $u\in H_0^1(\Omega)$ and, up to a subsequence, $u_n\rightarrow u$ weakly in $H_0^1(\Omega)$. We can pass to the limit and it follows that 
\begin{equation*}
\begin{cases}
\displaystyle -\Delta u  = \lambda f(u) & \mbox{in} \; \Omega,\\
u = 0 & \mbox{on} \; \partial \Omega.\\
\end{cases} 
\end{equation*}
Since $u$ solves \eqref{eq:Problema_general} the strong maximum principle allows us to ensure that $\alpha\ne \|u\|_{L^\infty(\Omega)}\ne \beta$ (see Lemma \ref{lema:no_existencia}). Therefore, we have proved that $\lambda\in \Lambda_{\alpha\beta}$, i.e. $\Lambda_{\alpha\beta}$ is closed which concludes the third step.

{\bf Fourth step.} We prove that $\Lambda_{\alpha\beta}=[\lambda_{\alpha\beta}, +\infty)$ for some $\lambda_{\alpha\beta}>0$. 

Since $\Lambda_{\alpha\beta}$ is nonempty and closed, we can take $\lambda_{\alpha\beta} = \min \Lambda_{\alpha\beta}$. First, we observe that $\lambda_{\alpha\beta} \neq 0$ because the unique solution to~\eqref{eq:Problema_general} with $\lambda=0$ is the trivial one. Finally, since $f$ is nonnegative, every solution to $(P_{\lambda_{\alpha\beta}})$ is a subsolution to~\eqref{eq:Problema_general} with $\lambda>\lambda_{\alpha\beta}$, so we can argue as in the second step to deduce that $[\lambda_{\alpha\beta}, +\infty) \subseteq \Lambda_{\alpha\beta}$ and thus $\Lambda_{\alpha\beta} = [\lambda_{\alpha\beta}, +\infty)$.
\end{proof}

\begin{remark}
 Let us remark some facts related with the previous result and its proof:
\begin{enumerate}\label{rem:ramarriba+multiplicidad}
 \item Arguing as in the first step with $\tilde f(s)=0$ for $s\in \mathbb{R}\setminus (\beta-\varepsilon,\beta)$ and $0<\tilde f(s)\leq f(s)$  in $(\beta-\varepsilon,\beta)$ we obtain the existence of $\lambda_\varepsilon\to +\infty$, as $\varepsilon\to 0$ and a subsolution $\underline u_\varepsilon$ to $(P_{\lambda_\varepsilon})$ such that $\|\underline u_\varepsilon\|_{L^\infty(\Omega)}\to \beta$ as $\varepsilon\to 0$. Then, as in the second step, we obtain solutions $u_\varepsilon$ with the same property, i.e. $\|u_\varepsilon\|_{L^\infty(\Omega)}\to \beta$ as $\varepsilon\to 0$. 
 
 \item The second step can also be deduced as in~\cite{Hess} using Lemma~\ref{lema:no_existencia} instead of Lemma~\ref{lem:AmbHess}. As a consequence, a multiplicity result in $\Lambda_{\alpha\beta}$ for large $\lambda$ is also deduced. Indeed, the first solution in $\Lambda_{\alpha\beta}$ is found for large $\lambda$ by minimization of a truncated functional which is either not isolated (leading to multiplicity) or has Leray-Schauder index equal to one. In this case, some degree computations lead to the existence of a second solution in $\Lambda_{\alpha\beta}$.
\end{enumerate}
\end{remark}

Now we apply the Leray-Schauder degree to obtain the existence of an unbounded continuum of solutions of \eqref{eq:Problema_general}. We can consider the operator $K\colon
\mathbb{R}\times \mathcal{C}(\overline{\Omega}) \to \mathcal{C}(\overline{\Omega})$ by defining, for every $\lambda\in\mathbb{R}$ and for every $w\in  \mathcal{C}(\overline{\Omega})$, $K(\lambda ,w)=\lambda^+(-\Delta)^{-1}(f(w))$, i.e. the unique solution $u$ in $H^1_0(\Omega)\cap \mathcal{C}(\overline{\Omega})$ of the problem
\[
\begin{cases}
\dys-\Delta u=  \lambda^+ f(w)  & \text{in}\ \Omega,\\[0.7 ex]
u=0 &\text{on}\ \partial\Omega.
\end{cases}
\]
It is well-known that $K$ is well defined. Furthermore, by using the properties of $-\Delta$, we can ensure that $K$ is compact. 

%The next result is related with the existence of continua of solutions to \eqref{eq:Problema_general}.
%\begin{theorem}
% Let $f$ be a nonnegative function and let $0<\alpha<\beta$ be two consecutive zeros of $f$. Assume that~\eqref{eq:hip_f_creciente} holds true in $(0,\beta)$. Then there exists an unbounded continuum $\Sigma \subset S$ with $\subset$-shape such that
%\begin{enumerate}[i)]
%\item $\|u\|_{L^\infty(\Omega)}\in (\alpha,\beta)$ for every $(\lambda, u)\in \Sigma$.
%\item $0< \lambda_{\alpha\beta} = \min\left( \mathrm{Proj}_{\lambda}\Sigma \right)$, where $\lambda_{\alpha\beta}$ is defined in Lemma \ref{lema:existencia}.
%\item For every $\lambda> \lambda_{\alpha\beta}$ there exists at least two solutions $u_1, u_2$ of \eqref{eq:Problema_general}  such that $u_1\gneq u_2$ and 
%$(\lambda,u_1),(\lambda, u_2)\in \Sigma$. 
%\end{enumerate}
%
%\end{theorem}
%
%\begin{proof}
% 
%\end{proof}

In the following, for some $\gamma\in (0,1)$, we denote $O_{\alpha\beta}=\{u\in \mathcal{C}^{1,\gamma}_0(\overline \Omega): \alpha<\|u\|_{L^\infty(\Omega)}<\beta\}$. Now we prove Theorem~\ref{teor:continuo}.

\begin{proof}[{\bf Proof of Theorem~\ref{teor:continuo}}]

The first part of the proof is obtained directly from Lemma~\ref{lema:existencia}. Let us now assume~\ref{eq:hip_f_creciente} for $\sigma=\beta$ to prove the existence of the unbounded continuum $\Sigma \subset \mathcal{S}_{\alpha\beta}$.

Let $v$ be the maximal solution of $(P_{\lambda_{\alpha\beta}})$ in $O_{\alpha\beta}$. This means that if $u$ solves $(P_{\lambda_{\alpha\beta}})$ with $\alpha<\|u\|_{L^\infty(\Omega)} <\beta$, then $u\leq v$. The existence of this maximal solution is due to the fact that thanks to~\ref{eq:hip_f_creciente} we can use an iterative scheme in which we take as a starting point the constant function $\beta$.

Observe that since $f$ is nonnegative, $\underline u = v$ is a subsolution of $(P_\lambda)$ which is not a solution for every $\lambda>\lambda_{\alpha\beta}$. Moreover, given $\overline{\lambda}> \lambda_{\alpha\beta}$, we consider $\overline u$ satisfying 
\begin{equation}
\label{eq:Dem_teor_continuo_1}
\begin{cases}
\displaystyle -\Delta \overline u  + \overline{\lambda} M \overline u = \overline{\lambda} M\beta & \mbox{in} \; \Omega,\\
\overline u = 0 & \mbox{on} \; \partial \Omega.\\
\end{cases} 
\end{equation}
We claim that $\overline u$ is a supersolution of $(P_\lambda)$ which is not a solution for every $\lambda\leq \overline{\lambda}$ and $\underline u\leq \overline u\leq \beta$. Indeed, using $(\overline u-\beta)^+$ as test function in \eqref{eq:Dem_teor_continuo_1} we prove that $\overline u\leq \beta$. Then, using that $\overline{\lambda} M \beta =\overline{\lambda} (f(\beta)+M\beta) \geq \lambda (f(\overline u)+M\overline u)$, we obtain that $\overline u$ is a supersolution of $(P_\lambda)$ which is not a solution for every $\lambda\leq \overline{\lambda}$. Moreover, using that $\overline{\lambda} M \beta = \overline{\lambda} (f(\beta)+M\beta) \geq \lambda (f(\underline u)+M\underline u)$ and the comparison principle, we find that $\underline u\leq \overline u$.

Now we denote 
\[
O_1= \left\{u\in \mathcal{C}^{1,\gamma}_0(\overline \Omega): \underline u<u<\overline u \textrm{ in }\Omega, \frac{\partial \overline u}{\partial n_e}<\frac{\partial u}{\partial n_e}<\frac{\partial \underline u}{\partial n_e}  \textrm{ on }\partial \Omega \right\}.
\]
Given $\lambda_1< \lambda_{\alpha\beta}$ we also define $T\colon \left[\lambda_1,\overline{\lambda} \right] \times \mathcal{C}^{1,\gamma}_0(\overline \Omega)\to \mathcal{C}^{1,\gamma}_0(\overline \Omega)$ given by $T(\lambda,u)=u-K(\lambda,u)$.

The strong maximum principle (Lemma~\ref{lema:no_existencia}) allows us to ensure that $T(\lambda,u)\ne 0$ for every $\lambda\in \left[\lambda_1, \overline{\lambda} \right]$ and $u\in \partial O_{\alpha\beta}$. Moreover, it also implies that $T(\lambda,u)\ne 0$ for every $\lambda\in \left(\lambda_{\alpha\beta},\overline{\lambda} \right]$ and $u\in \partial O_1$. In particular, it is well defined Leray-Schauder topological degree and satisfies
\begin{enumerate}[1)]
\item $\mbox{\rm deg}(T(\lambda,\cdot),O_{\alpha\beta},0)=\mbox{\rm deg}(T(\lambda_1,\cdot),O_{\alpha\beta},0)=0$, for every $\lambda\in [\lambda_1,\overline{\lambda}]$.
\item Arguing as in \cite{Lazer-McKenna} (see also \cite{A-C-2}) we claim now that there exists $r \geq 0$, such that, for every $\lambda\in \left(\lambda_{\alpha\beta},\overline{\lambda} \right]$, one has $\mbox{\rm deg}(T(\lambda,\cdot),{{O}_1}\cap B_r(0),0)=1$. Indeed, we define the truncated function $g$ in
the following way:
\begin{equation*}
 g(x,s)= \left\{
\begin{array}{lcl}
f(\underline u(x))+M\underline u(x) &\mbox{ if }& s\leq \underline u(x), \\
f(s)+M s &\mbox{ if }& \underline u(x)<s< \overline u(x), \\
f(\overline u(x))+M \overline u(x) &\mbox{ if }& s\geq \overline u(x),
\end{array}
\right.
\end{equation*}
where $M>0$ is given by~\ref{eq:hip_f_creciente}. Observe that $g(x,s)$ is bounded and non-decreasing in the $s$ variable. Consider now for every $h\in H^{-1}(\Omega)$ the unique solution $u=T_\lambda h$ of the problem
\begin{equation*}
\left\{
\begin{array}{rclr}
-\Delta u + \lambda M u &
= & h \quad & x\in \Omega,
\\ u & = & 0 & x\in \partial \Omega.
\end{array}
\right.
\end{equation*}
This define an operator $T_\lambda:H^{-1}(\Omega) \to H_0^1(\Omega)$. Since $g(x,s)$ is bounded, by \cite[Theorem~9.15]{Gil-Trud} and the Morrey Theorem, we know that $T_\lambda(\lambda g(x,u))$ is bounded in $\mathcal{C}^1(\overline \Omega)$. Let
\[
r_0=\sup\{\|T_\lambda(\lambda g(x,v))\|_{\mathcal{C}^1(\overline \Omega)}: \,
v\in \mathcal{C}^1(\overline \Omega)\}
\]
and choose $r>\max\{\beta,r_0\}$. By the definition of $g$ we have that $g(x, \underline u(x)) \leq g(x,v(x))\leq g(x, \overline u(x))$ for every $v \in \mathcal{C}^1_0(\overline\Omega)$ and consequently, by using the strong maximum principle, we obtain that
\[
\{T_\lambda(\lambda g(x,v(x))): v\in  \mathcal{C}^1_0(\overline\Omega)\} \subset
{O_1}\cap B_r(0).
\]

Now, pick $\psi \in {O_1}\cap B_r(0)$ and consider the compact homotopy, $H(s,u)=s T_\lambda(\lambda g(x,u))+(1-s)\psi$, $0\leq s\leq 1$. Since ${O}_1\cap B_r(0)$ is a convex set 
%and the equation $u=H(1,u)$ has no solution on $\partial {O_1}\cap B_r(0)$ 
we have that $u \ne H(s,u)$ for all $u\in \partial {{O}_1}\cap B_r(0)$ and $s \in [0,1]$. Therefore
\[
\mbox{\rm deg}(I-T_\lambda(\lambda g(x,\cdot)),{O_1}\cap B_r(0),0)=\mbox{\rm deg}(I-\psi,{O_1}\cap B_r(0),0)=1.
\]

Noting now that $g(x,v(x))=f(v(x))+Mv(x)$ for every $v \in \overline{{O}_1\cap B_r(0)}$, we yield to
\[
\mbox{\rm deg}(T(\lambda,\cdot),{{O}_1}\cap B_r(0),0)=\mbox{\rm deg}(I-T_\lambda(\lambda g(x,\cdot)),{{O}_1}\cap B_r(0),0) =1,
\]
proving the claim. 
\end{enumerate}

By using \cite[Lemma 8]{A-C-2}  (with $a$ and $b$ interchanged) we deduce that there exist a continuum $\Sigma_{\overline{\lambda}}$ of solutions such that $\Sigma_{\overline{\lambda}} \cap (\{ \overline{\lambda} \} \times {O}_1\cap B_r(0))\ne \emptyset$ and $\Sigma_{\overline{\lambda}} \cap \{\overline{\lambda}\}\times (O_{\alpha\beta}\setminus \overline{{O}_1\cap B_r(0)} )\ne \emptyset$.

Moreover, the strong maximum principle assures that there is no solution of the problem on $\partial O_1\cap B_r(0)$ for every $\lambda\in (\lambda_{\alpha\beta},\overline{\lambda}]$. Thus, due to the connection of $\Sigma_{\overline{\lambda}}$ we deduce that $\Sigma_{\overline{\lambda}}$ intersects $\{\lambda\}\times {\partial O_1\cap B_r(0)}$ with $\lambda\in [\lambda_{\alpha\beta},\overline{\lambda}]$ if and only if $\lambda=\lambda_{\alpha\beta}$. This implies that there exists $u\geq \underline{u}$ such that $(\lambda_{\alpha\beta}, u)\in \Sigma_{\overline{\lambda}}\cap \{\lambda_{\alpha\beta} \}\times \partial O_1$. Since $\underline{u}$ is maximal we deduce that $u=\underline{u}$.

It is enough to define $\Sigma =\displaystyle\bigcup_{\overline{\lambda} > \lambda_{\alpha\beta}} \Sigma_{\overline{\lambda}}$. Observe that, since $(\lambda_{\alpha\beta}, \underline{u})\in \Sigma_{\overline{\lambda}}$ for every $\lambda>\lambda_{\alpha\beta}$, the set $\Sigma$ is connected. 
\end{proof}

\section{Behavior of the solutions when the nonlinearity $f$ has a monotone sequence of positive zeros}
\label{sec:sucesion_de_ceros}

In this section we get a full picture of the bifurcation diagram of~\eqref{eq:Problema_general} when the function $f$ has a monotone sequence of positive zeros.  

In order to do that, we assume that $f$ is a nonnegative function having a monotone sequence of positive zeros. We denote this sequence by $\{\alpha_n\}$ and we also suppose that \ref{eq:hip_f_creciente_3} is satisfied for $\sigma=\alpha_n$ for every $n\in\na$.

% there exist $M_n>0$ and $\varepsilon_n>0$ such that
%\begin{equation}
%    \label{eq:hip_f_creciente_2}
%    f(s)+M_n s \text{ is increasing for } s\in(\alpha_n-\varepsilon_n,\alpha_n).
%\end{equation}

We define
\begin{align}
    \label{eq:def_Lambda_n}
    \Lambda_n := & \left\{\lambda\geq 0 : \exists u\in\huz \cap L^\infty(\Omega) \textrm{ solution of~\eqref{eq:Problema_general}}\right.
    \\ \nonumber
  & \left.  \textrm{ with } \min\{\alpha_n, \alpha_{n+1}\} < \|u\|_{L^\infty(\Omega)} < \max\{\alpha_n, \alpha_{n+1}\}\right\}
\end{align}
and
\begin{equation}
    \label{eq:def_lambda_n}
    \lambda_n := \inf \Lambda_n
\end{equation}
for every $n\in\na$. Lemma~\ref{lema:existencia} implies that  $\lambda_n>0$ and $\Lambda_n=[\lambda_n,+\infty)$. 

Our main goal is to study the behavior of the sequence $\{\lambda_n\}$ when $n$ goes to infinity to get a full picture of the bifurcation diagram of~\eqref{eq:Problema_general}. When $\alpha_n\to \alpha$ for some $\alpha>0$ it is easy to prove that $\lambda_n\to +\infty$, as we show in the following result.

\begin{proposition} \label{prop:acum_pos}
    Let $f$ be a nonnegative continuous function having a monotone sequence $\{\alpha_n\}$ of positive zeros converging to some $\alpha\in (0,+\infty)$. Assume that $f$ verifies~\ref{eq:hip_f_creciente_3} for $\sigma=\alpha$ and $\sigma=\alpha_n$ for every $n\in \mathbb{N}$. Then the sequence $\{\lambda_n\}$ defined in~\eqref{eq:def_lambda_n} diverges.
\end{proposition}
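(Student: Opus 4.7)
The plan is to argue by contradiction using a compactness and passage-to-the-limit argument, and then reach a contradiction with the non-existence statement of Lemma~\ref{lema:no_existencia} applied at $\sigma=\alpha$. The monotonicity of $\{\alpha_n\}$ together with $\alpha_n\to \alpha$ forces any solution whose $L^\infty$-norm lies between $\alpha_n$ and $\alpha_{n+1}$ to have norm tending to $\alpha$ as $n\to\infty$, so if the associated parameters $\lambda_n$ stayed bounded we would produce, in the limit, a positive solution with $\|u\|_{L^\infty(\Omega)}=\alpha$, contradicting Lemma~\ref{lema:no_existencia}.

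More precisely, suppose for contradiction that $\{\lambda_n\}$ does not diverge. Then there exist a constant $C>0$ and a subsequence (still denoted $\{\lambda_n\}$) with $\lambda_n\leq C$. Since $\Lambda_n=[\lambda_n,+\infty)$ by Lemma~\ref{lema:existencia}, for each $n$ I can pick a solution $u_n\in H^1_0(\Omega)\cap L^\infty(\Omega)$ of~\eqref{eq:Problema_general} with parameter $\lambda_n$ such that
\[
\min\{\alpha_n,\alpha_{n+1}\}<\|u_n\|_{L^\infty(\Omega)}<\max\{\alpha_n,\alpha_{n+1}\},
\]
so in particular $\|u_n\|_{L^\infty(\Omega)}\to \alpha$ as $n\to\infty$.

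Next I would get compactness. Since $\{u_n\}$ is uniformly bounded in $L^\infty(\Omega)$ (by any constant $>\alpha$) and $f$ is continuous, the right-hand sides $\lambda_n f(u_n)$ are uniformly bounded in $L^\infty(\Omega)$. By \cite[Theorem~6.1]{L-U} applied as in the third step of the proof of Lemma~\ref{lema:existencia}, $\{u_n\}$ is bounded in $\mathcal{C}^{0,\gamma}(\overline\Omega)$ for some $\gamma\in(0,1)$, so Ascoli--Arzelà yields a subsequence converging strongly in $\mathcal{C}(\overline\Omega)$ to some $u\in L^\infty(\Omega)$ with $\|u\|_{L^\infty(\Omega)}=\alpha$. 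Using $u_n$ as a test function gives a uniform $H^1_0(\Omega)$-bound, so (up to a further subsequence) $u_n\rightharpoonup u$ weakly in $H^1_0(\Omega)$. Up to extracting once more, $\lambda_n\to \lambda\in[0,C]$, and passing to the limit in the weak formulation yields $-\Delta u=\lambda f(u)$ in $\Omega$ with $u=0$ on $\partial\Omega$.

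The final step is to apply the non-existence lemma. Because $u\geq 0$ and $\|u\|_{L^\infty(\Omega)}=\alpha>0$, the function $u$ is a positive solution. By continuity of $f$ and the fact that each $\alpha_n$ is a zero with $\alpha_n\to\alpha$, we have $f(\alpha)=0$. Since $f$ satisfies~\ref{eq:hip_f_creciente_3} at $\sigma=\alpha$ by hypothesis, Lemma~\ref{lema:no_existencia} forbids the existence of a positive solution with $\|u\|_{L^\infty(\Omega)}=\alpha$, a contradiction. The main obstacle is just ensuring that the $L^\infty$-norm is preserved in the limit, which is automatic from the uniform convergence on $\overline\Omega$; the rest is a standard elliptic compactness argument.
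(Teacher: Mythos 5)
Your proposal is correct and follows essentially the same route as the paper: contradiction, selection of solutions $u_n$ with $L^\infty$-norm squeezed between $\alpha_n$ and $\alpha_{n+1}$, elliptic compactness (the $\mathcal{C}^{0,\gamma}$ bound from \cite[Theorem~6.1]{L-U} plus Ascoli--Arzel\`a and the $H^1_0$ bound), passage to the limit, and a final contradiction with Lemma~\ref{lema:no_existencia} at $\sigma=\alpha$ since $f(\alpha)=0$. The only cosmetic difference is that you first extract a bounded subsequence and then a convergent one, whereas the paper extracts a convergent subsequence directly; this changes nothing.
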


\begin{proof}
Arguing by contradiction, we suppose that $\{\lambda_n\}$ does not diverge. Then, there exists a subsequence of $\{\lambda_n\}$, still denoted by $\{\lambda_n\}$ for simplicity, converging to some $\lambda\geq 0$. By Lemma~\ref{lema:existencia}, we can take for every $n\in\na$ a solution $u_n$ of $(P_{\lambda_n})$ with 
\begin{equation}
    \label{eq:Dem_prop_monotona_1}
    \min\{\alpha_n, \alpha_{n+1}\} < \|u_n\|_{L^\infty(\Omega)} < \max\{\alpha_n, \alpha_{n+1}\}.
\end{equation}
Since $\{\lambda_n f(u_n)\}$ is bounded in $\lio$, we can argue as in the third step of the proof of Lemma~\ref{lema:existencia} to show that $u_n\to u$ in $\mathcal{C}(\overline{\Omega})$ for some $u\in\huz\cap \mathcal{C}(\overline{\Omega})$ solution of~\eqref{eq:Problema_general}. Passing to the limit in~\eqref{eq:Dem_prop_monotona_1}, we get $\|u\|_{L^\infty(\Omega)} = \alpha$, but this contradicts Lemma~\ref{lema:no_existencia} since $f(\alpha)=0$.
\end{proof}

\begin{figure}[ht]
\label{fig:ceros_a_alfa_pos}
\centering
  \includegraphics[width=5cm]{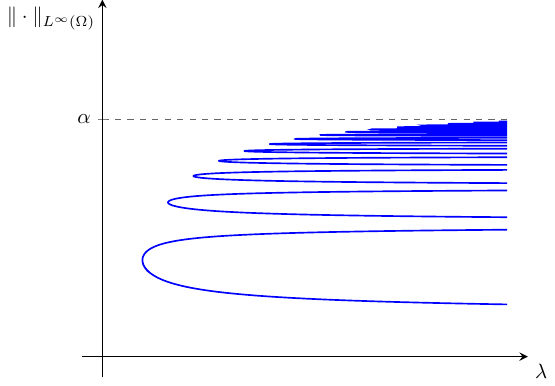}
\caption{Sketch of the solution set of problem~\eqref{eq:Problema_general} on the basis of Proposition~\ref{prop:acum_pos}.}
\end{figure}

When $\alpha_n\to 0$ or $\alpha_n\to +\infty$ one could expect the situation to be similar. However, in such cases the behavior of $\{\lambda_n\}$ depends strongly on $f$. In the following subsections, we study each of these cases separately. We will show, considering a particular family of functions $f$, that $\{\lambda_n\}$ can behave in several ways, some of them implying the existence of infinite bifurcation points from zero (when $\alpha_n\to 0$) or from infinity (when $\alpha_n\to+\infty$) which are not branching points in the sense of the next definition.%~\ref{defin} stated below.

%In order to give Definition~\ref{defin}, we denote by 
%\[
%\mathcal{S}_0=\{(\lambda,u)\in [0,+\infty)\times \mathcal{C}\left(\overline{\Omega}\right): u>0 \textrm{ solves } \eqref{eq:Problema_general}\},
%\]
%and
%\[
%\mathcal{S}_\infty=\left\{\left(\lambda, \frac{u}{\|u\|^2_{L^\infty(\Omega)}}\right)\in [0,+\infty)\times \mathcal{C}\left(\overline{\Omega}\right): u>0 \textrm{ solves } \eqref{eq:Problema_general}\right\}.
%\]

\begin{definition}

\begin{enumerate}
\item  A nonnegative number $\overline{\lambda}$ is said to be a \textit{bifurcation point from zero} to problem~\eqref{eq:Problema_general} whenever $(\overline{\lambda},0)\in \mathcal{B}_0$. 
%for which there exists a sequence $\{(\lambda_n, u_n)\}$ contained in the set
%\[
%S= \{(\lambda,u)\in [0,+\infty)\times \mathcal{C}\left(\overline{\Omega}\right): \text{ $u$ is a positive solution to~\eqref{eq:Problema_general}}\}
%\]
%such that $\lambda_n\to \overline{\lambda}$ and $\|u_n\|\to 0$ as $n\to\infty$. 
If there also exists a connected and closed subset $\Sigma_0$ of $\overline{\mathcal{S}\setminus \mathcal{T}}$ such that $(\overline{\lambda},0)\in \Sigma_0$, then we say that $\overline{\lambda}$ is a \textit{branching point from zero}.

\item  A nonnegative number $\overline{\lambda}$ is said to be a \textit{bifurcation point from infinity} to problem~\eqref{eq:Problema_general} whenever $(\overline{\lambda},0)\in \mathcal{B}_\infty$. 
%for which there exists a sequence $\{(\lambda_n, u_n)\}$ contained in the set
%\[
%S= \{(\lambda,u)\in [0,+\infty)\times \mathcal{C}\left(\overline{\Omega}\right): \text{ u is a positive solution to~\eqref{eq:Problema_general}}\}
%\]
%such that $\lambda_n\to \overline{\lambda}$ and $\|u_n\|\to\infty$ as $n\to\infty$. 
If there also exists a connected and closed subset $\Sigma_\infty$ of $\overline{\mathcal{S}}_\infty$ such that $(\overline{\lambda}, 0)\in \Sigma_\infty$
is connected and closed, then we say that $\overline{\lambda}$ is a \textit{branching point from infinity}.
\end{enumerate}
\end{definition}

The next result shows that whenever $\{\lambda_n\}$ does not diverge then infinitely many bifurcation points arise (actually an unbounded interval).
\begin{proposition}
\label{prop:bifurcacion}
Let $f$ be a nonnegative continuous function having a monotone sequence $\{\alpha_n\}$ of positive zeros converging either to $0$ or to $+\infty$ and verifying~\ref{eq:hip_f_creciente_3} for $\sigma=\alpha_n$ for every $n\in \mathbb{N}$. If the sequence $\{\lambda_n\}$ defined in~\eqref{eq:def_lambda_n} has an accumulation point $\lambda_0\geq 0$, then every $\lambda>\lambda_0$ is
\begin{enumerate}[i)]
    \item a bifurcation point from infinity but not a branching point if $\alpha_n\to+\infty$,
    \item a bifurcation point from zero but not a branching point if $\alpha_n\to 0$.
\end{enumerate}
\end{proposition}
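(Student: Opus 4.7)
The plan is, for every $\lambda>\lambda_0$, to produce a sequence of non-trivial solutions of \eqref{eq:Problema_general} at parameter $\lambda$ whose $L^\infty$-norms converge to $0$ or to $+\infty$ (giving the bifurcation property), and then to rule out their lying on a non-trivial continuum through $(\lambda,0)$ by exploiting the non-existence gaps at $\|u\|_{L^\infty(\Omega)}=\alpha_n$ provided by Lemma~\ref{lema:no_existencia}.

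For the bifurcation assertion I would fix $\lambda>\lambda_0$ and extract a subsequence $\lambda_{n_k}\to\lambda_0$. Eventually $\lambda_{n_k}<\lambda$, so Lemma~\ref{lema:existencia} gives a solution $u_k$ of $(P_\lambda)$ with
\[
\min\{\alpha_{n_k},\alpha_{n_k+1}\}<\|u_k\|_{L^\infty(\Omega)}<\max\{\alpha_{n_k},\alpha_{n_k+1}\}.
\]
In case ii) the right-hand side tends to $0$, so $u_k\to 0$ in $L^\infty(\Omega)$ and $(\lambda,0)\in \mathcal{T}\cap\overline{\mathcal{S}\setminus\mathcal{T}}=\mathcal{B}_0$. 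In case i) one has $\|u_k\|_{L^\infty(\Omega)}\to+\infty$, and a direct computation shows that the pairs $(\lambda,v_k)$ with $v_k:=u_k/\|u_k\|^2_{L^\infty(\Omega)}$ lie in $\mathcal{S}_\infty$ and satisfy $v_k\to 0$ in $L^\infty(\Omega)$, so $(\lambda,0)\in\mathcal{T}\cap\overline{\mathcal{S}_\infty}=\mathcal{B}_\infty$.

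For the non-branching part I would argue by contradiction using the continuous functional $N(\mu,u):=\|u\|_{L^\infty(\Omega)}$. Suppose some connected closed set $\Sigma\subset\overline{\mathcal{S}\setminus\mathcal{T}}$ (case ii) or $\Sigma\subset\overline{\mathcal{S}_\infty}$ (case i) passes through $(\lambda,0)$ and contains a point $(\mu,v)$ with $v\not\equiv 0$. Then $N(\Sigma)$ is a connected subset of $[0,+\infty)$ meeting both $0$ and $\|v\|_{L^\infty(\Omega)}>0$, and hence covers the full interval $[0,\|v\|_{L^\infty(\Omega)}]$. The key structural fact to establish is that $N(\Sigma)$ must avoid the set $\{\alpha_n:n\in\mathbb{N}\}$ in case ii) and $\{1/\alpha_n:n\in\mathbb{N}\}$ in case i). In case ii) this is because any point of $\overline{\mathcal{S}\setminus\mathcal{T}}$ with non-zero second component is itself a solution of the corresponding problem (via the elliptic compactness argument from the third step of Lemma~\ref{lema:existencia}), so Lemma~\ref{lema:no_existencia} forbids $\|u\|_{L^\infty(\Omega)}=\alpha_n$. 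In case i), a limit point $(\mu',v')$ of $\mathcal{S}_\infty$ with $v'\not\equiv 0$ gives, via $w:=v'/\|v'\|^2_{L^\infty(\Omega)}$, a solution $w$ of $(P_{\mu'})$ with $\|w\|_{L^\infty(\Omega)}=1/\|v'\|_{L^\infty(\Omega)}$, so $\|v'\|_{L^\infty(\Omega)}=1/\alpha_n$ is again impossible. Since $\alpha_n\to 0$ (case ii) and $1/\alpha_n\to 0$ (case i) both accumulate at $0$, the interval $[0,\|v\|_{L^\infty(\Omega)}]$ necessarily contains infinitely many forbidden values, the sought contradiction.

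The main technical point will be the ``limit-is-a-solution'' step for $\overline{\mathcal{S}_\infty}$: one must verify that if $(\mu_k,u_k)\in\mathcal{S}_\infty$ with $u_k\to v$ in $L^\infty(\Omega)$, $\mu_k\to\mu'$ and $v\not\equiv 0$, then the transformed sequence $w_k:=u_k/\|u_k\|^2_{L^\infty(\Omega)}$ enjoys a uniform $L^\infty$-bound (precisely because $\|u_k\|_{L^\infty(\Omega)}\to\|v\|_{L^\infty(\Omega)}>0$) and therefore, by the standard elliptic regularity plus Ascoli-Arzel\'a argument already used in Lemma~\ref{lema:existencia}, converges in $\mathcal{C}(\overline{\Omega})$ to a genuine solution of $(P_{\mu'})$. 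Once this closure property is in hand, the rest of the proof is a direct consequence of Lemmas~\ref{lema:existencia} and~\ref{lema:no_existencia} together with the connectedness of $N(\Sigma)$.
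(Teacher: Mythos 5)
Your proposal is correct and follows essentially the same route as the paper: the bifurcation part is identical (take a subsequence $\lambda_{n_k}\to\lambda_0$, use Lemma~\ref{lema:existencia} to produce solutions at parameter $\lambda$ with norms squeezed between consecutive zeros, and invert via $u\mapsto u/\|u\|^2_{L^\infty(\Omega)}$ in the bifurcation-from-infinity case), and the non-branching part is the same appeal to Lemma~\ref{lema:no_existencia}, which the paper states in one sentence and you correctly expand into the connectedness-of-$N(\Sigma)$ argument showing the norm image would have to cross the forbidden values $\alpha_n$ (resp. $1/\alpha_n$) accumulating at $0$.
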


\begin{proof}
Due to the similarities between the proof of both points, we give the proof of the first one and we indicate in parentheses the small changes related to the second one.

Let $\{\lambda_{\sigma(n)}\}$ be a subsequence convergent to $\lambda_0\geq 0$. Fix some $\lambda >\lambda_0$. Then there exists some $n_0\in\na$ such that $\lambda_{\sigma(n)}<\lambda$ for every $n\geq n_0$. Thus, $\lambda\in \Lambda_{\sigma(n)}$ for every $n\geq n_0$, where $\Lambda_n$ is defined in~\eqref{eq:def_Lambda_n}. The very definition of $\Lambda_n$ implies that $\lambda$ is a bifurcation point from infinity (resp. from zero).

On the other hand, by Lemma~\ref{lema:no_existencia} no $\lambda\geq 0$ can be a branching point from infinity (resp. from zero) since there are no positive solutions of~\eqref{eq:Problema_general} with $\lio$-norm equal to any $\alpha_n$.
\end{proof}

In order to obtain sufficient conditions for the boundedness of the sequence $\{\lambda_n\}$ it is important to note that, since the constants $\alpha_n$ are always supersolutions, those conditions depend on the existence of a convenient weak subsolution of~\eqref{eq:Problema_general}. This is the main goal of the next result whose proof follows the lines of~\cite[Lemma~3]{GM-I}.

\begin{remark}\label{rem:tecnica}
Assume that $f_1$ is a nonnegative function and $F_1(s)=\int_0^s f_1(t)\, \mathrm{d}t$. Then, given $\nu >0$ such that $f(\nu)\ne 0$ we have that 
\begin{equation*}
\int_0^\nu \frac{\mathrm{d}s}{\sqrt{F_1(\nu)-F_1(s)}} <+\infty.
\end{equation*}
Indeed, this is a consequence of having that $\displaystyle \lim_{s\to \nu^-} \frac{\nu-s}{F_1(\nu)-F_1(s)}=\frac{1}{f_1(\nu)}$. In the following we will use the notation 
\begin{equation}
    \label{eq:def_g}
    \hat f_1 (\nu) = \int_0^\nu \frac{\mathrm{d}s}{\sqrt{F_1(\nu)-F_1(s)}},\ \forall \nu\in[0,+\infty[ \textrm{ with } f_1(\nu)\ne 0.
\end{equation}
\end{remark}

%
%Before study the cases $\alpha_n\to 0$ and $\alpha_n\to+\infty$ separately by considering $f(t)=t^r(1+\sin t)$ and $f(t)=t^r\left(1+\sin \frac{1}{t}\right)$ respectively, we prove a technical lemma that will be useful for both of them. It is related with the existence of weak subsolutions of~\eqref{eq:Problema_general}. Our proof follows the lines of~\cite[Lemma~3]{GM-I}. \textcolor{red}{No me gusta mucho como est\'a enunciado el lema.}
\begin{lemma}
\label{lema:radial}
Assume that $f_1$ is a nonnegative function such that $f_1(s)\leq f(s)$, $s\geq 0$. For every $\nu>0$ with $f_1(\nu)\ne 0$ there exists $\lambda >0$ for which \eqref{eq:Problema_general} admits a nonnegative weak subsolution $\underline{u}$ such that $\|\underline{u}\|_{L^\infty(\Omega)} = \nu$.
\end{lemma}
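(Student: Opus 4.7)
The plan is to construct the subsolution $\underline u$ as a nonnegative radial bump supported in a small ball of $\Omega$, built from an ODE whose first integral features the quantity $\hat f_1(\nu)$ introduced in Remark~\ref{rem:tecnica}; this follows the strategy of \cite[Lemma~3]{GM-I}. By Remark~\ref{rem:tecnica}, the hypothesis $f_1(\nu)\ne 0$ ensures $\hat f_1(\nu)\in(0,+\infty)$, which is the quantity that will govern the relation between $\lambda$ and the support of $\underline u$.

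A natural first attempt is to use the one-dimensional autonomous Cauchy problem $-w''(t)=\lambda f_1(w(t))$, $w(0)=\nu$, $w'(0)=0$, whose first integral $\tfrac12(w')^2+\lambda F_1(w)=\lambda F_1(\nu)$ produces a monotone profile reaching $0$ at the explicit time $T(\lambda)=\hat f_1(\nu)/\sqrt{2\lambda}$. Selecting $x_0\in\Omega$ and $R>0$ with $\overline{B_R(x_0)}\subset\Omega$, setting $\lambda=\hat f_1(\nu)^2/(2R^2)$ so that $T(\lambda)=R$, and defining $\underline u(x):=w(|x-x_0|)$ for $|x-x_0|\le R$ and $\underline u(x):=0$ elsewhere in $\Omega$, one gets a function in $H_0^1(\Omega)\cap L^\infty(\Omega)$ with $\|\underline u\|_{L^\infty(\Omega)}=\nu$ and the correct boundary behaviour.

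The hard part is that this purely 1D-based bump is not directly a subsolution in $N\ge 2$: the radial expression for $-\Delta$ brings in the extra term $-\tfrac{N-1}{r}w'\ge 0$, which pushes $-\Delta\underline u$ above $\lambda f_1(\underline u)$. To repair this I would, following \cite[Lemma~3]{GM-I}, replace the 1D ODE by the radial ODE $-w''-\tfrac{N-1}{r}w'=\lambda f_1(w)$ with $w(0)=\nu$, $w'(0)=0$, and study its first zero $R_\lambda$. Multiplying by $w'$ shows that $E(r):=\tfrac12(w'(r))^2+\lambda F_1(w(r))$ satisfies $E'(r)=-\tfrac{N-1}{r}(w'(r))^2\le 0$, hence $|w'|\le\sqrt{2\lambda(F_1(\nu)-F_1(w))}$ and, by separation of variables, $R_\lambda\ge\hat f_1(\nu)/\sqrt{2\lambda}$. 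Combined with the scaling $w_\lambda(r)=w_1(\sqrt\lambda\,r)$, which gives $R_\lambda=R_1/\sqrt\lambda$, this lets one choose $\lambda$ so that $R_\lambda=R$; the radial function $\underline u(x)=w(|x-x_0|)$, extended by zero outside $B_R(x_0)$, is then an element of $H_0^1(\Omega)\cap L^\infty(\Omega)$ with $\|\underline u\|_{L^\infty(\Omega)}=\nu$ and it is a weak subsolution, since inside $B_R(x_0)$ it solves $-\Delta\underline u=\lambda f_1(\underline u)\le\lambda f(\underline u)$, while the null extension across $\partial B_R(x_0)$ contributes a nonpositive singular term to the distributional Laplacian (the normal derivative jumping from $w'(R)\le 0$ inside to $0$ outside), which is favourable for the subsolution inequality.

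The genuinely delicate point is the existence of a $\lambda=1$ radial Cauchy solution whose first zero $R_1$ is finite: a priori $w$ might only asymptote to some $w_\infty\in[0,\nu)$ with $f_1(w_\infty)=0$. Ruling out such asymptotic behaviour, or continuing the solution through the flat levels of $F_1$ until it actually vanishes, is the technical heart of \cite[Lemma~3]{GM-I} and is what ultimately pins down $\lambda$ in terms of $\hat f_1(\nu)$.
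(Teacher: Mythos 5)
Your overall strategy (a compactly supported radial bump whose profile is controlled by the time--map $\hat f_1(\nu)$) is the right one, but your proof has a genuine gap at exactly the point you flag yourself: you never establish that the radial Cauchy solution of $-w''-\tfrac{N-1}{r}w'=\lambda f_1(w)$, $w(0)=\nu$, $w'(0)=0$ has a \emph{finite} first zero. This is not a deferable technicality; it is the crux, and on your route it can actually fail. The energy identity $E'(r)=-\tfrac{N-1}{r}(w')^2\le 0$ only gives the upper bound $|w'|\le\sqrt{2\lambda(F_1(\nu)-F_1(w))}$, i.e.\ a \emph{lower} bound on the first zero, which is the wrong direction. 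Worse, the lemma only assumes $f_1\ge 0$ with $f_1(\nu)\ne 0$, so $f_1$ may vanish on a whole interval $[c,d]\subset(0,\nu)$; once the profile enters such a flat region the equation becomes $(r^{N-1}w')'=0$, so for $N\ge 3$ the remaining travel is the finite amount $|w'(r_0)|\,r_0/(N-2)$ and the profile can stall strictly above $0$. Your scaling observation $R_\lambda=R_1/\sqrt{\lambda}$ shows this obstruction is scale-invariant, so no choice of $\lambda$ repairs it. (Your verification that the null extension across the boundary sphere is compatible with the weak subsolution inequality is correct, but it presupposes the finite zero.)

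The paper avoids this entirely by working on an \emph{annulus} $A=\{R/2<|x-x_0|<R\}$ rather than a ball: the substitution $s=r^{2-N}/(N-2)$ (resp.\ $s=\ln r$ for $N=2$) kills the first-order term, reducing matters to $-z''\le\lambda r^{2(N-1)}f_1(z)$ on an interval, and since $r\ge R/2$ it suffices to solve the \emph{conservative} autonomous problem $-z''=\lambda(R/2)^{2(N-1)}f_1(z)$ with zero boundary values. There the energy $\tfrac12(z')^2+\mu F_1(z)$ is exactly conserved, so $|z'|=\sqrt{2\mu(F_1(\nu)-F_1(z))}>0$ below the level $\nu$ (note $F_1(\nu)-F_1(s)>0$ for all $s<\nu$ because $f_1(\nu)\ne 0$ and $f_1\ge 0$), no stalling can occur, and the quadrature formula pins down $\lambda$ through the identity $\hat f_1(\nu)=\sqrt{2\lambda}\,(R/2)^{N-1}\tfrac{b-a}{2}$, which is precisely where Remark~\ref{rem:tecnica} enters. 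If you want to salvage a ball-based construction you would need an additional hypothesis or argument guaranteeing the profile reaches zero; as written, replacing your third and fourth paragraphs by the annulus reduction is the clean fix.
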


%\begin{lemma}
%\label{lema:radial}
%Assume that $f$ is a nonnegative function. Choose $x_0\in \Omega$ and $R>0$ such that $A=\{x\in\re^N: \frac{R}{2}<|x-x_0|<R\}\subset \subset \Omega$, and take $a=\frac{1}{R^{N-2}(N-2)}$ and $b=\frac{2^{N-2}}{R^{N-2}(N-2)}$. If $M>0$ and $\lambda>0$ satisfy the relation
%\begin{equation}
%\label{eq:Lema_radial_igualdad}
%\int_0^M \frac{\mathrm{d}s}{\sqrt{F(M)-F(s)}} = (2\lambda)^\frac{1}{2} \left(\frac{R}{2}\right)^{N-1} \frac{b-a}{2},
%\end{equation}
%where $F(s)=\int_0^s f(t)\, \mathrm{d}t$, then there exists a nonnegative weak subsolution $\underline{u}$ of~\eqref{eq:Problema_general} such that $\|\underline{u}\|_{L^\infty(\Omega)} = M$.
%\end{lemma}

\begin{proof}
We consider $x_0\in \Omega$ and $R$ small enough such that $A=\{x\in\re^N: \frac{R}{2}<|x-x_0|<R\}\subset \subset \Omega$ and take $\lambda$ such that
\begin{equation}
\label{eq:Lema_radial_igualdad}
\hat f_1(\nu) = R\sqrt{\lambda} \, \frac{2^{N-2}-1}{2^{N-\frac12}},
\end{equation}
where $\hat f_1$ is defined in Remark~\ref{rem:tecnica} and we have used that $f_1(\nu)\ne 0$.

We claim that there exists a radially symmetric subsolution of the problem~\eqref{eq:Problema_general}, with $\Omega$ replaced by $A$ which, extended by zero to $\Omega$, constitutes a subsolution of~\eqref{eq:Problema_general}. Indeed, this radially symmetric subsolution of~\eqref{eq:Problema_general} in $A$, denoted by $w(r)$, where $r=|x-x_0|$, will satisfy
\begin{equation}
\label{eq:Dem_Lema_radial_1}
    \begin{cases}
   -w'' - \frac{N-1}{r}w' \leq \lambda f_1(w) &\text{ in } \frac{R}{2}<r<R,\\
   w\left(\frac{R}{2}\right) = w(R) = 0. &
    \end{cases}
\end{equation}

Now, in order to eliminate the first order term, we perform the change of variables $w(r) = z(s)$ with $s=\frac{r^{2-N}}{N-2}$  if $N\geq 3$ or $s=\ln (r)$ if $N=2$. Then, the above problem is equivalent to
\begin{equation*}
    \begin{cases}
    -z'' \leq \lambda g_1(s)f_1(z) &\text{ in } a<s<b,\\
    z(a) = z(b) = 0 &
    \end{cases}
\end{equation*}
with $a=\frac{1}{R^{N-2}(N-2)}$, $b=\frac{2^{N-2}}{R^{N-2}(N-2)}$ if $N\geq 3$; $a=\ln\left(\frac{R}{2}\right)$, $b=\ln(R)$ if $N=2$ and $g_1(s)=r^{2(N-1)}$ in both cases (i.e., if $N\geq 2$). Since $f_1$ is nonnegative, to our purpose it suffices to obtain a solution of the problem
\begin{equation*}
    \begin{cases}
    -z'' = \lambda \left(\frac{R}{2}\right)^{2(N-1)} f_1(z) &\text{ in } a<s<b,\\
    z(a) = z(b) = 0. &
    \end{cases}
\end{equation*}

This problem can be solved using standard techniques. We can find a positive solution, which is symmetric with respect to the midpoint of the interval $[a,b]$ and which is given implicitly by
\[
\int_0^{z(s)} \frac{\mathrm{d}z}{\sqrt{F_1(\nu)-F_1(z)}} = (2\lambda)^{\frac{1}{2}} \left(\frac{R}{2}\right)^{N-1} (s-a), \qquad a\leq s\leq\frac{a+b}{2},
\]
where $\nu=\max_{s\in[a,b]} z(s)$ is a solution of the equation
\[
\int_0^\nu \frac{\mathrm{d}s}{\sqrt{F_1(\nu)-F_1(s)}} = (2\lambda)^\frac{1}{2} \left(\frac{R}{2}\right)^{N-1} \frac{b-a}{2}.
\]
See that $\nu$ and $\lambda$ satisfy this relation by \eqref{eq:Lema_radial_igualdad}.

Undoing the change of variables, we obtain that $w(r)$ verifies~\eqref{eq:Dem_Lema_radial_1}. Finally, we define
\begin{equation*}
    \underline{u}(x) =
    \begin{cases}
    w(x) & \text{ if } x\in A,\\
    0 & \text{ if } x\in\Omega\backslash A.
    \end{cases}
\end{equation*}
and in this way we obtain a subsolution of~\eqref{eq:Problema_general} satisfying that $\|\underline{u}\|_{L^\infty(\Omega)} = \nu$.
\end{proof}

%\begin{remark}
%As shown in~\cite{B-GM-I, GM-I}, this lemma can be used to obtain solutions of~\eqref{eq:Problema_general} as long as $f$ is continuous and $f(\alpha)=0$ for some $\alpha>0$ . Indeed, see that in the relation~\eqref{eq:Lema_radial_igualdad} the parameter $\lambda$ can be expressed as a function of $\nu$, so we can choose $\nu\in]0,\alpha[$ and this $\nu$ determines some $\lambda_0>0$ given by relation~\eqref{eq:Lema_radial_igualdad}. Thus, the sub and supersolution method (see~\cite{Clement-Sweers-2}) provides us for every $\lambda\geq \lambda_0$ a solution $u$ of~\eqref{eq:Problema_general} with $\nu\leq\|u\|_{L^\infty(\Omega)}\leq \alpha$.
%\end{remark}

%In the following subsections, one of the keys will be to study in detail this relationship between $M$ and $\lambda$ established in~\eqref{eq:Lema_radial_igualdad} for some specific functions $f$. For this purpose, we define the function $g\colon [0,+\infty) \to [0,+\infty)$ by
%\begin{equation}
%    \label{eq:def_g}
%    g(M) = \int_0^M \frac{\mathrm{d}s}{\sqrt{F(M)-F(s)}},\ \forall M\in[0,+\infty[,
%\end{equation}
%where $F(s) = \int_0^s f(t)\, \mathrm{d}t$.
%

%\subsection{The problem \eqref{eq:Problema_general} with the nonlinearity $f(s)=s^r (1+\sin s)$ and $r\geq 0$ }\hfill
%\label{sec:seno_u}

\subsection{The problem \eqref{eq:Problema_general} with $f$ having a divergent sequence of zeros.}\hfill
\label{sec:seno_u}

%Now, we analyze an example in which the sequence of positive zeros of $f$ tends to infinity.

In this subsection we consider a wide class of particular cases where the nonlinearity is given by $f(t)=t^r(1+\sin t)g(t)$ for some $r\geq 0$ and $g$ satisfying \eqref{hipg}, i.e., we study the problem
\begin{equation*}
    \tag{$Q_\lambda$}
    \begin{cases}
   -\Delta u = \lambda u^r(1+\sin u) g(u) &\text{ in } \Omega,\\
   u=0 &\text{ on } \partial\Omega.
    \end{cases}
\end{equation*}
 In this case the sequence $\{\alpha_n\}$ of positive zeros of $f$ diverges and can be explicitly determined by
\begin{equation}
    \label{eq:def_alpha_n_seno_u}
    \alpha_n = -\frac{\pi}{2} + 2\pi n,\ \forall n\in\na.
\end{equation}
We observe that $f$ satisfies the assumption~\ref{eq:hip_f_creciente_3} for $\sigma=\alpha_n$ for every $n\in \mathbb{N}$.

The next result provides a lower bound for $\lambda_n$ defined in~\eqref{eq:def_lambda_n} for the problem $(Q_\lambda)$ in terms of the sequence $\{\alpha_n\}$.

%To estimate the asymptotic behavior of the sequence $\{\lambda_n\}$ defined in~\eqref{eq:def_lambda_n}, what we do is to obtain appropriate lower and upper bounds depending on $n$. We start with the lower bound.

\begin{proposition}
\label{prop:cota_inferior_seno_u}
Assume \eqref{hipg}, let $\alpha_n$ be given by~\eqref{eq:def_alpha_n_seno_u} and let $\{\lambda_n\}$ be the sequence defined in~\eqref{eq:def_lambda_n} for the problem~\eqref{eq:Problema_seno_u}. Then there exists some $C>0$ independent from $n$ such that
\[
C\frac{\alpha_n}{\alpha_{n+1}^r} \leq \lambda_n,\ \forall n\in\na.
\]
%where $\{\alpha_n\}$ is the sequence of positive zeros of $f(t)=t^r(1+\sin t)$ defined in~\eqref{eq:def_alpha_n_seno_u}.
\end{proposition}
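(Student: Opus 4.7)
The plan is to obtain the lower bound by estimating $f$ pointwise from above, reducing the elliptic PDE to a linear comparison against the torsion equation, and then using $\|u\|_{L^\infty(\Omega)} \geq \alpha_n$.

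First, fix $n\in\mathbb{N}$ and take any $\lambda\in \Lambda_n$ together with a corresponding solution $u$ of \eqref{eq:Problema_seno_u} satisfying $\alpha_n<\|u\|_{L^\infty(\Omega)}<\alpha_{n+1}$. Since $0\leq 1+\sin t\leq 2$ and, by \eqref{hipg}, $g$ is bounded on $[0,+\infty)$ with bound $G:=\sup_{s\geq 0} g(s)<+\infty$, the nonlinearity is controlled by the monomial $t^r$:
\begin{equation*}
0\leq f(t)=t^r(1+\sin t)g(t)\leq 2G\,t^r, \qquad t\geq 0.
\end{equation*}
Because $0\leq u\leq \alpha_{n+1}$ and $r\geq 0$, this gives the pointwise bound $0\leq f(u(x))\leq 2G\,\alpha_{n+1}^{\,r}$ almost everywhere in $\Omega$. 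Thus $u$ satisfies, in the weak sense,
\begin{equation*}
-\Delta u \leq 2G\,\lambda\,\alpha_{n+1}^{\,r} \quad \text{in } \Omega, \qquad u=0 \text{ on } \partial\Omega.
\end{equation*}

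Next, I compare $u$ with the torsion function $w\in H^1_0(\Omega)\cap L^\infty(\Omega)$, defined as the unique solution of $-\Delta w=1$ in $\Omega$, $w=0$ on $\partial\Omega$. Let $W:=\|w\|_{L^\infty(\Omega)}$, a constant depending only on $\Omega$. By the weak maximum principle applied to $2G\,\lambda\,\alpha_{n+1}^{\,r}\,w - u$, one obtains
\begin{equation*}
\|u\|_{L^\infty(\Omega)} \leq 2G\,\lambda\,\alpha_{n+1}^{\,r}\,W.
\end{equation*}
Combining this with $\|u\|_{L^\infty(\Omega)}>\alpha_n$, I get
\begin{equation*}
\alpha_n < 2GW\,\lambda\,\alpha_{n+1}^{\,r},
\end{equation*}
so that
\begin{equation*}
\lambda > \frac{1}{2GW}\cdot\frac{\alpha_n}{\alpha_{n+1}^{\,r}}.
\end{equation*}

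Finally, since the inequality holds for every $\lambda\in\Lambda_n$, taking the infimum yields
\begin{equation*}
\lambda_n \geq C\,\frac{\alpha_n}{\alpha_{n+1}^{\,r}}, \qquad \text{with } C:=\frac{1}{2GW},
\end{equation*}
where $C$ depends only on $\Omega$ and on the bound of $g$, and in particular is independent of $n$. There is no real obstacle here: the only observations needed are the crude pointwise estimate $1+\sin t\leq 2$ together with the boundedness of $g$ from \eqref{hipg}, which turn the nonlinear problem into a linear $L^\infty$-comparison against the torsion function.
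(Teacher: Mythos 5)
Your proof is correct and follows essentially the same strategy as the paper: bound the right-hand side by $2\sup_{s\ge 0}g(s)\,\lambda\,\alpha_{n+1}^{r}$ using $\|u\|_{L^\infty(\Omega)}<\alpha_{n+1}$, deduce an $L^\infty$ estimate of the form $\|u\|_{L^\infty(\Omega)}\le C\lambda\alpha_{n+1}^{r}$, and combine it with $\alpha_n<\|u\|_{L^\infty(\Omega)}$. The only (harmless) difference is the tool used for the $L^\infty$ estimate: you compare with the torsion function via the weak maximum principle, whereas the paper tests with $G_k(u_n)$ and invokes Stampacchia's theorem; both yield a constant depending only on $\Omega$.
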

\begin{proof}
By the definition of $\lambda_n$, we can consider, for every $n\in\na$, a solution $u_n\in\huz\cap \mathcal{C}\left(\overline{\Omega}\right)$ of $(Q_{\lambda_n})$ satisfying that 
\begin{equation}
    \label{eq:Dem_cota_inferior_seno_u_1}
    \alpha_n < \|u_n\|_{L^\infty(\Omega)} < \alpha_{n+1}.
\end{equation}

Taking $G_k(u_n) = \max\{u_n-k, \min\{u_n+k,0\}\} \in\huz$ with $k>0$ as test function in $(Q_{\lambda_n})$ and using~\eqref{eq:Dem_cota_inferior_seno_u_1}, we obtain that
\begin{align*}
 \into |\nabla G_k(u_n)|^2 = & \lambda_n \into u_n^r (1+\sin u_n)g(u_n) G_k(u_n) 
 \\
 \leq & 2\sup_{s\geq 0} g(s)\, \alpha_{n+1}^r\lambda_n \into G_k(u_n).
\end{align*}
By Stampacchia Theorem (see~\cite{Stamp}), this implies the existence of some $C(N,\Omega)>0$ such that
\[
\|u_n\|_{L^\infty(\Omega)} \leq C\alpha_{n+1}^r \lambda_n,\ \forall n\in\na.
\]
This inequality combined with~\eqref{eq:Dem_cota_inferior_seno_u_1} allow us to finish the proof.
\end{proof}

We now prove a technical lemma which, combined with Lemma~\ref{lema:radial}, will provide us the upper bound for $\{\lambda_n\}$ from a certain $n_0\in\na$. It is related with the function $\hat f$ defined in~\eqref{eq:def_g} and with the points $\nu_n=\alpha_n+ \pi$, i.e. 
\begin{equation}
    \label{eq:def_M_n_seno_u}
    \nu_n = \alpha_n+\pi = \frac{\pi}{2} + 2\pi n,\ \forall n\in\na.
\end{equation}
Observe that $1+\sin \nu_n=2$ and $\alpha_{n}<\nu_n<\alpha_{n+1}$ for every $n\in\na$. 
%where $\{\alpha_n\}$ is defined in~\eqref{eq:def_alpha_n_seno_u}, and that $M_n\to+\infty$.

\begin{lemma}
\label{lema:cota_superior_seno_u} Let \eqref{hipg} be satisfied and denote $\gamma = \inf_{s\geq 0}g(s)>0$. Let $f_1(t) = \gamma\, t^r(1+\sin t)$ with $r\geq 0$, $\nu_n$ defined by~\eqref{eq:def_M_n_seno_u} and let $\hat f_1$ be defined by~\eqref{eq:def_g}. Then there exists $n_0\in\na$ and $C>0$ independent of $n$ such that
\[
\hat f_1(\nu_n) \leq C \nu_n^\frac{1-r}{2},\ \forall n\geq n_0.
\]
%where $\hat f$ and $M_n$ are defined in~\eqref{eq:def_g} and~\eqref{eq:def_M_n_seno_u} respectively.
\end{lemma}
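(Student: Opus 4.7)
My plan is to exploit the fact that $\nu_n=\pi/2+2\pi n$ is a maximum of $\sin$, so that an explicit computation of $\int_{\nu_n-u}^{\nu_n}(1+\sin t)\,dt$ produces a linear-in-$u$ lower bound, just strong enough to make the singular integrand $1/\sqrt{F_1(\nu_n)-F_1(s)}$ integrable at $s=\nu_n$ with the correct scale in $\nu_n$. First, I would perform the change of variable $u=\nu_n-s$ in the integral defining $\hat f_1(\nu_n)$, rewriting it as
$$\hat f_1(\nu_n)=\int_0^{\nu_n}\frac{du}{\sqrt{F_1(\nu_n)-F_1(\nu_n-u)}}.$$

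Since $r\ge 0$, on $[\nu_n-u,\nu_n]$ one has $t^r\ge (\nu_n-u)^r$, so
$$F_1(\nu_n)-F_1(\nu_n-u)\ge \gamma(\nu_n-u)^r\int_{\nu_n-u}^{\nu_n}(1+\sin t)\,dt.$$
The key identity is that, since $\cos\nu_n=0$ and $\cos(\nu_n-u)=\cos(\pi/2-u)=\sin u$ by $2\pi$-periodicity,
$$\int_{\nu_n-u}^{\nu_n}(1+\sin t)\,dt=u+\cos(\nu_n-u)-\cos\nu_n=u+\sin u.$$
A short case analysis gives $u+\sin u\ge (1-\pi^{-1})u$ for every $u\ge 0$ (on $[0,\pi]$ use $\sin u\ge 0$; on $[\pi,+\infty)$ use $\sin u\ge -1\ge -u/\pi$). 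Combining,
$$F_1(\nu_n)-F_1(\nu_n-u)\ge c\,\gamma(\nu_n-u)^r\, u,\qquad u\in[0,\nu_n],$$
for a constant $c>0$ independent of $n$ and $u$.

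Next, I would split $[0,\nu_n]$ as $[0,\nu_n/2]\cup[\nu_n/2,\nu_n]$. On $[0,\nu_n/2]$ one has $(\nu_n-u)^r\ge (\nu_n/2)^r$, whence
$$\int_0^{\nu_n/2}\frac{du}{\sqrt{F_1(\nu_n)-F_1(\nu_n-u)}}\le \frac{1}{\sqrt{c\gamma (\nu_n/2)^r}}\int_0^{\nu_n/2}\frac{du}{\sqrt u}=C_1\,\nu_n^{(1-r)/2}.$$
On $[\nu_n/2,\nu_n]$ the denominator is bounded below by $\sqrt{F_1(\nu_n)-F_1(\nu_n/2)}\ge\sqrt{c\gamma(\nu_n/2)^{r+1}}$ (apply the previous bound with $u=\nu_n/2$), so this contribution is at most $(\nu_n/2)/\sqrt{c\gamma(\nu_n/2)^{r+1}}=C_2\,\nu_n^{(1-r)/2}$. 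Adding both pieces, $\hat f_1(\nu_n)\le C\,\nu_n^{(1-r)/2}$ for all $n\ge n_0$ (any $n_0$ large enough that $\nu_n\ge 1$ suffices).

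The main obstacle is the exact computation $\int_{\nu_n-u}^{\nu_n}(1+\sin t)\,dt=u+\sin u$: without using that $\nu_n$ is a maximum of $\sin$, the nearby zero $\alpha_n<\nu_n$ of $1+\sin$ would force the difference $F_1(\nu_n)-F_1(s)$ to be only quadratic in $\nu_n-s$ near $s=\nu_n$, producing a non-integrable singularity. The identity $\cos(\nu_n-u)=\sin u$ is precisely what trades this quadratic behavior for the linear-in-$u$ lower bound needed to recover the scaling $\nu_n^{(1-r)/2}$; the rest is a routine split-and-estimate that uses only $r\ge 0$ and the uniform positivity $\gamma=\inf g>0$.
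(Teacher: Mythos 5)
Your proof is correct, and it takes a genuinely different and substantially simpler route than the paper's. The paper reduces the lemma to the uniform lower bound $\int_s^{\nu_n} t^r(1+\sin t)\,\mathrm{d}t \geq 2C\nu_n^r(\nu_n-s)$ for all $s\in[0,\nu_n[$, and proving that claim occupies most of its argument: a L'H\^opital computation near $s=\nu_n$, an integration-by-parts estimate for $s\leq \nu_n-c_r$, and a delicate analysis of interior critical points of $h_n$ on the remaining strip, including excluding $\delta$-neighbourhoods of the zeros of $1+\sin$. You instead settle for the weaker pointwise bound $F_1(\nu_n)-F_1(\nu_n-u)\geq c\gamma(\nu_n-u)^r u$ (with $(\nu_n-u)^r$ in place of $\nu_n^r$), which follows in two lines from the exact identity $\int_{\nu_n-u}^{\nu_n}(1+\sin t)\,\mathrm{d}t = u+\sin u$ --- valid precisely because $\cos\nu_n=0$ and $\cos(\nu_n-u)=\sin u$ --- together with $u+\sin u\geq(1-\pi^{-1})u$. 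The split of $[0,\nu_n]$ at $\nu_n/2$, using the monotonicity of $F_1$ on the far piece, then recovers the scaling $\nu_n^{(1-r)/2}$ with constants independent of $n$; all steps check out, and in fact your estimate holds for every $n\geq 1$, so no largeness of $n_0$ is needed. The only quibble is with your closing commentary: near $s=\nu_n$ the difference $F_1(\nu_n)-F_1(s)$ is genuinely linear in $\nu_n-s$ (since $1+\sin\nu_n=2\neq 0$), so the ``quadratic singularity'' you describe is not what would go wrong at the upper endpoint; the real role of the identity is to give a lower bound that is linear in $u$ \emph{uniformly over the whole interval}, surviving the flat spots of $F_1$ at the zeros $\alpha_k$. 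This is a remark about motivation only and does not affect the validity of the proof.
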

\begin{proof}
Along the proof we will denote by $C$ several positive constants independent of $n$ whose value may change from line to line and, sometimes, on the same line.

We claim that there exists $n_0\in\na$ and $C>0$ such that
\begin{equation}
\label{eq:Dem_lema_cota_sup_seno_u_6}
h_n(s)\equiv \frac{\int_s^{\nu_n} f_2(t) \, \mathrm{d}t}{2\nu_n^r(\nu_n-s)} \geq C,\ \forall s\in [0,\nu_n[,\ \forall n\geq n_0,
\end{equation}
where $f_2(t) \equiv \gamma^{-1}f_1(t) = t^r(1+\sin t)$.

Assuming that the claim is true, our assertion immediately follows. Indeed, in that case:
\begin{align*}
    \hat f_1(\nu_n) &= \int_0^{\nu_n} \frac{\mathrm{d}s}{\sqrt{F_1(\nu_n)-F_1(s)}} 
    = \int_0^{\nu_n} \frac{\mathrm{d}s}{\sqrt{\int_s^{\nu_n} f_1(t) \, \mathrm{d}t}} \\
    &= \int_0^{\nu_n} \frac{\mathrm{d}s}{\sqrt{\gamma \int_s^{\nu_n} f_2(t) \, \mathrm{d}t}}
    \leq \int_0^{\nu_n} \frac{\mathrm{d}s}{\sqrt{ 2\gamma C\nu_n^r (\nu_n-s)}} \\
    &= \frac{1}{\sqrt{2\gamma C\nu_n^r}} \left[-2\sqrt{\nu_n-s} \right]_{s=0}^{s=\nu_n} 
    = \frac{\sqrt{2}}{\sqrt{\gamma C}} \nu_n^{\frac{1-r}{2}},\ \forall n\geq n_0,
\end{align*}
and the proof finishes. So to conclude it remains only to prove the claim~\eqref{eq:Dem_lema_cota_sup_seno_u_6}. 
%
%With this aim, we define for each $n\in\na$ the function
%\[
%h_n(s) = \frac{\int_s^{M_n} f(t) \, \mathrm{d}t}{2M_n^r(M_n-s)},\ \forall s\in [0,M_n[.
%\]

First of all, using L'HÃŽpital's rule we obtain that
\begin{equation}
    \label{eq:Dem_lema_cota_sup_seno_u_1}
    \lim_{s\to \nu_n} h_n(s) = \lim_{s\to \nu_n} \frac{-f_2(s)}{-2\nu_n^r} = \frac{f_2(\nu_n)}{2\nu_n^r} = 1,\ \forall n\in\na.
\end{equation}

Now we prove that 
\begin{equation}
    \label{eq:Dem_lema_cota_sup_seno_u_2}
    h_n(s)\geq C,\ \forall s\in[0, \nu_n-c_r],\ \forall n\in\na,
\end{equation} 
for some $c_r>0$ independent on $n\in \mathbb{N}$. 
%See that the distance between $M_n-c_1$ and $M_n$ remains fixed for every $n\in\na$. 
Taking into account that
\[
\int_s^{\nu_n} f_2(t) \, \mathrm{d}t = \frac{1}{r+1}(\nu_n^{r+1} - s^{r+1}) - \nu_n^r \cos \nu_n + s^r\cos s + r\int_s^{\nu_n} t^{r-1} \cos t\, \mathrm{d}t
\]
and that since $\cos \nu_n =0$ we have
\[
\left|- \nu_n^r \cos \nu_n + s^r\cos s + r\int_s^{\nu_n} t^{r-1} \cos t\, \mathrm{d}t \right| \leq s^r|\cos s| + \nu_n^r - s^r \leq \nu_n^r.
\]
Therefore we deduce that
\[
\int_s^{\nu_n} f_2(t) \, \mathrm{d}t \geq \frac{1}{r+1}(\nu_n^{r+1} - s^{r+1}) - \nu_n^r.
\]
Thus, we obtain that
\begin{align*}
h_n(s) &\geq  \frac{1}{2\nu_n^r(\nu_n-s)} \left[\frac{1}{r+1}(\nu_n^{r+1} - s^{r+1}) - \nu_n^r\right]\\
&= \frac{1}{2(r+1)} \left[1+ \frac{s(\nu_n^r-s^r)}{\nu_n^r(\nu_n-s)}\right] - \frac{1}{2(\nu_n-s)} \geq \frac{1}{2(r+1)} - \frac{1}{2(\nu_n-s)}.
\end{align*}
In particular, we get that $h_n(s)\geq C>0$ whenever 
\[
s< \nu_n - \frac{1}{\frac{1}{r+1}-2C},
\]
being $C$ a positive constant small enough. Thus, we can choose $c_r = r+1+\varepsilon$ with $\varepsilon>0$ small and~\eqref{eq:Dem_lema_cota_sup_seno_u_2} follows.
%\begin{equation}
%    \label{eq:Dem_lema_cota_sup_seno_u_2}
%    h_n(s)\geq C,\ \forall s\in[0, M_n-c_r],\ \forall n\in\na.
%\end{equation}

It remains only to show that 
\begin{equation}
    \label{eq:Dem_lema_cota_sup_seno_u_5}
    h_n(s)\geq C,\ \forall s\in[\nu_n-c_r, \nu_n[,\ \forall n\geq n_0,
\end{equation}
for some $n_0\in \mathbb{N}$. Observe that if $h_n$ does not have any interior minimum point in that interval, then this is a direct consequence  of~\eqref{eq:Dem_lema_cota_sup_seno_u_1} and~\eqref{eq:Dem_lema_cota_sup_seno_u_2}. We assume now that $h_n$ has some minimum critical point $s_0\in ]\nu_n-c_r, \nu_n[$ (observe that $s_0$ depends on $n$). In what follows we are going to prove that $h_n(s_0)\geq C$ for $n$ large, and this will imply that $h_n(s) \geq C$ for $s\in [\nu_n-c_r, \nu_n[$ and $n$ large.

We first observe that $h_n'(s_0)=0$, i.e. we have that
\[
\int_{s_0}^{\nu_n} f_2(t)\, \mathrm{d}t = f_2(s_0) (\nu_n-s_0).
\]
Thus, $s_0$ satisfies that
\begin{equation*}
    h_n(s_0) = \frac{f_2(s_0)}{2\nu_n^r} = \frac{s_0^r (1+\sin s_0)}{2\nu_n^r} \geq \frac{1+\sin s_0}{2} \frac{(\nu_n-c_r)^r}{\nu_n^r} \geq C \frac{1+\sin s_0}{2},\ \forall n\geq n_0,
\end{equation*}
for some $n_0\in\na$ large, since $\frac{\nu_n-c_r}{\nu_n} \to 1$.

Let us finally check that $1+\sin s_0$ is bounded below by a positive constant. This is clear if the set of zeros of the function $1+\sin s$ does not intersect the interval $[\nu_n-c_r, \nu_n[$. Otherwise, this set of zeros is given by 
\[
\left\{\alpha_k^{(n)} \equiv \nu_n-\pi-2\pi k,\ \forall k=0,\dots,k_0\right\},
\]
where $k_0 = \max\{k\in\na: \alpha_k^{(n)} > \nu_n - c_r\}=\max\{k\in\na: c_r> \pi+2k\pi\}$ and does not depend on $n$. Let us fix $\delta>0$ small to be chosen later, given $k\in\{0,\dots,k_0\}$ and $\alpha\in [\nu_n-c_r, \nu_n[ \cap ]\alpha_k^{(n)}-\delta, \alpha_k^{(n)}+\delta[$, using that $\nu_n-\alpha < c_r$ and that $\alpha_k^{(n)}< \alpha_0^{(n)}$, we have that
\begin{equation}
    \begin{split}
    \label{eq:Dem_lema_cota_sup_seno_u_3}
    h'_n(\alpha) &= \frac{-f_2(\alpha)(\nu_n-\alpha) + \int_\alpha^{\nu_n} f_2(t)\, \mathrm{d}t}{2\nu_n^r(\nu_n-\alpha)^2}
    \\
    & \geq \frac{-f_2\left(\alpha_0^{(n)} +\delta\right)c_r + \int_{\alpha_0^{(n)}+ \delta}^{\nu_n} f_2(t)\, \mathrm{d}t}{2 c_r^2 \nu_n^r} \\
    &= \frac{-\left(1-\cos\delta\right)\left(\alpha_0^{(n)} + \delta\right)^r c_r + \int_{\alpha_0^{(n)}+ \delta}^{\nu_n} f_2(t)\, \mathrm{d}t}{2 c_r^2 \nu_n^r} \\
    &= \frac{-C_\delta^{(1)}\left(\nu_n-\pi + \delta\right)^r + \int_{\nu_n-\pi + \delta}^{\nu_n} f_2(t)\, \mathrm{d}t}{2c_r^2 \nu_n^r},
    \end{split}
\end{equation}
where $C_\delta^{(1)}$ is a positive constant such that $C_\delta^{(1)}\to 0$ when $\delta\to 0$. With respect to the last integral in~\eqref{eq:Dem_lema_cota_sup_seno_u_3}, we can deduce that
\begin{equation}
    \begin{split}
    \label{eq:Dem_lema_cota_sup_seno_u_4}
    \int_{\nu_n-\pi + \delta}^{\nu_n} f_2(t)\, \mathrm{d}t &= \int_{\nu_n-\pi + \delta}^{\nu_n} t^r(1+\sin t)\, \mathrm{d}t 
    \\
    & \geq (\nu_n-\pi+\delta)^r \int_{\nu_n-\pi + \delta}^{\nu_n} (1+\sin t)\, \mathrm{d}t \\
%    &= (M_n-\pi-2k\pi+\delta)^r \Big[t-\cos t\Big]_{t=M_n-\pi+\delta}^{t=M_n} \\
    &= (\nu_n-\pi+\delta)^r (\pi-\delta + \sin \delta) 
    \\
    &= C_\delta^{(2)} (\nu_n-\pi+\delta)^r,
    \end{split}
\end{equation}
where $C_\delta^{(2)}$ is a positive constant such that $C_\delta^{(2)}\to \pi$ when $\delta\to 0$. Combining~\eqref{eq:Dem_lema_cota_sup_seno_u_3} and~\eqref{eq:Dem_lema_cota_sup_seno_u_4}, we obtain that
\[
h'_n(\alpha) \geq \frac{ \left(-C_\delta^{(1)} + C_\delta^{(2)}\right) (\nu_n-\pi+\delta)^r}{2 c_r^2 \nu_n^r}.
\] 
Therefore $h'(\alpha)\geq C$ for $\delta$ small enough and $n$ large, since $\frac{\nu_n-\pi-\delta}{\nu_n}\to 1$.

Thus, when $n$ is large we have proved that $s_0\notin \bigcup_{k=0}^{k_0}]\alpha_k^{(n)}-\delta, \alpha_k^{(n)}+\delta[$, which, in particular implies that $1+\sin s_0\geq C$ and we have \eqref{eq:Dem_lema_cota_sup_seno_u_5} proved. 

From~\eqref{eq:Dem_lema_cota_sup_seno_u_1},~\eqref{eq:Dem_lema_cota_sup_seno_u_2} and~\eqref{eq:Dem_lema_cota_sup_seno_u_5} we conclude that
\[
h_n(s) \geq C,\ \forall s\in [0,\nu_n[,\ \forall n\geq n_0,
\]
proving the claim~\eqref{eq:Dem_lema_cota_sup_seno_u_6} and ending the proof.
% Aqui $n_0$ se puede calcular y es $\max\left\{1,\frac{r+1-\frac{\pi}{2}}{2\pi} \right\}+1$
\end{proof}

Now we deal with an upper bound for the sequence $\{\lambda_n\}$.

\begin{proposition}
\label{prop:cota_superior_seno_u}
Assume that \eqref{hipg} is satisfied. Let $\nu_n$ be given by~\eqref{eq:def_M_n_seno_u} and let $\{\lambda_n\}$ be the sequence defined in~\eqref{eq:def_lambda_n} for the problem~\eqref{eq:Problema_seno_u}. Then there exist some $n_0\in\na$ and some $C>0$ independent from $n$ such that
\[
\lambda_n \leq C \nu_n^{1-r},\ \forall n\geq n_0.
\]
%where $M_n$ is defined in~\eqref{eq:def_M_n_seno_u} and $C>0$ is a constant independent from $n$.
\end{proposition}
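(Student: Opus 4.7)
The plan is to combine Lemma~\ref{lema:radial} (which produces a radial subsolution with prescribed sup-norm $\nu$ at an explicit value of $\lambda$) with the analytic estimate of Lemma~\ref{lema:cota_superior_seno_u} (which controls $\hat f_1(\nu_n)$), and then close the sub/supersolution argument using the constant supersolution $\alpha_{n+1}$.

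Concretely, set $\gamma=\inf_{s\geq 0} g(s)>0$ and define $f_1(t)=\gamma\, t^r(1+\sin t)$. Since $g(t)\geq \gamma$ for all $t\geq 0$, we have $f_1(t)\leq f(t)=t^r(1+\sin t)g(t)$. For $\nu=\nu_n=\pi/2+2\pi n$ we have $f_1(\nu_n)=2\gamma\nu_n^r\neq 0$, so Lemma~\ref{lema:radial} applies: with the fixed annulus $A=\{R/2<|x-x_0|<R\}\subset\subset \Omega$ and the parameter
\[
\lambda^{(n)} = \left(\frac{2^{N-1/2}}{R(2^{N-2}-1)}\right)^{\!2} \hat f_1(\nu_n)^2,
\]
we obtain a weak subsolution $\underline u_n$ of $(Q_{\lambda^{(n)}})$ (extended by zero to $\Omega\setminus A$) with $\|\underline u_n\|_{L^\infty(\Omega)}=\nu_n$. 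Applying Lemma~\ref{lema:cota_superior_seno_u} yields $\hat f_1(\nu_n)\leq C\,\nu_n^{(1-r)/2}$ for $n\geq n_0$, and thus
\[
\lambda^{(n)} \leq C\,\nu_n^{1-r},\qquad \forall n\geq n_0,
\]
for a constant $C>0$ independent of $n$ (absorbing the geometric factors depending only on $R,N$).

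Next I would upgrade this subsolution to an actual solution of $(Q_{\lambda^{(n)}})$ whose sup-norm lies in $(\alpha_n,\alpha_{n+1})$. Since $f$ is nonnegative, the constant function $\overline u=\alpha_{n+1}$ is a supersolution of $(Q_{\lambda^{(n)}})$, and by construction $\underline u_n\leq \nu_n<\alpha_{n+1}=\overline u$. The sub- and supersolution method (\cite{Clement-Sweers-2}) then produces a solution $u_n$ of $(Q_{\lambda^{(n)}})$ with $\underline u_n\leq u_n\leq \alpha_{n+1}$. Hence
\[
\alpha_n<\nu_n=\|\underline u_n\|_{L^\infty(\Omega)}\leq \|u_n\|_{L^\infty(\Omega)}\leq \alpha_{n+1},
\]
and Lemma~\ref{lema:no_existencia} applied at $\sigma=\alpha_{n+1}$ rules out equality on the right, so $\alpha_n<\|u_n\|_{L^\infty(\Omega)}<\alpha_{n+1}$. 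This shows $\lambda^{(n)}\in\Lambda_n$, and by the definition of $\lambda_n$ as the infimum of $\Lambda_n$ we conclude $\lambda_n\leq \lambda^{(n)}\leq C\,\nu_n^{1-r}$ for every $n\geq n_0$.

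There is no real obstacle: the two main analytic steps (building the radial subsolution and estimating $\hat f_1(\nu_n)$) have already been handled in Lemma~\ref{lema:radial} and Lemma~\ref{lema:cota_superior_seno_u}, so the proof of the proposition is essentially a bookkeeping assembly of these pieces. The only minor subtlety is ensuring that the sub/supersolution ordering $\underline u_n\leq \alpha_{n+1}$ holds, which is immediate from $\|\underline u_n\|_{L^\infty(\Omega)}=\nu_n<\alpha_{n+1}$, and that the resulting solution stays strictly below $\alpha_{n+1}$, which is exactly what Lemma~\ref{lema:no_existencia} provides.
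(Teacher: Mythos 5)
Your proposal is correct and follows essentially the same route as the paper: define $\overline\lambda_n$ through the relation \eqref{eq:Lema_radial_igualdad} at $\nu=\nu_n$, use Lemma~\ref{lema:radial} to get a subsolution of sup-norm $\nu_n$, pass it through the sub/supersolution method with the constant supersolution $\alpha_{n+1}$ (excluding equality via Lemma~\ref{lema:no_existencia}), and bound $\overline\lambda_n$ by Lemma~\ref{lema:cota_superior_seno_u}. No gaps.
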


\begin{proof}
Let $R>0$ be as in the proof of Lemma~\ref{lema:radial} with $f_1(s)=\gamma\, s^r(1+\sin s)$, where $\gamma = \inf_{s\geq 0}g(s)>0$. From relation~\eqref{eq:Lema_radial_igualdad} of Lemma~\ref{lema:radial}, we define for each $\nu_n$ the number $\overline{\lambda}_n$ given by
\begin{equation}
\label{eq:Dem_prop_cota_sup_seno_u}
\hat f_1(\nu_n) = R\sqrt{\overline \lambda_n} \, \frac{2^{N-2}-1}{2^{N-\frac12}},
\end{equation}
where $\hat f_1$ is defined in~\eqref{eq:def_g}. Since $f_1(s)\leq s^r(1+\sin s)g(s)=f(s)$, by Lemma~\ref{lema:radial}, there exists a nonnegative weak subsolution $\underline{u}_n$ of $(Q_{\overline{\lambda}_n})$ such that $\|\underline{u}_n\|_{L^\infty(\Omega)} = \nu_n$. Since $\nu_n<\alpha_{n+1}$ (recall that $\alpha_n$ is defined in~\eqref{eq:def_alpha_n_seno_u}), we can apply the sub and supersolution method with $\underline{u}_n$ as subsolution and $\alpha_{n+1}$ as supersolution to guarantee the existence of a solution $u_n$ of $(Q_{\overline{\lambda}_n})$ such that $\underline{u}_n \leq u_n \leq \alpha_{n+1}$. It follows from Lemma~\ref{lema:no_existencia} that $\alpha_{n}< \|\underline{u}_n\|_{L^\infty(\Omega)} \leq \|u_n\|_{L^\infty(\Omega)} < \alpha_{n+1}$. 

From the definition~\eqref{eq:def_lambda_n} of $\lambda_n$, we get that $\lambda_n \leq \overline{\lambda}_n$ for every $n\in\na$. Finally, we can apply Lemma~\ref{lema:cota_superior_seno_u} to deduce from this inequality and from~\eqref{eq:Dem_prop_cota_sup_seno_u} that
\[
\lambda_n \leq \overline{\lambda}_n = C\hat f_1(\nu_n)^2 \leq c \nu_n^{1-r},\ \forall n\geq n_0,
\]
where $C>0$ is a constant and $n_0$ is given by Lemma~\ref{lema:cota_superior_seno_u}
\end{proof}

Once lower and upper bounds for $\{\lambda_n\}$ have been obtained, it is easy to estimate its asymptotic behavior.

\begin{proposition}
\label{prop:comp_asintotico_seno_u}
 Asumme that \eqref{hipg} holds true and consider the problem~\eqref{eq:Problema_seno_u}. The sequence $\{\lambda_n\}$ defined in~\eqref{eq:def_lambda_n} behaves as $n^{1-r}$ when $n$ tends to $\infty$, i.e., there exist some $c_1,c_2>0$ such that
\[
c_1 \leq \liminf_{n\to\infty}\frac{\lambda_n}{n^{1-r}} \leq \limsup_{n\to\infty}\frac{\lambda_n}{n^{1-r}} \leq c_2.
\]
As a consequence,
\begin{enumerate}[i)]
    \item if $0\leq r<1$, then $\lambda_n\to +\infty$,
    \item if $r=1$, then $c_1-\varepsilon\leq \lambda_n\leq c_2+\varepsilon$ for every $\varepsilon>0$ and $n$ large enough,
    \item  if $r>1$, then $\lambda_n\to 0$.
\end{enumerate}
\end{proposition}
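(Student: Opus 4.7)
The plan is to combine directly the lower bound of Proposition~\ref{prop:cota_inferior_seno_u} with the upper bound of Proposition~\ref{prop:cota_superior_seno_u}, and then to observe that both the sequence $\{\alpha_n\}$ given by~\eqref{eq:def_alpha_n_seno_u} and the sequence $\{\nu_n\}$ given by~\eqref{eq:def_M_n_seno_u} are of order $n$ as $n\to\infty$.

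\textbf{Lower bound.} From Proposition~\ref{prop:cota_inferior_seno_u} there exists $C_1>0$ with
\[
\lambda_n \geq C_1 \frac{\alpha_n}{\alpha_{n+1}^r}, \quad \forall n\in\na.
\]
Since $\alpha_n = -\frac{\pi}{2}+2\pi n$, we have $\alpha_n/n\to 2\pi$ and $\alpha_{n+1}^r/n^r\to (2\pi)^r$, so
\[
\liminf_{n\to\infty} \frac{\lambda_n}{n^{1-r}} \geq C_1 \lim_{n\to\infty} \frac{\alpha_n/n}{(\alpha_{n+1}/n)^r} = C_1 (2\pi)^{1-r} =: c_1>0.
\]

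\textbf{Upper bound.} From Proposition~\ref{prop:cota_superior_seno_u} there exist $n_0\in\na$ and $C_2>0$ with $\lambda_n\leq C_2\nu_n^{1-r}$ for every $n\geq n_0$. Since $\nu_n=\frac{\pi}{2}+2\pi n$ satisfies $\nu_n/n\to 2\pi$, we obtain
\[
\limsup_{n\to\infty} \frac{\lambda_n}{n^{1-r}} \leq C_2 \lim_{n\to\infty}\left(\frac{\nu_n}{n}\right)^{1-r} = C_2(2\pi)^{1-r} =: c_2.
\]

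\textbf{Consequences.} The three cases are immediate from the two-sided estimate $c_1 n^{1-r}\leq \lambda_n\leq c_2 n^{1-r}$ (valid up to lower order terms): if $0\leq r<1$, the exponent $1-r$ is positive, so $\lambda_n\to+\infty$; if $r=1$, then $n^{1-r}=1$ and $\lambda_n$ is trapped between $c_1$ and $c_2$ up to an arbitrarily small error for $n$ large; if $r>1$, the exponent is negative, so $\lambda_n\to 0$.

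The only nontrivial ingredients are Propositions~\ref{prop:cota_inferior_seno_u} and~\ref{prop:cota_superior_seno_u}, which are already established; the remaining argument is the elementary comparison of $\alpha_n$ and $\nu_n$ with $n$.
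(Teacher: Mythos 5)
Your proposal is correct and follows essentially the same route as the paper: both combine Proposition~\ref{prop:cota_inferior_seno_u} and Proposition~\ref{prop:cota_superior_seno_u} and then use that $\alpha_n$, $\alpha_{n+1}$ and $\nu_n$ are all asymptotic to $2\pi n$, yielding the same constants $c_1=C_1(2\pi)^{1-r}$ and $c_2=C_2(2\pi)^{1-r}$. The deduction of the three cases from the liminf/limsup bounds is likewise the intended one.
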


\begin{proof}
The first inequality is an easy consequence of Proposition~\ref{prop:cota_inferior_seno_u} since
\[
\liminf_{n\to\infty} \frac{\lambda_n}{n^{1-r}} \geq C_1 \lim_{n\to\infty} \frac{\alpha_n}{n^{1-r}\alpha_{n+1}^r} = C_1 \lim_{n\to\infty} \frac{2\pi n - \frac{\pi}{2}}{n^{1-r} \left(2\pi n + \frac{3\pi}{2}\right)^r} = \frac{C_1}{(2\pi)^{r-1}}.
\]
The last inequality follows from Proposition~\ref{prop:cota_superior_seno_u} because
\[
\limsup_{n\to\infty}\frac{\lambda_n}{n^{1-r}} \leq C_2 \lim_{n\to \infty} \frac{\nu_n^{1-r}}{n^{1-r}} = C_2 \lim_{n\to \infty} \frac{\left(2\pi n + \frac{\pi}{2} \right)^{1-r}}{n^{1-r}} = \frac{C_2}{(2\pi)^{r-1}}.
\]
\end{proof}

Now we can easily prove Theorem~\ref{teor:seno_u}.

\begin{proof}[Proof of Theorem~\ref{teor:seno_u}]
Let $\{\lambda_n\}$ be the sequence defined in~\eqref{eq:def_lambda_n}. Regardless of the value of $r$, see that for every $m\in\na$ we can always take $\tilde{\lambda}_m = \max\{\lambda_1, \dots, \lambda_m\}$ and the very definition of the sequence $\{\lambda_n\}$ implies that $n(\lambda) \geq m$ for each $\lambda\geq \lambda_m$. This is essentially a consequence of Lemma~\ref{lema:existencia}.

On the other hand, the nonexistence of bifurcation points from infinity, as well as the existence of bifurcation points from infinity which are not branching points, is an immediate consequence of Proposition~\ref{prop:comp_asintotico_seno_u} and Proposition~\ref{prop:bifurcacion}.
\end{proof}

\subsection{The problem \eqref{eq:Problema_general} with $f$ having a vanishing sequence of zeros.}\hfill
\label{sec:seno_1/u}

%Now, we analyze an example in which the sequence of positive zeros of $f$  tends to zero.

 In this subsection we consider $f(t)=t^r\left(1+\sin \frac{1}{t}\right)g(t)$ for some $r > 0$ and $g$ satisfying \eqref{hipg} i.e., we study the problem
\begin{equation}
    \tag{$R_\lambda$}
    \begin{cases}
   -\Delta u = \lambda u^r \left(1+\sin \frac{1}{u}\right)g(u) &\text{ in } \Omega,\\
   u=0 &\text{ on } \partial\Omega.
    \end{cases}
\end{equation}

The main purpose in this subsection is to prove Theorem~\ref{teor:seno_1/u}. The strategy is the same as in the previous subsection and therefore we will focus in the main differences. 

%. Some of the results of this subsection are analogous to those of the previous one. By this reason, we will not prove them and in each these results we will only indicate which proof of the previous subsection need to be adapted. 
%
Here, the sequence $\{\alpha_n\}$ of positive zeros of $f$, which is decreasing and convergent to zero, is given by:
\begin{equation}
    \label{eq:def_alpha_n_seno_1/u}
    \alpha_n = \frac{1}{-\frac{\pi}{2} + 2\pi n} ,\ \forall n\in\na.
\end{equation}
We observe that $f$ satisfies assumption~\ref{eq:hip_f_creciente_3} for $\sigma=\alpha_n$ and $n\in \mathbb{N}$, although for $r<2$ it does not verify~\ref{eq:hip_f_creciente}. %a condition of this type in intervals of the form $(0,\varepsilon)$ for $\varepsilon>0$. 
The result related to the lower bound of $\{\lambda_n\}$ is stated below. Its proof is analogous to that of Proposition~\ref{prop:cota_inferior_seno_u}.

\begin{proposition}
\label{prop:cota_inferior_seno_1/u}
Assume that \eqref{hipg} is satisfied. Let $\alpha_n$ be given by~\eqref{eq:def_alpha_n_seno_1/u} and let $\{\lambda_n\}$ be the sequence defined in~\eqref{eq:def_lambda_n} for the problem~\eqref{eq:Problema_seno_1/u} . Then there exists some $C>0$ independent from $n$ such that
\[
C\frac{\alpha_{n+1}}{\alpha_{n}^r} \leq \lambda_n,\ \forall n\in\na.
\]
%where $\{\alpha_n\}$ is the sequence of positive zeros of $f(t)=t^r\left(1+\sin \frac{1}{t}\right)$ defined in~\eqref{eq:def_alpha_n_seno_1/u}.
\end{proposition}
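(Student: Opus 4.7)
The plan is to mirror the proof of Proposition~\ref{prop:cota_inferior_seno_u}, with the only substantive change being that the sequence $\{\alpha_n\}$ in~\eqref{eq:def_alpha_n_seno_1/u} is now \emph{decreasing} and converges to zero, so that $\min\{\alpha_n,\alpha_{n+1}\}=\alpha_{n+1}$ and $\max\{\alpha_n,\alpha_{n+1}\}=\alpha_n$. As a consequence, the roles of $\alpha_n$ and $\alpha_{n+1}$ in the bounds will be swapped with respect to the previous subsection, which is precisely what the stated inequality reflects.

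First, by the definition~\eqref{eq:def_lambda_n} of $\lambda_n$ together with Lemma~\ref{lema:existencia}, for each $n\in\na$ I would pick a solution $u_n\in\huz\cap \mathcal{C}(\overline{\Omega})$ of $(R_{\lambda_n})$ satisfying
\begin{equation*}
\alpha_{n+1} < \|u_n\|_{L^\infty(\Omega)} < \alpha_n.
\end{equation*}
Then I would test the equation against $G_k(u_n)=\max\{u_n-k,\min\{u_n+k,0\}\}\in\huz$ for $k>0$. Since $0\le u_n\le\alpha_n$ a.e., we have $u_n^r\le\alpha_n^r$, while $1+\sin(1/u_n)\le 2$ and $g(u_n)\le \sup_{s\ge 0}g(s)$ by \eqref{hipg}. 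This yields
\begin{equation*}
\into |\nabla G_k(u_n)|^2 \le 2\sup_{s\ge 0}g(s)\,\alpha_n^r\,\lambda_n \into G_k(u_n).
\end{equation*}

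Applying Stampacchia's Theorem (as in Proposition~\ref{prop:cota_inferior_seno_u}) produces a constant $C=C(N,\Omega)>0$ with $\|u_n\|_{L^\infty(\Omega)}\le C\alpha_n^r\lambda_n$. Combining this with the lower bound $\alpha_{n+1}<\|u_n\|_{L^\infty(\Omega)}$ gives $\alpha_{n+1}\le C\alpha_n^r\lambda_n$, which after rearrangement is exactly the claimed inequality $C\,\alpha_{n+1}/\alpha_n^r\le\lambda_n$. There is no real obstacle: the only subtlety is keeping track of the inversion of the order of the zeros, which ensures that the right pointwise bound on $u_n^r$ involves $\alpha_n$ (the larger of the two zeros) rather than $\alpha_{n+1}$.
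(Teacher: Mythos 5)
Your argument is correct and is exactly the one the paper intends: the paper omits this proof, stating only that it is analogous to that of Proposition~\ref{prop:cota_inferior_seno_u}, and your write-up carries out that analogy faithfully, with the only change being the swap of $\alpha_n$ and $\alpha_{n+1}$ forced by the fact that the zeros now decrease to $0$. The pointwise bound $0\le u_n\le\alpha_n$, the test function $G_k(u_n)$, and the appeal to Stampacchia's theorem all match the template, so there is nothing to add.
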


Although the strategy for proving the upper bound of $\{\lambda_n\}$ is the same, the calculations are somewhat more complicated. We first need a technical lemma related to $f_1(s)=s^r\left(1+\sin \frac{1}{s}\right)$.

\begin{lemma}
\label{lema:bachillerato}
If $r>-2$, then
\[
\left| \int_0^\nu t^r \sin\left( \frac{1}{t}\right) \, \mathrm{d}t \right| \leq (\pi+2) \nu^{r+2},\ \forall \nu>0.
\]
\end{lemma}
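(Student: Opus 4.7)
The plan is to exploit the identity
\[
\frac{d}{dt}\cos\!\left(\frac{1}{t}\right) = \frac{\sin(1/t)}{t^{2}}, \qquad t>0,
\]
so that the oscillatory factor $\sin(1/t)$ becomes a total derivative up to a polynomial weight. Rewriting the integrand as
\[
t^{r}\sin\!\left(\frac{1}{t}\right) = t^{r+2}\,\frac{d}{dt}\cos\!\left(\frac{1}{t}\right),
\]
I would integrate by parts on $[\varepsilon,\nu]$ and then let $\varepsilon\to 0^{+}$. This produces a boundary term at $\nu$ plus a remainder integral which is no longer oscillatory in a problematic way.

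More concretely, integration by parts on $[\varepsilon,\nu]$ gives
\[
\int_{\varepsilon}^{\nu} t^{r}\sin\!\left(\frac{1}{t}\right)\,\mathrm{d}t
= \nu^{r+2}\cos\!\left(\frac{1}{\nu}\right) - \varepsilon^{r+2}\cos\!\left(\frac{1}{\varepsilon}\right) - (r+2)\int_{\varepsilon}^{\nu} t^{r+1}\cos\!\left(\frac{1}{t}\right)\,\mathrm{d}t.
\]
The hypothesis $r>-2$ ensures $r+2>0$, which makes $\varepsilon^{r+2}\cos(1/\varepsilon)\to 0$ as $\varepsilon\to 0^{+}$ and guarantees the remaining integral is absolutely convergent on $(0,\nu]$ since $\int_{0}^{\nu}t^{r+1}\,\mathrm{d}t = \nu^{r+2}/(r+2)<\infty$. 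Passing to the limit therefore legitimizes the formula
\[
\int_{0}^{\nu} t^{r}\sin\!\left(\frac{1}{t}\right)\mathrm{d}t
= \nu^{r+2}\cos\!\left(\frac{1}{\nu}\right) - (r+2)\int_{0}^{\nu} t^{r+1}\cos\!\left(\frac{1}{t}\right)\mathrm{d}t.
\]

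Finally, I would bound each term trivially using $|\cos(1/t)|\leq 1$:
\[
\left| \int_{0}^{\nu} t^{r}\sin\!\left(\frac{1}{t}\right)\mathrm{d}t \right|
\leq \nu^{r+2} + (r+2)\int_{0}^{\nu} t^{r+1}\,\mathrm{d}t
= \nu^{r+2} + \nu^{r+2} = 2\,\nu^{r+2},
\]
which is even stronger than the claimed bound $(\pi+2)\nu^{r+2}$. There is no serious obstacle here; the only point demanding care is verifying that the boundary term at $0$ indeed vanishes (a consequence of $r+2>0$ together with the boundedness of $\cos(1/t)$) so that the improper integration by parts is rigorously justified.
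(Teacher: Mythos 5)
Your proof is correct, and it takes a genuinely different route from the paper. You integrate by parts using the identity $\frac{d}{dt}\cos\left(\frac{1}{t}\right)=\frac{\sin(1/t)}{t^{2}}$, which converts the oscillatory integral into a single boundary term at $\nu$ plus an absolutely convergent remainder $\int_0^\nu t^{r+1}\cos(1/t)\,\mathrm{d}t$; the hypothesis $r>-2$ enters exactly twice, to kill the boundary term at $0$ and to make $\int_0^\nu t^{r+1}\,\mathrm{d}t=\nu^{r+2}/(r+2)$ finite. The paper instead substitutes $s=1/t$ to rewrite the integral as $\int_{1/\nu}^{+\infty} s^{-(r+2)}\sin s\,\mathrm{d}s$, splits it at multiples of $\pi$, and controls the tail with the alternating series test, which is what produces the constant $\pi+2$. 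Your argument is shorter, avoids the series machinery, and yields the sharper constant $2\leq \pi+2$, so the stated inequality follows a fortiori. A further small virtue of your approach, which you implicitly handle by working on $[\varepsilon,\nu]$ and passing to the limit, is that for $-2<r\leq -1$ the integral is only conditionally convergent, and the integration by parts simultaneously establishes its existence and the bound; the paper's change of variables accomplishes the same via Dirichlet-type cancellation in the alternating series.
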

\begin{proof}
Fix $\nu>0$. We perform the change of variables $s=\frac{1}{t}$ and we get
\begin{equation}
\label{eq:Dem_lema_bach_1}
\int_0^\nu t^r \sin\left( \frac{1}{t}\right) \, \mathrm{d}t = \int_{\frac{1}{\nu}}^{+\infty} \frac{\sin s}{s^{r+2}}\, \mathrm{d}s.
\end{equation}
Let $N=\min\left\{n\in\na: \pi n \geq \frac{1}{\nu} \right\}$. We split the above integral as
\[
\int_{\frac{1}{\nu}}^{+\infty} \frac{\sin s}{s^{r+2}}\, \mathrm{d}s = \int_{\frac{1}{\nu}}^{\pi N} \frac{\sin s}{s^{r+2}}\, \mathrm{d}s + \sum_{k=N}^{+\infty} \int_{\pi k}^{\pi (k+1)} \frac{\sin s}{s^{r+2}}\, \mathrm{d}s.
\]
For each integral of the above summation, we perform the change of variables $\eta = \frac{s-k\pi}{\pi}$, obtaining that
\begin{equation}
\label{eq:Dem_lema_bach_2}
    \begin{split}
    \int_{\frac{1}{\nu}}^{+\infty} \frac{\sin s}{s^{r+2}}\, \mathrm{d}s &= \int_{\frac{1}{\nu}}^{\pi N} \frac{\sin s}{s^{r+2}}\, \mathrm{d}s + \sum_{k=N}^{+\infty}\frac{1}{\pi^{r+1}} \int_0^1 \frac{\sin (\pi\eta+ \pi k)}{(\eta+k)^{r+2}}\, \mathrm{d}\eta \\
    &= \int_{\frac{1}{\nu}}^{\pi N} \frac{\sin s}{s^{r+2}}\, \mathrm{d}s + \frac{1}{\pi^{r+1}} \sum_{k=N}^{+\infty} (-1)^k \int_0^1 \frac{\sin (\pi\eta)}{(\eta+k)^{r+2}}\, \mathrm{d}\eta.
    \end{split}
\end{equation}

On the one hand, we have
\begin{equation}
\label{eq:Dem_lema_bach_3}
\left| \int_{\frac{1}{\nu}}^{\pi N} \frac{\sin s}{s^{r+2}}\, \mathrm{d}s \right| \leq \nu^{r+2} \int_{\frac{1}{\nu}}^{\pi N} |\sin s| \leq \nu^{r+2} \int_{\pi(N-1)}^{\pi N} |\sin s|\leq \pi \nu^{r+2}.
\end{equation}

On the other hand, the sequence $\{a_k\}$ of positive numbers defined by
\[
a_k = \int_0^1 \frac{\sin (\pi\eta)}{(\eta+k)^{r+2}}\, \mathrm{d}\eta,\ \forall k\in\na
\]
converges to zero and is decreasing. Therefore we can apply the alternating series test to deduce that
\begin{equation}
\label{eq:Dem_lema_bach_4}
\begin{split}
\left| \sum_{k=N}^{+\infty} (-1)^k \int_0^1 \frac{\sin (\pi\eta)}{(\eta+k)^{r+2}}\, \mathrm{d}\eta \right| &\leq \int_0^1 \frac{\sin (\pi\eta)}{(\eta+N)^{r+2}}\, \mathrm{d}\eta \leq \frac{1}{N^{r+2}} \int_0^1 \sin(\pi \eta)\, \mathrm{d}\eta \\
&\leq (\pi \nu)^{r+2} \frac{2}{\pi} = 2\pi^{r+1} \nu^{r+2}.
\end{split}
\end{equation}

Combining~\eqref{eq:Dem_lema_bach_1},~\eqref{eq:Dem_lema_bach_2},~\eqref{eq:Dem_lema_bach_3} and~\eqref{eq:Dem_lema_bach_4} we get the proof finished.
\end{proof}

We now deal with the result which is similar to Lemma~\ref{lema:cota_superior_seno_u}. The proof in this case contains rather more tedious calculations. For this purpose, we define the points $\nu_n<\alpha_n$ in which $1+\sin \frac{1}{\nu_n}=2$, namely
\begin{equation}
    \label{eq:def_M_n_seno_1/u}
    \nu_n = \frac{1}{\frac{\pi}{2} + 2\pi n},\ \forall n\in\na.
\end{equation}
See that $\alpha_{n+1}<\nu_n<\alpha_{n}$ for every $n\in\na$, where $\{\alpha_n\}$ is defined in~\eqref{eq:def_alpha_n_seno_u}. We also observe now that $\nu_n\to 0$.

\begin{lemma}
\label{lema:cota_superior_seno_1/u}
Assume that \eqref{hipg} holds true and denote $\gamma = \inf_{s\geq 0}g(s)>0$. Let $f_1(t) = \gamma\,t^r\left(1+\sin \frac{1}{t}\right)$ with $r\geq 0$, $\nu_n$ defined by~\eqref{eq:def_M_n_seno_1/u} and let $\hat f_1$ be defined by~\eqref{eq:def_g}. Then there exists $n_0\in\na$ and $C>0$ independent of $n$ such that
\[
\hat f_1(\nu_n) \leq C \nu_n^\frac{1-r}{2},\ \forall n\geq n_0.
\]
%where $\hat f$ and $M_n$ are defined in~\eqref{eq:def_g} and~\eqref{eq:def_M_n_seno_1/u} respectively.
\end{lemma}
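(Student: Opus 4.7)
The proof follows the same blueprint as Lemma~\ref{lema:cota_superior_seno_u}. Setting $f_2(t) := t^r\bigl(1+\sin(1/t)\bigr) = \gamma^{-1}f_1(t)$, the plan is to introduce
\[
h_n(s) := \frac{\int_s^{\nu_n} f_2(t)\,dt}{2\nu_n^r(\nu_n-s)}, \qquad s\in[0,\nu_n),
\]
and to prove that there exist $C>0$ and $n_0\in\mathbb{N}$ such that $h_n(s)\ge C$ for every $s\in[0,\nu_n)$ and every $n\ge n_0$. Once this is established, the same string of equalities and inequalities used at the beginning of the proof of Lemma~\ref{lema:cota_superior_seno_u} immediately yields $\hat f_1(\nu_n)\le \sqrt{2/(C\gamma)}\,\nu_n^{(1-r)/2}$, which is the claim.

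The key identity that replaces the zero-tracking performed in Lemma~\ref{lema:cota_superior_seno_u} comes from the equalities $\sin(1/\nu_n)=1$ and $\cos(1/\nu_n)=0$, both consequences of $1/\nu_n=\pi/2+2\pi n$. The sine addition formula gives, for every $t>0$,
\[
1+\sin(1/t) \;=\; 1+\cos\!\left(\frac{1}{t}-\frac{1}{\nu_n}\right) \;=\; 1+\cos\!\left(\frac{\nu_n-t}{t\nu_n}\right),
\]
which recenters the integrand around its maximum at $t=\nu_n$ and suggests the change of variables $u=1/t-1/\nu_n$, with $t=1/(u+1/\nu_n)$ and $dt=-t^2\,du$.

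I would then split $[0,\nu_n)$ into $[0,\nu_n/2]$ and $[\nu_n/2,\nu_n)$. On the first subinterval the argument is a direct adaptation of the one in Lemma~\ref{lema:cota_superior_seno_u}: Lemma~\ref{lema:bachillerato} gives $\bigl|\int_s^{\nu_n} t^r\sin(1/t)\,dt\bigr|\le 2(\pi+2)\nu_n^{r+2}$, while $(\nu_n^{r+1}-s^{r+1})/(r+1)\ge \nu_n^{r+1}(1-2^{-(r+1)})/(r+1)$ for $s\le\nu_n/2$; since $\nu_n\to 0$, for $n$ large the main term dominates the $O(\nu_n^{r+2})$ error and $h_n(s)\ge C_1$. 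On the second subinterval the change of variables turns the integral into
\[
\int_s^{\nu_n} f_2(t)\,dt \;=\; \int_0^U t^{r+2}(1+\cos u)\,du,\qquad U := \tfrac{1}{s}-\tfrac{1}{\nu_n},
\]
and the bound $s\ge\nu_n/2$ forces $U\le 1/\nu_n$, so $t\in[\nu_n/2,\nu_n]$ for every $u\in[0,U]$ and hence $t^{r+2}\ge(\nu_n/2)^{r+2}$. The elementary inequality $U+\sin U\ge U/2$ for every $U\ge 0$ (true on $[0,\pi]$ since $\sin U\ge 0$, and on $[2,\infty)$ since $\sin U\ge -1$ gives $U+\sin U\ge U-1\ge U/2$; these intervals cover $[0,\infty)$ as $\pi>2$) yields $\int_0^U(1+\cos u)\,du\ge U/2$, and finally $U\ge (\nu_n-s)/\nu_n^2$ produces $h_n(s)\ge 2^{-(r+4)}$ uniformly in $n$.

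The main obstacle is precisely this second regime. Within an $O(\nu_n^2)$-window around $\nu_n$ the factor $1+\sin(1/t)$ already completes one or more full oscillations, so the bound from Lemma~\ref{lema:bachillerato} is of order $\nu_n^{r+2}$, merely matching (rather than dominating) the main term $\nu_n^r(\nu_n-s)$, and the first-regime strategy collapses. The observation $\sin(1/t)=\cos(1/t-1/\nu_n)$ together with the variable $u=1/t-1/\nu_n$ exploits the fact that $\nu_n$ is placed exactly at a maximum of the oscillation, reducing the delicate estimate to the transparent positivity of $u+\sin u$ on $[0,\infty)$.
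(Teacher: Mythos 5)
Your argument is correct, and on the delicate part it takes a genuinely different and cleaner route than the paper. Both proofs reduce the lemma to the uniform lower bound $h_n(s)\ge C$ for the same quotient $h_n$, and both handle the region away from $\nu_n$ by playing the main term $(\nu_n^{r+1}-s^{r+1})/(r+1)$ against the $O(\nu_n^{r+2})$ oscillatory error from Lemma~\ref{lema:bachillerato} (your cutoff at $\nu_n/2$ versus the paper's at $\nu_n-c_r\nu_n^2$ is immaterial there). The difference is in the region near $\nu_n$: the paper assumes an interior minimum $s_0$ of $h_n$, uses $h_n'(s_0)=0$ to write $h_n(s_0)=f_2(s_0)/(2\nu_n^r)$, and then must rule out that $s_0$ lies in a $\delta$-neighbourhood of a zero of $1+\sin\frac1s$ by a sign computation for $h_n'$ involving the constants $C_\delta^{(1)},C_\delta^{(2)}$ and a limiting argument in $n$ and $\delta$. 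You instead exploit fully the fact that $1/\nu_n=\pi/2+2\pi n$ places $\nu_n$ exactly at a maximum of the oscillation, so that $1+\sin\frac1t=1+\cos\bigl(\frac1t-\frac1{\nu_n}\bigr)$; the substitution $u=\frac1t-\frac1{\nu_n}$ then converts $\int_s^{\nu_n}f_2$ into $\int_0^U t^{r+2}(1+\cos u)\,du$ with $t\in[\nu_n/2,\nu_n]$, and the elementary bound $U+\sin U\ge U/2$ together with $U\ge(\nu_n-s)/\nu_n^2$ gives the explicit constant $h_n(s)\ge 2^{-(r+4)}$ on all of $[\nu_n/2,\nu_n)$, uniformly in $n$, with no case analysis and no critical-point discussion. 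Each step checks out (in particular $t^{r+2}\ge(\nu_n/2)^{r+2}$ on the relevant range and the covering $[0,\pi]\cup[2,\infty)=[0,\infty)$ for the inequality $U+\sin U\ge U/2$), and the final passage from $h_n\ge C$ to $\hat f_1(\nu_n)\le\sqrt{2/(\gamma C)}\,\nu_n^{(1-r)/2}$ is the same computation as in Lemma~\ref{lema:cota_superior_seno_u}. Your approach buys explicit constants and a shorter proof; the paper's critical-point method is the one that transfers more directly from the $\sin u$ case of Lemma~\ref{lema:cota_superior_seno_u}, where no such exact recentering identity is available in closed form over the whole interval.
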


\begin{proof} 
As shown in the proof of Lemma~\ref{lema:cota_superior_seno_u}, to prove this result it suffices to prove the following claim: there exists $n_0\in\na$ and $C>0$ such that
\begin{equation}
    \label{eq:Dem_lema_cota_sup_seno_1/u_6}
   h_n(s)\equiv \frac{\int_s^{\nu_n} f_2(t) \, \mathrm{d}t}{2\nu_n^r(\nu_n-s)} \geq C,\ \forall s\in [0,\nu_n[,\ \forall n\geq n_0,
\end{equation}
where $f_2(t) \equiv \gamma^{-1} f_1(t) = t^r\left(1+\sin \frac{1}{t}\right)$.

We easily deduce again that
\begin{equation}
    \label{eq:Dem_lema_cota_sup_seno_1/u_1}
    \lim_{s\to \nu_n} h_n(s) = \lim_{s\to \nu_n} \frac{-f_2(s)}{-2\nu_n^r} = \frac{f_2(\nu_n)}{2\nu_n^r} = 1,\ \forall n\in\na.
\end{equation}

Now we prove that 
\begin{equation}
    \label{eq:Dem_lema_cota_sup_seno_1/u_2}
    h_n(s)\geq C,\ \forall s\in[0, \nu_n-c_r \nu_n^2],
\end{equation}
for some $c_r>0$. 

Taking into account Lemma~\ref{lema:bachillerato} we deduce that
\begin{align*}
\left| \int_s^{\nu_n} f_2(t) \, \mathrm{d}t \right| & =\left| \frac{1}{r+1}(\nu_n^{r+1} - s^{r+1}) + \int_s^{\nu_n} t^r\sin \frac{1}{t}\, \mathrm{d}t \right|
\\
& \leq  \frac{1}{r+1}(\nu_n^{r+1} - s^{r+1}) + 2(\pi+2)\nu_n^{r+2}.
\end{align*}
%
%\[
%\int_s^{M_n} f(t) \, \mathrm{d}t = \frac{1}{r+1}(M_n^{r+1} - s^{r+1}) + \int_s^{M_n} t^r\sin \frac{1}{t}\, \mathrm{d}t,
%\]
%and that by Lemma~\ref{lema:bachillerato}
%\[
%\left|\int_s^{M_n} t^r\sin \frac{1}{t}\, \mathrm{d}t \right| = \left|\int_0^{M_n} t^r\sin \frac{1}{t}\, \mathrm{d}t - \int_0^{s} t^r\sin \frac{1}{t}\, \mathrm{d}t \right| \leq 2(\pi+2)M_n^{r+2} := c_0 M_n^{r+2},
%\]
Therefore we deduce that
\begin{align*}
h_n(s) &= \frac{1}{2(r+1)} \left[1+ \frac{s(\nu_n^r-s^r)}{\nu_n^r(\nu_n-s)}\right] + \frac{1}{2\nu_n^r(\nu_n-s)} \int_s^{\nu_n} t^r\sin \frac{1}{t}\, \mathrm{d}t\\
&\geq \frac{1}{2(r+1)} - \frac{2(\pi+2) \nu_n^{r+2}}{2\nu_n^r(\nu_n-s)}.
\end{align*}
In particular, we get that $h_n(s)\geq C>0$ whenever 
\[
s < \nu_n- \frac{2(\pi+2) \nu_n^2}{\frac{1}{r+1}-2C}
\]
being $C$ a positive constant small enough. Thus, we can choose $c_r=2(\pi+2)(r+1+\varepsilon)$ with $\varepsilon>0$ small. 
%such that
%\begin{equation}
%    \label{eq:Dem_lema_cota_sup_seno_1/u_2}
%    h_n(s)\geq C,\ \forall s\in[0, M_n-c_r M_n^2].
%\end{equation}

It remains only to show that 
\begin{equation}
    \label{eq:Dem_lema_cota_sup_seno_1/u_5}
    h_n(s)\geq C,\ \forall s\in[\nu_n-c_r \nu_n^2, \nu_n[.
\end{equation}
%$h_n(s)\geq C$ in the interval $[M_n-c_1 M_n^2, M_n[$. 
Observe again that if $h_n$ does not have any interior minimum point in that interval, then this is a direct consequence  of~\eqref{eq:Dem_lema_cota_sup_seno_1/u_1} and~\eqref{eq:Dem_lema_cota_sup_seno_1/u_2}. We assume now that $h_n$ has some minimum critical point $s_0\in ]\nu_n-c_r\nu_n^2, \nu_n[$ (observe that $s_0$ depends on $n$). In what follows we are going to prove that $h_n(s_0)\geq C$ for $n$ large, and this will imply that $h_n(s) \geq C$ for $s\in [\nu_n-c_r \nu_n^2, \nu_n[$ and $n$ large.

We first observe that $h_n'(s_0)=0$ i.e.
\[
\int_{s_0}^{\nu_n} f_2(t)\, \mathrm{d}t = f_2(s_0) (\nu_n-s_0).
\]
Thus $s_0$ satisfies that
\begin{equation*}
    h_n(s_0) = \frac{f_2(s_0)}{2\nu_n^r} = \frac{s_0^r \left(1+\sin \frac{1}{s_0} \right)}{2\nu_n^r} \geq \frac{1+\sin \frac{1}{s_0}}{2} \frac{\left(\nu_n-c_r \nu_n^2\right)^r}{\nu_n^r} > C \frac{1+\sin \frac{1}{s_0}}{2}
\end{equation*}
for $n$ large since $\nu_n\to 0$. 
%and this implies that
%\begin{equation}
%    \label{eq:Dem_lema_cota_sup_seno_1/u_5}
%    h_n(s)\geq C,\ \forall s\in[M_n-c_1 M_n^2, M_n[.
%\end{equation}
Let us finally prove that $1+\sin \frac1{s_0}$ is bounded below by a positive constant. This is clear if the set of zeros of the function $1+\sin \frac1s$ does not intersect the interval $[\nu_n-c_r\nu_n^2, \nu_n[$. Otherwise, this set of zeros is given by 
\[
\left\{\alpha_k^{(n)} \equiv \frac{1}{\frac{1}{\nu_n}+\pi+2\pi k},\ \forall k=0,\dots,k_n\right\},
\]
%First of all, we see that $s_0$ cannot be in an open neighbourhood of the points $\alpha_k^{(n)}$ of the interval $[M_n-c_1 M_n^2, M_n[$ in which $1+\sin \frac{1}{\alpha_k^{(n)}} = 0$. Specifically, these points are
%\[
%\alpha_k^{(n)} = \frac{1}{\frac{1}{M_n}+\pi+2\pi k},\ \forall k=0,\dots,k_n,
%\]
where $k_n = \max\{k\in\na: \alpha_k^{(n)} > \nu_n-c_r \nu_n^2\}$. For $\delta>0$ small, we also define
\[
\beta_{k,\delta}^{(n)} = \frac{\alpha_k^{(n)}}{1+\delta  \alpha_k^{(n)}}, \quad \beta_{k,-\delta}^{(n)} = \frac{\alpha_k^{(n)}}{1-\delta  \alpha_k^{(n)}}.
\]
We observe that $\beta_{k,\delta}^{(n)} < \beta_{k,-\delta}^{(n)}$ and that we have chosen these points in such a way that they verify that $\sin\frac{1}{\beta_{k,\delta}^{(n)}} = \sin\left( \frac{1}{\alpha_k^{(n)}} + \delta \right)$ and that $\sin\frac{1}{\beta_{k,-\delta}^{(n)}} = \sin\left( \frac{1}{\alpha_k^{(n)}} - \delta \right)$. To make the next calculations easier to read, we denote
\[
\beta_k := \beta_{k,-\delta}^{(n)} = \frac{\nu_n}{((1+2k)\pi-\delta)\nu_n + 1}.
\]

Let $\alpha\in ]\beta_{k,\delta}^{(n)}, \beta_{k,-\delta}^{(n)}[ \cap [\nu_n-c_r \nu_n^2, \nu_n[$. Since $\nu_n-\alpha < c_r \nu_n^2$  and $\beta_k<\beta_0$, then we have that
\begin{equation}
    \begin{split}
    \label{eq:Dem_lema_cota_sup_seno_1/u_3}
    h'_n(\alpha) &= \frac{-f_2(\alpha)(\nu_n-\alpha) + \int_\alpha^{\nu_n} f_2(t)\, \mathrm{d}t}{2\nu_n^r(\nu_n-\alpha)^2} 
    \geq \frac{-f_2 \left(\beta_0 \right) c_r \nu_n^2 + \int_{\beta_0}^{\nu_n} f_2(t)\, \mathrm{d}t}{2 c_r^2 \nu_n^{r+4}} \\
    &= \frac{-\left(1-\cos\delta\right) c_r \beta_0^r \nu_n^2 + \int_{\beta_0}^{\nu_n} f_2(t)\, \mathrm{d}t}{2 c_r^2 \nu_n^{r+4}} 
    = \frac{-C_\delta^{(1)} \beta_0^r \nu_n^2 + \int_{\beta_0}^{\nu_n} f_2(t)\, \mathrm{d}t}{2 c_r^2 \nu_n^{r+4}},
    \end{split}
\end{equation}
where $C_\delta^{(1)}$ is a positive constant such that $C_\delta^{(1)}\to 0$ when $\delta\to 0$. With respect to the last integral in~\eqref{eq:Dem_lema_cota_sup_seno_1/u_3}, doing the change of variables $\eta = \frac{1}{t}$ we deduce that
\begin{equation}
    \begin{split}
    \label{eq:Dem_lema_cota_sup_seno_1/u_4}
    \int_{\beta_0}^{\nu_n} f_2(t)\, \mathrm{d}t &= \int_{\beta_0}^{\nu_n} t^r \left(1+\sin \frac{1}{t} \right)\, \mathrm{d}t \geq \beta_0^r \int_{\beta_0}^{\nu_n} \left(1+\sin \frac{1}{t} \right)\, \mathrm{d}t\\
    &= \beta_0^r \int_{1/\nu_n}^{1/\beta_0} \frac{1}{\eta^2} (1+\sin \eta)\, \mathrm{d}t \geq \beta_0^{r+2} \int_{1/\nu_n}^{1/\beta_0} (1+\sin \eta)\, \mathrm{d}t \\
    &%= \beta_k^{r+2} \Big[\eta-\cos \eta\Big]_{\eta=1/\beta_k}^{\eta=1/M_n}
    = \beta_0^{r+2} \left(\frac{1}{\beta_0}-\frac{1}{\nu_n} - \cos\frac{1}{\beta_0} \right) = \beta_0^{r+2} \left(\frac{1}{\beta_0}-\frac{1}{\nu_n} - \sin\delta \right)\\
    &= \beta_0^{r+2} \left(\pi-\delta - \sin\delta \right) = C_\delta^{(2)} \beta_0^{r+2},
    \end{split}
\end{equation} 
where $C_\delta^{(2)}$ is a positive constant such that $C_\delta^{(2)}\to \pi$ when $\delta\to 0$. Combining~\eqref{eq:Dem_lema_cota_sup_seno_1/u_3} and~\eqref{eq:Dem_lema_cota_sup_seno_1/u_4}, we obtain that
\begin{equation*}
 h_n'(\alpha) \geq \frac{-C_\delta^{(1)} \beta_0^r \nu_n^2 + C_\delta^{(2)} \beta_0^{r+2}}{2 c_r^2 \nu_n^{r+4}} = \frac{-C_\delta^{(1)} [(\pi-\delta)\nu_n + 1]^2 + C_\delta^{(2)}}{2 c_r^2 [(\pi-\delta)\nu_n + 1]^{r+2}} \cdot \frac{1}{\nu_n^2},
\end{equation*}
so $h'(\alpha)\geq C$ as long as $\delta$ is small and $n$ large since $\nu_n\to 0$.

Thus, when $n$ is large we have proved that $s_0\notin \bigcup_{k=0}^{k_n} ]\beta_{k,\delta}^{(n)}, \beta_{k,-\delta}^{(n)}[$, which, in particular implies that $1+\sin \frac1{s_0}\geq C$ and we have \eqref{eq:Dem_lema_cota_sup_seno_1/u_5} proved. 

%Thus, when for $\delta$ small and $n$ large we have proved that $s_0\notin \bigcup_{k=0}^{k_n} ]\beta_{k,\delta}^{(n)}, \beta_{k,-\delta}^{(n)}[$, so $1+\sin \frac{1}{s_0}\geq C$. 
%
%
%See that $h_n'(s_0)=0$ since $s_0$ is a critical point of $h_n$, so it verifies that
%\[
%\int_{s_0}^{M_n} f(t)\, \mathrm{d}t = f(s_0) (M_n-s_0).
%\]
%In this way, $s_0$ satisfies that
%\begin{equation*}
%    h_n(s_0) = \frac{f(s_0)}{2M_n^r} = \frac{s_0^r \left(1+\sin \frac{1}{s_0} \right)}{2M_n^r} \geq \frac{1+\sin \frac{1}{s_0}}{2} \frac{\left(M_n-c_1 M_n^2\right)^r}{M_n^r} > C
%\end{equation*}
%for $n$ large since $M_n\to 0$ and this implies that
%\begin{equation}
%    \label{eq:Dem_lema_cota_sup_seno_1/u_5}
%    h_n(s)\geq C,\ \forall s\in[M_n-c_1 M_n^2, M_n[.
%\end{equation}

From~\eqref{eq:Dem_lema_cota_sup_seno_1/u_1},~\eqref{eq:Dem_lema_cota_sup_seno_1/u_2} and~\eqref{eq:Dem_lema_cota_sup_seno_1/u_5} we conclude that
\[
h_n(s) \geq C,\ \forall s\in [0,\nu_n[
\]
for $n$ large which proves the claim~\eqref{eq:Dem_lema_cota_sup_seno_1/u_6} and finishes the proof.
\end{proof}

Arguing as in Proposition~\ref{prop:cota_superior_seno_u} we deduce from Lemma~\ref{lema:cota_superior_seno_1/u} the following result.

\begin{proposition}
\label{prop:cota_superior_seno_1/u}
Assume that \eqref{hipg} is satisfied. Let $\nu_n$ be given by~\eqref{eq:def_M_n_seno_1/u} and let $\{\lambda_n\}$ be the sequence defined in~\eqref{eq:def_lambda_n} for the problem~\eqref{eq:Problema_seno_1/u}. Then there exist some $n_0\in\na$ and some $C>0$ independent from $n$ such that
\[
\lambda_n \leq C \nu_n^{1-r},\ \forall n\geq n_0.
\]
%where $M_n$ is defined in~\eqref{eq:def_M_n_seno_1/u} and $C>0$ is a constant independent from $n$.
\end{proposition}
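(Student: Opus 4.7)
The plan is to mirror the argument in Proposition~\ref{prop:cota_superior_seno_u}, making only the changes dictated by the different location and nature of the zeros of $f$. Set $\gamma = \inf_{s\geq 0} g(s) > 0$ and consider the minorant $f_1(s) = \gamma s^r \left( 1 + \sin \frac{1}{s}\right)$, which satisfies $f_1(s) \leq f(s)$ for all $s \geq 0$. Since $f_1(\nu_n) = 2\gamma \nu_n^r \neq 0$, Lemma~\ref{lema:radial} applies: for the $R>0$ fixed therein, define $\overline{\lambda}_n$ through the relation
\[
\hat f_1(\nu_n) = R\sqrt{\overline \lambda_n}\, \frac{2^{N-2}-1}{2^{N-\frac12}},
\]
so that there exists a nonnegative weak subsolution $\underline{u}_n$ of $(R_{\overline{\lambda}_n})$ with $\|\underline u_n\|_{L^\infty(\Omega)} = \nu_n$.

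Next I would produce a genuine solution by sandwiching $\underline{u}_n$ between a convenient supersolution. The only structural change with respect to the proof of Proposition~\ref{prop:cota_superior_seno_u} is that now the sequence $\{\alpha_n\}$ defined in~\eqref{eq:def_alpha_n_seno_1/u} is \emph{decreasing}, and one checks directly that $\alpha_{n+1} < \nu_n < \alpha_n$. Since $f(\alpha_n) = 0$ and $f\geq 0$, the constant function $\alpha_n$ is a supersolution of $(R_{\overline{\lambda}_n})$ lying above $\underline u_n$, so the sub and supersolution method (\cite{Clement-Sweers-2}) yields a solution $u_n$ of $(R_{\overline{\lambda}_n})$ with $\underline u_n \leq u_n \leq \alpha_n$. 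Lemma~\ref{lema:no_existencia} (applied with $\sigma = \alpha_n$ and $\sigma = \alpha_{n+1}$, for which \ref{eq:hip_f_creciente_3} holds) then forces
\[
\alpha_{n+1} < \|\underline u_n\|_{L^\infty(\Omega)} \leq \|u_n\|_{L^\infty(\Omega)} < \alpha_n,
\]
so $u_n$ falls inside the strip identifying $\Lambda_n$ in~\eqref{eq:def_Lambda_n}, hence $\lambda_n \leq \overline{\lambda}_n$.

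To close the argument, I would square the relation defining $\overline{\lambda}_n$ to write $\overline{\lambda}_n = C\, \hat f_1(\nu_n)^2$ with $C$ depending only on $R$ and $N$, and then invoke Lemma~\ref{lema:cota_superior_seno_1/u}, which gives $\hat f_1(\nu_n) \leq C \nu_n^{(1-r)/2}$ for every $n \geq n_0$. Combining these two facts yields $\lambda_n \leq \overline{\lambda}_n \leq C \nu_n^{1-r}$ for all $n \geq n_0$, which is exactly the desired bound.

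There is essentially no obstacle here: every nontrivial piece of the argument has already been isolated as a lemma. The delicate asymptotic estimate for $\hat f_1(\nu_n)$, which is the real work of the subsection, is absorbed into Lemma~\ref{lema:cota_superior_seno_1/u}; Lemma~\ref{lema:radial} supplies the subsolution of prescribed $L^\infty$-norm; and Lemma~\ref{lema:no_existencia} prevents the approximating solution from escaping the band $(\alpha_{n+1},\alpha_n)$. The only point requiring a moment's care is the reversed ordering $\alpha_{n+1} < \nu_n < \alpha_n$, so that the correct supersolution is $\alpha_n$ rather than $\alpha_{n+1}$.
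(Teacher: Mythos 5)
Your proposal is correct and follows exactly the route the paper intends: the paper itself gives no separate proof of this proposition, stating only that one argues as in Proposition~\ref{prop:cota_superior_seno_u} using Lemma~\ref{lema:cota_superior_seno_1/u}, which is precisely what you do. Your only addition — noting that the ordering is now $\alpha_{n+1}<\nu_n<\alpha_n$ so the supersolution is the constant $\alpha_n$ — is the correct adaptation.
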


Now we estimate the asymptotic behavior of $\{\lambda_n\}$ for problem~\eqref{eq:Problema_seno_1/u}. Its behavior compared to the one of the sequence $\{\lambda_n\}$ for problem~\eqref{eq:Problema_seno_u} is just the inverse (compare the following result with Proposition~\ref{prop:comp_asintotico_seno_u}).

\begin{proposition}
\label{prop:comp_asintotico_seno_1/u}
Assume that \eqref{hipg} holds true and consider the problem~\eqref{eq:Problema_seno_1/u}. The sequence $\{\lambda_n\}$ defined in~\eqref{eq:def_lambda_n} behaves as $n^{r-1}$ when $n$ tends to $\infty$, i.e., there exist some $c_1,c_2>0$ such that
\[
c_1 \leq \liminf_{n\to\infty}\frac{\lambda_n}{n^{r-1}} \leq \limsup_{n\to\infty}\frac{\lambda_n}{n^{r-1}} \leq c_2.
\]
As a consequence,
\begin{enumerate}[i)]
    \item if $0< r<1$, then $\lambda_n\to 0$,
    \item if $r=1$, then $c_1-\varepsilon\leq \lambda_n\leq c_2+\varepsilon$ for every $\varepsilon>0$ and $n$ large enough,
    \item  if $r>1$, then $\lambda_n\to +\infty$.
\end{enumerate}
\end{proposition}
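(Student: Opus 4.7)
The plan is to follow exactly the argument of Proposition~\ref{prop:comp_asintotico_seno_u}, combining the lower bound on $\{\lambda_n\}$ given by Proposition~\ref{prop:cota_inferior_seno_1/u} with the upper bound provided by Proposition~\ref{prop:cota_superior_seno_1/u}, and then compute the explicit limits using the formulas \eqref{eq:def_alpha_n_seno_1/u} for $\alpha_n$ and \eqref{eq:def_M_n_seno_1/u} for $\nu_n$.

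For the first inequality, I would use Proposition~\ref{prop:cota_inferior_seno_1/u} to write
\[
\liminf_{n\to\infty}\frac{\lambda_n}{n^{r-1}}\ \geq\ C_1\,\lim_{n\to\infty}\frac{\alpha_{n+1}}{n^{r-1}\alpha_n^{r}}
\ =\ C_1\,\lim_{n\to\infty}\frac{\left(-\tfrac{\pi}{2}+2\pi n\right)^{r}}{n^{r-1}\left(\tfrac{3\pi}{2}+2\pi n\right)}\ =\ C_1\,(2\pi)^{r-1},
\]
which gives the required $c_1>0$. For the second inequality, Proposition~\ref{prop:cota_superior_seno_1/u} yields
\[
\limsup_{n\to\infty}\frac{\lambda_n}{n^{r-1}}\ \leq\ C_2\,\lim_{n\to\infty}\frac{\nu_n^{1-r}}{n^{r-1}}\ =\ C_2\,\lim_{n\to\infty}\frac{\left(\tfrac{\pi}{2}+2\pi n\right)^{r-1}}{n^{r-1}}\ =\ C_2\,(2\pi)^{r-1},
\]
providing $c_2>0$. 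Setting $c_1=C_1(2\pi)^{r-1}$ and $c_2=C_2(2\pi)^{r-1}$ establishes the bracketing $c_1\leq \liminf \lambda_n/n^{r-1}\leq \limsup \lambda_n/n^{r-1}\leq c_2$.

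The three asymptotic cases are then an immediate consequence of this bracketing: when $0<r<1$ we have $n^{r-1}\to 0$ so $\lambda_n\to 0$; when $r=1$ the bracketing reads $c_1\leq \liminf \lambda_n\leq \limsup \lambda_n\leq c_2$, which is exactly the statement that $c_1-\varepsilon\leq \lambda_n\leq c_2+\varepsilon$ eventually for every $\varepsilon>0$; when $r>1$ we have $n^{r-1}\to+\infty$, which forces $\lambda_n\to+\infty$.

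There is no substantial obstacle here: all the technical work has been absorbed into Propositions~\ref{prop:cota_inferior_seno_1/u} and~\ref{prop:cota_superior_seno_1/u}, and what remains are the two elementary limit computations above. The only point worth a moment of care is that the sequence $\{\alpha_n\}$ is now decreasing and converges to $0$ (rather than diverging as in the case of \eqref{eq:Problema_seno_u}), which is precisely the reason the exponent of $n$ reverses sign from $1-r$ to $r-1$; this is already reflected in the lower bound $\alpha_{n+1}/\alpha_n^{r}$ of Proposition~\ref{prop:cota_inferior_seno_1/u} and in the fact that $\nu_n\to 0$.
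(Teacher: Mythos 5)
Your proof is correct and is exactly the argument the paper intends: the paper omits the proof of Proposition~\ref{prop:comp_asintotico_seno_1/u} as being the mirror image of Proposition~\ref{prop:comp_asintotico_seno_u}, and your computation of the two limits $\alpha_{n+1}/(n^{r-1}\alpha_n^{r})\to(2\pi)^{r-1}$ and $\nu_n^{1-r}/n^{r-1}\to(2\pi)^{r-1}$ from Propositions~\ref{prop:cota_inferior_seno_1/u} and~\ref{prop:cota_superior_seno_1/u} is precisely that argument. No issues.
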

%\textcolor{blue}{Esta demostraci\'on no la pondr\'ia pero la he dejado hecha por si quereis dejarla}
%\begin{proof}
%The first inequality is an easy consequence of Proposition~\ref{prop:cota_inferior_seno_1/u} since
%\[
%\liminf_{n\to\infty} \frac{\lambda_n}{n^{r-1}} \geq C_1 \lim_{n\to\infty} \frac{\alpha_{n+1}}{n^{r-1}\alpha_{n}^r} = C_1 \lim_{n\to\infty} \frac{\left(2\pi n - \frac{\pi}{2} \right)^r}{n^{r-1} \left(2\pi n + \frac{3\pi}{2}\right)} = C_1 (2\pi)^{r-1}.
%\]
%The last inequality follows from Proposition~\ref{prop:cota_superior_seno_1/u} because
%\[
%\limsup_{n\to\infty}\frac{\lambda_n}{n^{r-1}} \leq C_2 \lim_{n\to \infty} \frac{M_n^{1-r}}{n^{r-1}} = C_2 \lim_{n\to \infty} \frac{\left(2\pi n + \frac{\pi}{2} \right)^{r-1}}{n^{r-1}} = C_2 (2\pi)^{r-1}.
%\]
%\end{proof}

Now we can easily give the proof of Theorem~\ref{teor:seno_1/u} that it is essentially the same as the one of Theorem~\ref{teor:seno_u}.

\begin{proof}[Proof of Theorem~\ref{teor:seno_1/u}]
The nonexistence of bifurcation points from zero, as well as the existence of bifurcation points  from zero which are not branching points, are immediate consequences of Proposition~\ref{prop:comp_asintotico_seno_1/u} and Proposition~\ref{prop:bifurcacion}.
\end{proof}

\begin{remark}
\begin{enumerate}
 \item Both Theorem~\ref{teor:seno_u} and Theorem~\ref{teor:seno_1/u} are also true if we add a scale parameter inside the sine function. Due to the homogeneity of function $t^r$, this can be deduced from our results using standard scaling arguments.

\item  Both Theorem~\ref{teor:seno_u} and Theorem~\ref{teor:seno_1/u} show that, in problems~\eqref{eq:Problema_seno_u} and~\eqref{eq:Problema_seno_1/u}, when $\lambda$ belongs to the interior of the set of bifurcation points (either from infinity or from zero), the bifurcation is at the same time vertical, subcritical and supercritical. 
%In problem~\eqref{eq:Problema_seno_u}, this means that $\lambda\in \overline{(\mathcal{S}\setminus \mathcal{T}) \cap (\lambda\times\lio)}$ and, in problem~\eqref{eq:Problema_seno_1/u}, this means that $\lambda\in \overline{\mathcal{S}_\infty \cap (\lambda\times\lio)}$.
\end{enumerate}

\end{remark}

\section{Behavior of the solutions below the first positive zero or above the last one} 
\label{firstsecond}

Our aim is to complete the solution set diagram of~\eqref{eq:Problema_general}. Therefore, we study the existence of continua of solutions whose $\lio$-norms are below the first positive zero or above the last one. The shape of these continua will depend strongly on the behavior of the nonlinearity either at zero or at infinity.  We denote by $Z_f = \{z > 0: f(z)=0\}$, $\tilde \alpha \equiv \inf Z_f$ and $\tilde \beta \equiv \sup Z_f$, and we assume in this section that $\tilde\alpha$ and $\tilde\beta$ are positive and finite. 

We now detail all the possible cases, which of them are solved in the literature, which are treated by us in the present work, and, to the best of our knowledge, an open problem that is still pending. 

With respect to the behavior of solutions less than $\tilde\alpha$ we distinguish the case where $f(0)>0$ and the case where $f(0)=0$:

\begin{itemize}
    \item The case in which $f(0)>0$. It is studied in \cite{Amb-Hess}.  The existence of an unbounded continuum of positive solutions less than $\tilde\alpha$ emanating from $(0,0)$ is an easy consequence of~\cite[Theorem~3.2]{Rab}. Observe that there cannot be any bifurcation point from zero, since the zero function is not a solution of~\eqref{eq:Problema_general} when $\lambda>0$.
    
    \item In the case where $f(0)=0$ we have the following cases: 
    
     If the function $f$ is superlinear at zero, that is, $\lim_{s\to 0^+} \frac{f(s)}{s} = +\infty$, in~\cite{A-C-Pellacci}, the authors studied a problem more general than~\eqref{eq:Problema_general}, dealing with a quasilinear operator and nonlinearity that also depends on $x$. For the case at hand, they showed that problem~\eqref{eq:Problema_general} has an unbounded continuum of positive solutions less than $\tilde\alpha$ emanating from $(0,0)$ and that 0 is the only possible bifurcation point from zero. The strategy is to prove that at the bifurcation points a change of index occurs in order to apply the global bifurcation theorem of Rabinowitz~\cite[Theorem~1.3]{Rab}. 
    
     In the case in which $f$ is asymptotically linear at zero, that is, $\lim_{s\to 0^+} \frac{f(s)}{s} := m_0>0$, it is studied in \cite{Amb-Hess}. They showed that there is an unbounded continuum of positive solutions less than $\tilde\alpha$ emanating from $(\lambda_0, 0)$, where $\lambda_0=\frac{\lambda_1}{m_0}$, and this $\lambda_0$ is the only bifurcation point from zero. Here, the projection of the continuum on the $\lambda$-axis has the form $(\lambda_0, +\infty)$ or $[\lambda_{\tilde\alpha},+\infty)$ for some $\lambda_{\tilde\alpha}>0$.
     
     We study the case $f$ sublinear at zero in Subsection~\ref{superlinear} below.
\end{itemize}

With respect to the behavior of solutions with $L^\infty(\Omega)$-norm greater than $\tilde\beta$, this strongly depends on $f$: 
\begin{itemize}
    \item We study the case in which the function $f$ is sublinear at infinity in Subsection~\ref{sublinear} below.
    \item The case $f$ asymptotically linear at infinity, i.e. $\lim_{s\to+\infty}\frac{f(s)}{s} := m_\infty>0$ is studied in \cite{Amb-Hess}  where the authors proved the existence of an unbounded continuum of positive solutions meeting $(\lambda_\infty,\infty)$ with $\lio$-norm greater than $\tilde\beta$, where $\lambda_\infty:=\frac{\lambda_1}{m_\infty}$. Moreover, they showed that $\lambda_\infty$ is the only bifurcation point from infinity and that the projection of the continuum on the $\lambda$-axis has the form $(\lambda_\infty,+\infty)$ or $[\lambda_{\tilde\beta},+\infty)$ for some $\lambda_{\tilde\beta}>0$. This result is again a consequence of the global bifurcation theorem of Rabinowitz~\cite[Theorem~1.3]{Rab}. 
    \item When the function $f$ is superlinear at infinity, this is still an open problem that we describe in Subsection~\ref{openproblem} below.
\end{itemize}

In the next subsections, we deal with the cases where, up to our knowledge, there are no global results related with the existence of a continuum of positive solutions which give a global picture. 

Hereunder, we prove some results in this direction.

\subsection{The case in which $f(0)=0$ and $f$ is sublinear at zero. No bifurcation from zero. }
\label{superlinear}
\mbox{ }

In this case, we prove the following result.

\begin{theorem}
Let $f$ be a nonnegative function such that $f(0)=0$, $\tilde \alpha\in (0,+\infty)$ and $\lim_{s\to 0^+} \frac{f(s)}{s} = 0$. Assume that~\ref{eq:hip_f_creciente} holds true for $\sigma=\tilde\alpha$. Then there is no bifurcation point from zero and there exists an unbounded continuum $\Sigma \subset \mathcal{S}$ with $\subset$-shape such that
\begin{enumerate}[i)]
\item $\|u\|_{L^\infty(\Omega)}\in (0,\tilde\alpha)$ for every $(\lambda, u)\in \Sigma$.
\item $0< \lambda_{\tilde\alpha} = \min\left( \mathrm{Proj}_{\lambda}\Sigma \right)$, where $\lambda_{\tilde\alpha}$ is defined as \[
\lambda_{\tilde\alpha} := \inf \{\lambda\geq 0: \textrm{\eqref{eq:Problema_general} admits solution $u$ with } 0 < \|u\|_{L^\infty(\Omega)}<\tilde \alpha \}
\]
\item For every $\lambda> \lambda_{\tilde\alpha}$ there exist at least two solutions $u_1, u_2$ of \eqref{eq:Problema_general}  such that $u_1\not\equiv u_2$ and 
$(\lambda,u_1),(\lambda, u_2)\in \Sigma$. 
\end{enumerate}
\end{theorem}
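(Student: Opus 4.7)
The proof closely parallels that of Theorem~\ref{teor:continuo}, with the lower ``barrier'' $0$ replacing the positive zero $\alpha$. The new ingredient throughout is the sublinearity $\lim_{s\to 0^+} f(s)/s=0$, which rules out bifurcation from zero and forces $\lambda_{\tilde\alpha}>0$. In what follows I denote by $\mu_1>0$ the first Dirichlet eigenvalue of $-\Delta$ on $\Omega$.

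First I would rule out bifurcation from zero. By sublinearity, for every $\varepsilon>0$ there exists $\delta_\varepsilon>0$ with $f(s)\leq \varepsilon s$ on $[0,\delta_\varepsilon]$; in particular $M:=\sup_{s\in (0,\tilde\alpha]} f(s)/s$ is finite. If a sequence of nontrivial solutions $(\lambda_n,u_n)$ converged in $\mathbb{R}\times L^\infty(\Omega)$ to some $(\lambda_0,0)$, then for $n$ large $-\Delta u_n\leq \lambda_n\varepsilon u_n$ pointwise, and the strong maximum principle together with eigenvalue comparison would force $\lambda_n\varepsilon\geq \mu_1$; choosing $\varepsilon$ so that $\lambda_0\varepsilon<\mu_1$ gives the contradiction. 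The same estimate applied to any solution with $0<\|u\|_{L^\infty(\Omega)}<\tilde\alpha$ yields $\lambda\geq \mu_1/M$, whence $\lambda_{\tilde\alpha}\geq \mu_1/M>0$.

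Existence of a solution for every $\lambda\geq \lambda_{\tilde\alpha}$ would then be obtained by mimicking the four steps of Lemma~\ref{lema:existencia}, with $(\alpha,\beta)$ replaced by $(0,\tilde\alpha)$. The truncated nonlinearity $\tilde f=f\,\mathbf{1}_{[0,\tilde\alpha]}$ has bounded primitive, so the associated functional is coercive and weakly lower semicontinuous; since $\int_0^{\tilde\alpha}f>0$, a test function $u_0\in \mathcal{C}^\infty_0(\Omega)$ identically equal to $\tilde\alpha$ on a compactly contained subset makes it attain a strictly negative minimum for $\lambda$ large, giving a nontrivial subsolution of~\eqref{eq:Problema_general}. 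Coupled with the constant supersolution $\tilde\alpha$, the sub/supersolution method delivers a solution with $L^\infty$-norm in $(0,\tilde\alpha)$. Closedness of the corresponding set $\Lambda_{0,\tilde\alpha}$ at the upper endpoint is handled exactly as in the third step of Lemma~\ref{lema:existencia} via Lemma~\ref{lema:no_existencia} at $\sigma=\tilde\alpha$; closedness at the ``lower endpoint'' (preventing a limit $u\equiv 0$) is precisely the no-bifurcation argument above, since $\lambda_{\tilde\alpha}$ is finite and bounded below by $\mu_1/M$.

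Finally, the unbounded $\subset$-shaped continuum is produced by rerunning the Leray-Schauder argument of Theorem~\ref{teor:continuo} on $O_{0,\tilde\alpha}=\{u\in\mathcal{C}^{1,\gamma}_0(\overline\Omega):0<\|u\|_{L^\infty(\Omega)}<\tilde\alpha\}$: the maximal solution $v$ of $(P_{\lambda_{\tilde\alpha}})$, obtained by monotone iteration from $\overline u_0=\tilde\alpha$ thanks to \ref{eq:hip_f_creciente} at $\sigma=\tilde\alpha$, is a strict subsolution for every $\lambda>\lambda_{\tilde\alpha}$; the linear problem analogous to~\eqref{eq:Dem_teor_continuo_1} yields a supersolution $\overline u$ with $v\leq \overline u\leq \tilde\alpha$; and one verifies $\mbox{\rm deg}(T(\lambda,\cdot),O_{0,\tilde\alpha},0)=0$ against $\mbox{\rm deg}(T(\lambda,\cdot),O_1\cap B_r(0),0)=1$ exactly as in the original proof. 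The main obstacle is that the ``lower face'' of $O_{0,\tilde\alpha}$ corresponds to the trivial solution rather than a nodal level set of $f$, so one must check that the relevant homotopies avoid $u\equiv 0$; the sublinearity again provides, on any compact $\lambda$-interval contained in $(0,\infty)$, a uniform $L^\infty$ separation of any nontrivial solution from $0$, keeping the degree well-defined throughout the deformation. Gluing the local continua $\Sigma_{\overline\lambda}$ for $\overline\lambda>\lambda_{\tilde\alpha}$ produces the $\subset$-shaped unbounded $\Sigma$, and the two intersection points of $\Sigma$ above each $\lambda>\lambda_{\tilde\alpha}$ give the claimed multiplicity.
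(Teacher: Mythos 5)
Your proposal is correct and follows essentially the same route as the paper: rule out bifurcation from zero by testing against the first eigenfunction, rerun the four steps of Lemma~\ref{lema:existencia} with the truncation $\tilde f_0=f\,\mathbf{1}_{[0,\tilde\alpha]}$ to obtain $\Lambda=[\lambda_{\tilde\alpha},+\infty)$ with $\lambda_{\tilde\alpha}>0$, and then repeat the degree argument of Theorem~\ref{teor:continuo} starting from the maximal solution of $(P_{\lambda_{\tilde\alpha}})$. You even make explicit the one point the paper leaves implicit, namely that the no-small-solutions estimate is what keeps the trivial solution away from the boundary of the relevant open set so the Leray--Schauder degree stays well defined.
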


\begin{proof}
First, we prove the nonexistence of bifurcation points from zero. Since $\lim_{s\to 0^+} \frac{f(s)}{s} = 0$, then for every $\varepsilon>0$ there exists some $\delta>0$ such that $f(s)<\varepsilon s$ for $s\in[0,\delta[$. Let $u$ be a solution of~\eqref{eq:Problema_general} with $\|u\|_\lio < \delta$. Taking the first eigenfunction $\varphi_1>0$ as test function in~\eqref{eq:Problema_general} we obtain that
\[
\lambda_1 \into \varphi_1 u = \into \nabla\varphi_1 \nabla u = \lambda \into f(u)\varphi_1 \leq \varepsilon\lambda \into u\varphi_1,
\]
where $\lambda_1$ denotes the first eigenvalue, and thus $\frac{\lambda_1}{\varepsilon} \leq \lambda$. Therefore, there is no solution $u$ of~\eqref{eq:Problema_general} with $\|u\|_\lio<\delta$ for $\lambda<\frac{\lambda_1}{\varepsilon}$. Since $\varepsilon$ can be arbitrarily small, we get the desired result (observe that $\delta$ decreases when $\varepsilon$ does).

Now, taking into account what we have proved, we can follow the steps of Lemma~\ref{lema:existencia} replacing $\tilde f$ by 
\begin{equation*}
\tilde f_0(s)=\begin{cases}
f(s) & \mbox{ if }  s\in[0, \tilde\alpha], \\
0 & \mbox{ if } s\notin[0, \tilde\alpha]\\
\end{cases} 
\end{equation*}
to get that the set
 \[
\{\lambda\geq 0: \textrm{\eqref{eq:Problema_general} admits solution $u$ with } 0 < \|u\|_{L^\infty(\Omega)}<\tilde \alpha \}
\]
is a nonempty closed set of the form $[\lambda_{\tilde \alpha},+\infty)$ for some $\lambda_{\tilde \alpha}>0$. Thus, we can consider the maximal solution $v$ of $(P_{\lambda_{\tilde\alpha}})$ and we can argue as in the proof of Theorem~\ref{teor:continuo} with $\beta$ replaced by $\tilde\alpha$.
\end{proof}

\begin{figure}[ht]
\centering
\begin{subfigure}[t]{.33\textwidth}
  \centering
  \includegraphics[scale=0.42]{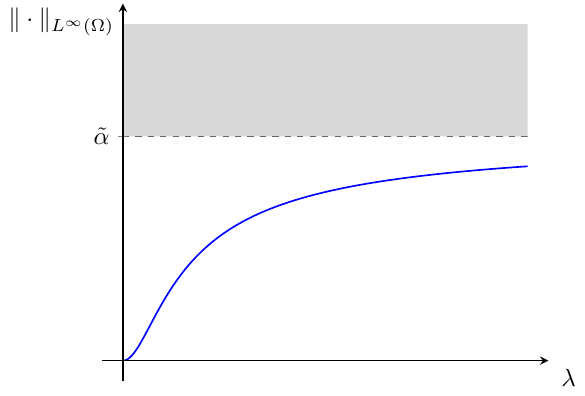}
  \caption{$f(0)>0$ or $f(0)=0$ and $f$ superlinear at 0}
\end{subfigure}%
\begin{subfigure}[t]{.33\textwidth}
  \centering
  \includegraphics[scale=0.42]{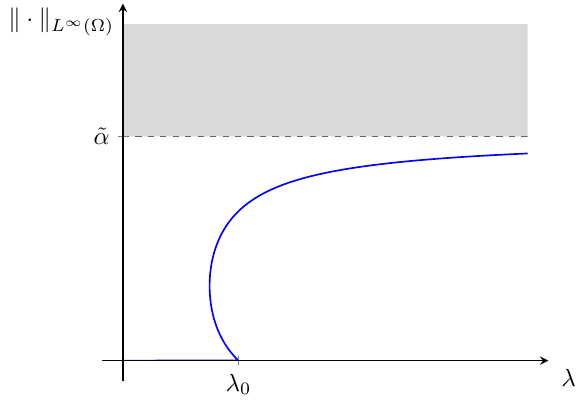}
  \caption{$f$ linear at 0}
\end{subfigure}%
\begin{subfigure}[t]{.33\textwidth}
  \centering
  \includegraphics[scale=0.42]{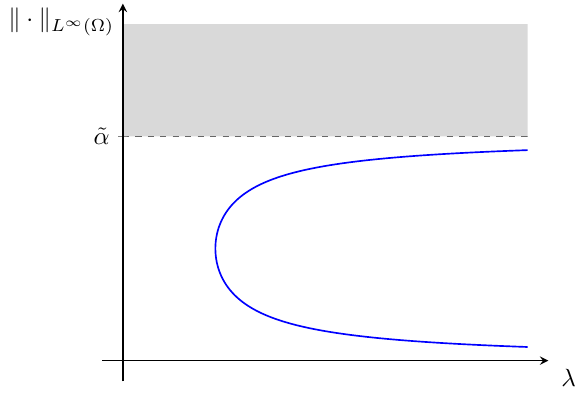}
  \caption{$f$ sublinear at 0}
\end{subfigure}
\caption{Sketch of the solution set diagrams of problem~\eqref{eq:Problema_general} below the first positive zero depending on behavior of $f$ at 0.}
\end{figure}

\subsection{The case in which $f$ is sublinear at infinity. No bifurcation from infinity.}
\label{sublinear}
\mbox{ }

Our main result in this case is the following.
\begin{theorem}
Let $f$ be a nonnegative function such that $\lim_{s\to +\infty} \frac{f(s)}{s} = 0$. Assume that~\ref{eq:hip_f_creciente} holds true in $(0,+\infty)$. Then there is no bifurcation point from infinity and there exists an unbounded continuum $\Sigma \subset \mathcal{S}$ with $\subset$-shape such that
\begin{enumerate}[i)]
\item $\|u\|_{L^\infty(\Omega)}\in (\beta, +\infty)$ for every $(\lambda, u)\in \Sigma$.
\item $0< \lambda_{\tilde\beta} = \min\left( \mathrm{Proj}_{\lambda}\Sigma \right)$, where $\lambda_{\tilde\beta}$ is defined as \[
\lambda_{\tilde\beta} := \inf \{\lambda\geq 0: \textrm{\eqref{eq:Problema_general} admits solution $u$ with } \|u\|_{L^\infty(\Omega)}>\tilde \beta \}
\]
\item For every $\lambda> \lambda_{\tilde\beta}$ there exist at least two solutions $u_1, u_2$ of \eqref{eq:Problema_general}  such that $u_1\not\equiv u_2$ and 
$(\lambda,u_1),(\lambda, u_2)\in \Sigma$. 
\end{enumerate}
\end{theorem}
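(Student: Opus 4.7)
The plan is to mirror the proof of Theorem~\ref{teor:continuo}, replacing the upper zero $\beta$ by $+\infty$ and replacing the trivial constant supersolution $\bar u\equiv\beta$ (which is unavailable because $f>0$ on $(\tilde\beta,+\infty)$) by a supersolution extracted from the sublinearity hypothesis. Sublinearity at infinity will also furnish the a priori $L^\infty$-bounds that were automatic in the bounded-interval setting of Theorem~\ref{teor:continuo}.

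\textbf{A priori bound and absence of bifurcation from infinity.} The first step is to show that for every $\Lambda>0$ there exists $C(\Lambda)>0$ such that any solution $(\lambda,u)\in\mathcal{S}$ with $\lambda\in[0,\Lambda]$ satisfies $\|u\|_{L^\infty(\Omega)}\le C(\Lambda)$. Since $\lim_{s\to\infty}f(s)/s=0$, for any $\varepsilon\in(0,\lambda_1/\Lambda)$ there exists $C_\varepsilon>0$ with $f(s)\le\varepsilon s+C_\varepsilon$ for every $s\ge 0$. Then $(-\Delta-\lambda\varepsilon)u\le\lambda C_\varepsilon$ with zero Dirichlet data, and since $\lambda\varepsilon<\lambda_1$, the maximum principle yields $u\le(-\Delta-\lambda\varepsilon)^{-1}(\lambda C_\varepsilon)$, uniformly bounded in $L^\infty(\Omega)$ on $[0,\Lambda]$. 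Applied to a putative sequence $(\lambda_n,w_n)$ witnessing bifurcation from infinity at some $\bar\lambda$ (so $\lambda_n\to\bar\lambda$ and $\|w_n\|_{L^\infty(\Omega)}\to+\infty$), this bound yields an immediate contradiction, so no $\bar\lambda\ge 0$ belongs to $\mathcal{B}_\infty$.

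\textbf{Structure of $\Lambda_{\tilde\beta+}$.} I would then prove that $\Lambda_{\tilde\beta+}:=\{\lambda\ge 0:\eqref{eq:Problema_general}\text{ admits a solution with }\|u\|_{L^\infty(\Omega)}>\tilde\beta\}$ coincides with $[\lambda_{\tilde\beta},+\infty)$ for some $\lambda_{\tilde\beta}>0$, by following the four steps of Lemma~\ref{lema:existencia}. For the subsolution, truncate $f$ as $\tilde f(s)=f(s)\chi_{[\tilde\beta,+\infty)}(s)$; sublinearity makes $\tilde F(s)=\int_0^s\tilde f$ subquadratic, so $\tilde I_\lambda(u)=\tfrac12\into|\nabla u|^2-\lambda\into\tilde F(u)$ is coercive and weakly lower semicontinuous on $H^1_0(\Omega)$ for every $\lambda>0$. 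Testing against a $u_0\in\mathcal{C}^\infty_0(\Omega)$ equal to a sufficiently large constant on an interior subdomain produces $\tilde I_\lambda(u_0)<0$ for $\lambda$ above an explicit threshold, hence a nontrivial minimizer $\underline u$ which is a subsolution of~\eqref{eq:Problema_general} with $\|\underline u\|_{L^\infty(\Omega)}>\tilde\beta$. A supersolution $\bar u$ is then given by the unique positive solution of the linear Dirichlet problem $-\Delta\bar u=\lambda\varepsilon\bar u+\lambda C_\varepsilon$: indeed $-\Delta\bar u\ge\lambda f(\bar u)$, and since $(-\Delta-\lambda\varepsilon)(\bar u-\underline u)\ge 0$ with zero boundary data, the maximum principle for $-\Delta-\lambda\varepsilon$ (valid because $\lambda\varepsilon<\lambda_1$) yields $\underline u\le\bar u$. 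The sub-supersolution method then produces a solution in $\Lambda_{\tilde\beta+}$. Closedness of $\Lambda_{\tilde\beta+}$ follows verbatim from the third step of Lemma~\ref{lema:existencia} thanks to the a priori bound of Step~1; monotonicity in $\lambda$ closes the set to the right; and $\lambda_{\tilde\beta}>0$ because for $\lambda$ small the a priori bound forces $\|u\|_{L^\infty(\Omega)}<\tilde\beta$.

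\textbf{The $\subset$-shape continuum and main obstacle.} Using~\ref{eq:hip_f_creciente} on $(0,+\infty)$, I would construct the minimal solution $v$ of $(P_{\lambda_{\tilde\beta}})$ above $\tilde\beta$ via the monotone iteration $v_{k+1}=(-\Delta+\lambda_{\tilde\beta}M)^{-1}(\lambda_{\tilde\beta}(f(v_k)+Mv_k))$ started from a subsolution with $L^\infty$-norm exceeding $\tilde\beta$ and bounded above by the supersolution $\bar u$. For each $\bar\lambda>\lambda_{\tilde\beta}$, the function $v$ is a strict subsolution of $(P_{\bar\lambda})$, and the supersolution $\bar u$ for $(P_{\bar\lambda})$ (with $\varepsilon<\lambda_1/\bar\lambda$) dominates $v$. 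In the open set $O_{\tilde\beta+}=\{u\in\mathcal{C}^{1,\gamma}_0(\overline\Omega):\|u\|_{L^\infty(\Omega)}>\tilde\beta\}$, intersected with a ball $B_r(0)$ whose radius exceeds the a priori bound on $[\lambda_1',\bar\lambda]$ for some $0<\lambda_1'<\lambda_{\tilde\beta}$, the Leray--Schauder degree argument from the proof of Theorem~\ref{teor:continuo} can be replicated: the degree on $O_{\tilde\beta+}\cap B_r(0)$ vanishes by homotopy to $\lambda_1'$ (where no solution in $O_{\tilde\beta+}$ exists), while the degree on the subregion $O_1\cap B_r(0)$ delimited by $v$ and $\bar u$ equals $+1$ via the truncated-monotone construction. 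Applying \cite[Lemma~8]{A-C-2} and taking the union over $\bar\lambda>\lambda_{\tilde\beta}$ yields the desired $\subset$-shape continuum $\Sigma$. The chief technical obstacle, absent in Theorem~\ref{teor:continuo}, is the unboundedness of $O_{\tilde\beta+}$: the degree is only defined after truncation to $B_r(0)$, and the a priori bound of Step~1 is precisely what makes the truncation compatible with homotopy invariance and prevents the continuum from escaping through $\|u\|_{L^\infty(\Omega)}=+\infty$ on any bounded $\lambda$-slab.
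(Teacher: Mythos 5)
Your overall strategy coincides with the paper's: the linear comparison function solving $-\Delta v=\lambda(\varepsilon v+C_\varepsilon)$ as a universal supersolution, the minimizer of the truncated functional as a subsolution with norm above $\tilde\beta$, the sub--supersolution method plus the closedness argument of Lemma~\ref{lema:existencia} to obtain $[\lambda_{\tilde\beta},+\infty)$, and the degree argument of Theorem~\ref{teor:continuo} for the $\subset$-shaped continuum. Your treatment of the first step is a legitimate (and slightly more quantitative) variant: the paper excludes bifurcation from infinity by normalizing a putative unbounded sequence and passing to the limit, and only afterwards deduces a priori bounds on compact $\lambda$-intervals, whereas you read both off the same linear comparison. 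That part is fine.

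The one genuine gap is in the anchoring of the continuum. You take $v$ to be the \emph{minimal} solution of $(P_{\lambda_{\tilde\beta}})$ above $\tilde\beta$, whereas the paper takes the \emph{maximal} solution, obtained by downward iteration from $\overline{v}_{\lambda_{\tilde\beta}}$, which dominates every solution. Maximality is not cosmetic here: in the proof of Theorem~\ref{teor:continuo}, each continuum $\Sigma_{\overline{\lambda}}$ produced by \cite[Lemma~8]{A-C-2} is shown to meet $\{\lambda_{\tilde\beta}\}\times \partial O_1$ at some solution $u\geq \underline u$, and the identification $u=\underline u$ --- which is what makes all the $\Sigma_{\overline{\lambda}}$ share the common point $(\lambda_{\tilde\beta},\underline u)$ and hence makes $\Sigma=\bigcup_{\overline{\lambda}>\lambda_{\tilde\beta}}\Sigma_{\overline{\lambda}}$ connected --- is drawn precisely from the maximality of $\underline u$. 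With the minimal solution you only get $u\geq \underline u$, so a priori different values of $\overline{\lambda}$ could produce continua attached to different solutions at $\lambda_{\tilde\beta}$, and the union need not be connected. (A secondary point: constructing the ``minimal solution'' by upward iteration presupposes a strict subsolution at $\lambda=\lambda_{\tilde\beta}$ with norm exceeding $\tilde\beta$, which your variational argument only supplies for large $\lambda$.) The gap is repairable: either switch to the maximal solution as the paper does, or keep an arbitrary solution $v$ of $(P_{\lambda_{\tilde\beta}})$ and add a strong maximum principle argument (using \ref{eq:hip_f_creciente} on $(0,+\infty)$ and the fact that $\overline u$ is a strict supersolution) showing that any solution of $(P_{\lambda_{\tilde\beta}})$ lying in $\overline{O_1}$ and different from $\underline u$ satisfies the strict inequalities defining $O_1$, so the crossing point is forced to equal $\underline u$ after all.
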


\begin{proof}
First, we prove the nonexistence of bifurcation points from infinity. Arguing by contradiction, assume that there exists a sequence $u_n$ of solutions of $(P_{\lambda_n})$ such that $\|u_n\|_\lio \to +\infty$, where $\lambda_n\to \tilde\lambda\geq 0$. Let $v_n:=\|u_n\|_\lio^{-1} u_n$. Observe that $\|v_n\|_\lio=1$ and that $v_n$ satisfies the problem
\begin{equation*}
    \begin{cases}
   -\Delta v_n = \lambda_n \frac{f(u_n)}{\|u_n\|_\lio} &\text{ in } \Omega,\\
   v_n=0 &\text{ on } \partial\Omega.
    \end{cases}
\end{equation*}
As $f$ is sublinear at infinity, the right-hand side of the above equation is bounded in $\lio$ and converges uniformly to 0. It is straightforward to show that a subsequence of $v_n$ converges weakly in $\huz$ and uniformly to some $\tilde v$. Now observe that on the one hand, we have $\|\tilde v\|_\lio =1$ and, on the other hand, we have that $\tilde v$ is solution of
\begin{equation*}
    \begin{cases}
   -\Delta \tilde v = 0 &\text{ in } \Omega,\\
   \tilde v=0 &\text{ on } \partial\Omega,
    \end{cases}
\end{equation*}
and thus $\tilde v =0$, a contradiction.

Now, we show that for every $\lambda>0$ problem~\eqref{eq:Problema_general} has a strict supersolution $\overline{v}_\lambda$ which is greater than any possible subsolution $\underline u_\lambda$ of~\eqref{eq:Problema_general}, regardless of whether there exists this subsolution $\underline u_\lambda$ or not. Although this is a well-known fact in the literature (for example, see \cite{Fig}), we include here the proof for the convenience of the reader.

Fix $\lambda>0$ and let $m_1>0$ be such that $m_1\lambda < \lambda_1$. Since $\lim_{s\to +\infty} \frac{f(s)}{s} = 0$ and $f$ is continuous, then $f(s)\leq m_1s + m_2$ when $s\geq 0$ for some $m_2>0$. Using that $\lambda m_1< \lambda_1$, problem
\begin{equation}
\label{eq:Dem_sublineal_inf_1}
    \begin{cases}
   -\Delta v = \lambda (m_1 v + m_2) &\text{ in } \Omega,\\
   v=0 &\text{ on } \partial\Omega,
    \end{cases}
\end{equation}
has a solution $\overline{v}_\lambda$ and, in fact, $\overline{v}_\lambda>0$ as a consequence of the strong maximum principle~\cite[Theorem~1.14]{Fig}. Observe also that if $\underline u_\lambda$ is a positive subsolution of~\eqref{eq:Problema_general}, then it is also a subsolution of~\eqref{eq:Dem_sublineal_inf_1}. In this way, $\overline{v}_\lambda-\underline u_\lambda$ satisfies problem
\begin{equation*}
    \begin{cases}
   -\Delta v \geq \lambda m_1 v &\text{ in } \Omega,\\
   v=0 &\text{ on } \partial\Omega,
    \end{cases}
\end{equation*}
and again the maximum principle leads to $\underline u_\lambda\leq \overline{v}_\lambda$, as we wanted to prove.

Consider now problem~\eqref{eq:Problema_general} with $f$ replaced by
\begin{equation*}
\tilde f_\infty(s)=\begin{cases}
0 & \mbox{ if }  s < \tilde\beta, \\
f(s) & \mbox{ if } s \geq \tilde\beta\\
\end{cases} 
\end{equation*}
and consider also the energy functional associated
\begin{equation*}
    \tilde{I}_\lambda(u) = \frac{1}{2} \into |\nabla u|^2 - \lambda\into \tilde{F}_\infty(u),\ \forall u\in\huz,
\end{equation*}
where $\tilde{F}_\infty(s) = \int_0^s \tilde{f}_\infty(t)\, \mathrm{d}t$. Since $f$ is sublinear at infinity, this functional is coercive and weakly lower semicontinuous. Thus, $\tilde{I}_\lambda$ achieves its infimum. Observe that we can take $u\in\huz$ so that $\tilde{F}_\infty(u)>0$ and, in this way, $\tilde{I}_\lambda(u)<0$ for large $\lambda$. Therefore, problem~\eqref{eq:Problema_general} with $f$ replaced by $\tilde f_\infty$ has a non trivial solution $\underline v_\lambda$ which has to be positive with $\lio$-norm greater than $\tilde\beta$ by Lemma~\ref{lem:AmbHess}.

Clearly $\underline v_\lambda$ is a subsolution of~\eqref{eq:Problema_general} for large fixed $\lambda$ and consequently $\underline v_\lambda\leq \overline v_\lambda$. Thus, we can apply the sub-supersolution method with $\underline v_\lambda$ and $\overline v_\lambda$ to ensure the existence of a positive solution $v_\lambda$ of~\eqref{eq:Problema_general} with $\lio$-norm greater than $\tilde\beta$.

Since there are no bifurcation points from infinity, the solutions of~\eqref{eq:Problema_general} are a priori bounded for $\lambda$ in compact intervals of $\re_0^+$. Due to this, we can argue as in Lemma~\ref{lema:existencia} to prove that
\[
\{\lambda\geq 0: \textrm{\eqref{eq:Problema_general} admits solution $u$ with } \|u\|_{L^\infty(\Omega)}>\tilde \beta \}
\]
is a nonempty closed set with the form $[\lambda_{\tilde \beta},+\infty)$ for some $\lambda_{\tilde \beta}>0$. Moreover, since for every $\lambda\in[\lambda_{\tilde \beta},+\infty)$ the supersolution $\overline{v}_\lambda$ is greater than every solution of~\eqref{eq:Problema_general}, we can apply an iterative scheme (see \cite{amann}) to obtain the existence of a maximal solution of~\eqref{eq:Problema_general} which necessarily has $\lio$-norm greater than $\tilde{\beta}$.

Now, we can argue as in Theorem~\ref{teor:continuo} with $\underline{u}$ as the maximal solution of $(P_{\lambda_{\tilde\beta}})$ and $\overline{u}$ as $\overline{v}_{\overline{\lambda}}$ for the $\overline{\lambda}> \lambda_{\tilde\beta}$ fixed in the proof.
\end{proof}

\subsection{Open problem. Bifurcation from infinity. The case in which $f$ is superlinear and subcritical.}
\label{openproblem}
\mbox{ }

When $f$ is superlinear and subcritical at infinity (i.e. $\lim_{s\to +\infty} \frac{f(s)}{s} = +\infty$ and $\lim_{s\to +\infty} f(s)s^{-\frac{N+2}{N-2}} = 0$), the question we address is much more complicated. Lions proved in~\cite[Theorem~3.1]{Lions} using a topological degree argument that if the domain $\Omega$ is convex and the technical condition
\begin{equation}
\label{eq:superlineal_condicion_tecnica}
    \limsup_{s\to+\infty} \frac{sf(s)-\theta F(s)}{s^2f(s)^{2/N}}\leq 0 \text{ for some } 0<\theta<\frac{2N}{N-2}
\end{equation}
is satisfied, then problem~\eqref{eq:Problema_general} has a solution satisfying $\|u\|_\lio>\beta$. This technical assumption~\eqref{eq:superlineal_condicion_tecnica}, which was first introduced in~\cite{Fig-Lions-N}, was only used to obtain a priori bounds. Lions conjectured that neither the convexity of the domain nor condition~\eqref{eq:superlineal_condicion_tecnica} were necessary to obtain this existence result. This is the case when $f$ behaves at infinity as $t^r$ for $1<r<\frac{N+2}{N-2}$, as proved in~\cite{GS}.

%In~\cite{Fig-Lions}, this conjecture was partially confirmed by de Figueiredo and Lions. They studied a general problem (i.e. not necessarily having a nonlinearity with zeros) superlinear and subcritical at infinity, and they were able to prove the existence of solution assuming only the convexity of the domain. However, when $f(\beta)=0$ the techniques used by the authors do not give information about the existence of a solution with $\lio$-norm greater than $\beta$. Subsequently, this fact was finally proved in~\cite{Liu-Li} by Liu and Li assuming, again, that the domain is convex.

%Up to our knowledge, if the convexity assumption can be removed is still an open question. Moreover, we do not know any result dealing with the existence of some global continuum of positive solutions with $\lio$-norm greater than $\beta$. We think that it exists a continuum of solutions meeting $(0,\infty)$ and whose projection on the $\lambda$-axis is unbounded, but we left this issue as an open question.
%

To our knowledge, there is no result dealing with the existence of a global continuum of positive solutions with $\lio$-norm greater than $\beta$. We think that it exists a continuum of solutions meeting $(0,\infty)$ and whose projection on the $\lambda$-axis is unbounded, but we left this issue as an open question.

\begin{figure}[ht]
\centering
\begin{subfigure}{0.33\textwidth}
  \centering
  \includegraphics[scale=0.43]{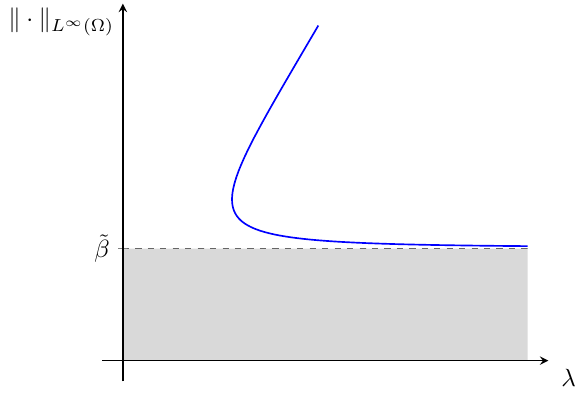}
  \caption{$f$ sublinear at infinity}
\end{subfigure}%
\begin{subfigure}{0.33\textwidth}
  \centering
  \includegraphics[scale=0.43]{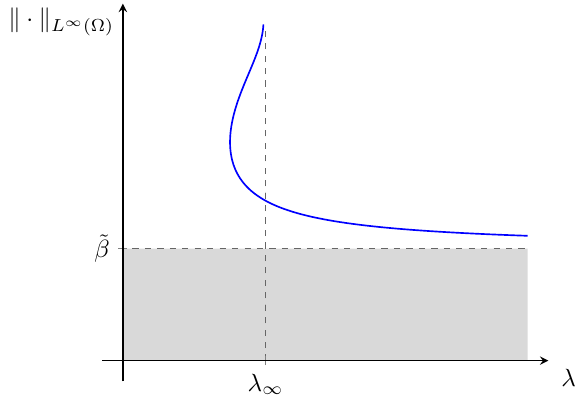}
  \caption{$f$ linear at infinity}
\end{subfigure}%
\begin{subfigure}{0.33\textwidth}
  \centering
  \includegraphics[scale=0.43]{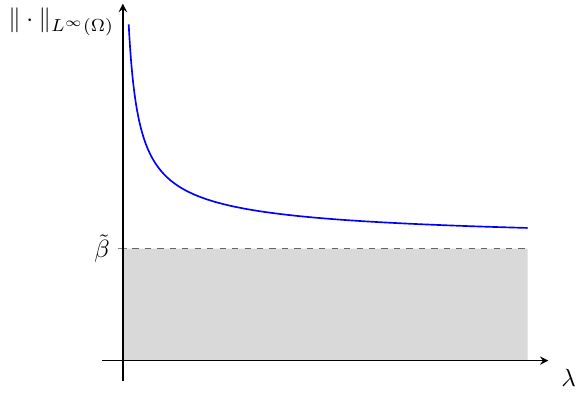}
  \caption{$f$ superlinear at infinity}
\end{subfigure}
\caption{Sketch of the bifurcation diagrams of problem~\eqref{eq:Problema_general} above the last positive zero depending on the behavior of $f$ at infinity.}
\end{figure}

\centerline{\bf{ Author's statements}}

 \noindent{\bf Funding information.}
 \noindent Research supported by (MCIU) Ministerio de Ciencia, Innovaci\'on y Universidades, Agencia Estatal de Investigaci\'on (AEI) and Fondo Europeo de Desarrollo Regional under Research Project PID2021-122122NB-I00 (FEDER). First  and second author supported by Junta de Andaluc\'ia FQM-194.  First author supported by CDTIME.  Second author has been funded by the FPU predoctoral fellowship of the Spanish Ministry of Universities (FPU21/04849). Third author supported by Junta de Andaluc\'{\i}a FQM-116.
 
 \noindent{\bf Authors contributions.}
 \noindent All authors have contributed more or less equally to the preparation of this manuscript. All authors have accepted responsibility for the entire content of this manuscript and consented to its
submission to the journal, reviewed all the results and approved the final version of the manuscript.

 \noindent{\bf Conflict of interest.}
 \noindent Authors state no conflict of interest.


\begin{thebibliography}{99}
%\bibitem{key} Info of the paper, book, ...
\bibitem{amann} H. Amann, \textit{Supersolutions, Monotone Iterations, and Stability}, J. Differential Equations  {\bf 21} (1976) 363--377.

\bibitem{Amb-Hess} A. Ambrosetti, P. Hess, \textit{Positive solutions of asymptotically linear elliptic eigenvalue problems}, J. Math. Anal. Appl. {\bf 73} (1980), no. 2, 411--422.

% \bibitem{A-C-1} D. Arcoya, J. Carmona, \textit{A nondifferentiable extension of a theorem of Pucci and Serrin and applications.} J. Differential Equations {\bf 235} (2007), no. 2, 683--700.

\bibitem{A-C-2} D. Arcoya, J. Carmona, \textit{Quasilinear elliptic problems interacting with its asymptotic spectrum}. Nonlinear Anal. {\bf 52} (2003), no. 6, 1591--1616.

\bibitem{A-C-MA} D. Arcoya, J. Carmona, P. J. Mart\'{i}nez-Aparicio, \textit{Multiplicity of solutions for an elliptic Kirchhoff equation}. Milan J. Math. {\bf 90} (2022), no. 2, 679--689.

\bibitem{A-C-Pellacci} D. Arcoya, J. Carmona, B. Pellacci, \textit{Bifurcation for some quasilinear operators}, Proc. Roy. Soc. Edinburgh Sect. A {\bf 131} (2001), no.4, 733--765.

\bibitem{B-GM-I} B. Barrios, J. Garc\'{i}a-Meli\'{a}n, L. Iturriaga, \textit{Semilinear elliptic equations and nonlinearities with zeros}, Nonlinear Anal. {\bf 134} (2016), 117--126.

\bibitem{BenciFortunato} V. Benci, D. Fortunato, \textit{Does bifurcation from the essential spectrum occur?}, Comm. Partial Differential Equations, {\bf 6} (1981), no. 3, 249--272.

\bibitem{Brown-Budin} K. J. Brown, H. Budin, \textit{On the existence of positive solutions for a class of semilinear elliptic boundary value problems}, SIAM J. Math. Anal. {\bf 10} (1979), no. 5, 875--883.

\bibitem{CR} J. Carmona, R. Fi\~nana, \textit{Existence, nonexistence and multiplicity of bounded solutions to a nonlinear BVP associated to the fractional Laplacian}. Submitted to publication.

%\bibitem{Clement-Sweers-1} P. ClÃ©ment, G. Sweers, \textit{Existence and multiplicity results for a semilinear elliptic eigenvalue problem}, Ann. Scuola Norm. Sup. Pisa Cl. Sci. (4) 14 (1987), no. 1, 97--121.

\bibitem{Clement-Sweers-2} P. Cl\'ement, G. Sweers, \textit{Getting a solution between sub- and supersolutions without monotone iteration}, Rend. Istit. Mat. Univ. Trieste {\bf 19} (1987), no. 2, 189--194.


\bibitem{correa1} F. J. S. A. Corr\^ea, M. L. M. Carvalho, J. V. A. Gon\c{c}alves, K. O. Silva, \textit{On the existence of infinite sequences of ordered positive solutions of nonlinear elliptic eigenvalue problems}, Adv. Nonlinear Stud. {\bf 16} (2016), no. 3, 439--458.

\bibitem{correa2} A. Correa, Y. Li, \textit{Bifurcation theory for a class of second order differential
              equations}, Houston J. Math. {\bf 39} (2013), no. 1, 231--245.

%\bibitem{Dancer-Schmitt} E. N. Dancer, K. Schmitt, \textit{On positive solutions of semilinear elliptic equations}, Proc. Amer. Math. Soc. 101 (1987), no. 3, 445--452.

\bibitem{Stuart} G. Ev\'equoz, C. A. Stuart, \textit{Bifurcation points of a degenerate elliptic boundary-value problem}, Atti Accad. Naz. Lincei Rend. Lincei Mat. Appl. {\bf 17} (2006), no. 4, 309--334.

\bibitem{Fig} D. G. de Figueiredo, \textit{Positive solutions for some classes of semilinear elliptic problems}, Differential equations (Sao Paulo, 1981), pp. 34--87,
Lecture Notes in Math., 957,
Springer-Verlag, Berlin-New York, 1982. 

%\bibitem{Fig-Lions} D. G. de Figueiredo, P.-L. Lions, \textit{On pairs of positive solutions for a class of semilinear elliptic problems}, Indiana Univ. Math. J.34(1985), no.3, 591â606.

\bibitem{Fig-Lions-N} D. G. de Figueiredo, P.-L. Lions, R. D. Nussbaum, \textit{A priori estimates and existence of positive solutions of semilinear elliptic equations}, J. Math. Pures Appl. (9) {\bf 61} (1982), no.1, 41--63.

\bibitem{GM-I} J. Garc\'{i}a-Meli\'{a}n, L. Iturriaga, \textit{Multiplicity of solutions for some semilinear problems involving nonlinearities with zeros}, Israel J. Math. {\bf 210} (2015), no. 1, 233--244.

\bibitem{GS} B. Gidas, J. Spruck, \textit{A priori bounds for positive solutions of nonlinear elliptic equations}, Comm. Partial Differential Equations {\bf 6} (1981), no. 8, 883--901.

\bibitem{Gil-Trud} D. Gilbarg, N. Trudinger, \textit{Elliptic Partial Differential Equations of Second Order}, Springer, Berlin, 1977.

\bibitem{Hess} P. Hess, \textit{On multiple positive solutions of nonlinear elliptic eigenvalue problems}, Comm. Partial Differential Equations {\bf 6} (1981), no. 8, 951--961.


\bibitem{L-U} O. A. Ladyzhenskaya, N. N. Ural'tseva, \textit{Linear and Quasilinear Elliptic Equations}, Academic Press, New York, London,
1968. Translated from Russian by Scripta Technica, Inc. Translation editor: Leon Ehrenpreis.

\bibitem{Lazer-McKenna} A. C. Lazer, P. J. McKenna, \textit{On the number of solutions of a nonlinear Dirichlet problem}. J. Math. Anal. Appl. {\bf 84} (1981), no. 1, 282--294. 

\bibitem{Lions} P.-L. Lions, \textit{On the existence of positive solutions of semilinear elliptic equations}, SIAM Rev. {\bf 24} (1982), no. 4, 441--467.

\bibitem{Liu} Z. Liu, \textit{Positive solutions of a class of nonlinear elliptic eigenvalue problems}, Math. Z. {\bf 242} (2002), no. 4, 663--686.

\bibitem{Rab} P. H. Rabinowitz, \textit{Some global results for nonlinear eigenvalue problems}, J. Functional Analysis {\bf 7}  (1971), 487--513.

\bibitem{Rynne} B. P. Rynne, \textit{Oscillating global continua of positive solutions of nonlinear elliptic problems}, Proc. Amer. Math. Soc. {\bf 128} (2000), no. 1, 229--236.

\bibitem{Stamp} G. Stampacchia, \`Equations elliptiques du second ordre \`{a} coefficients discontinus, S\'em. Math. Sup., no. 16 (\'Et\'e, 1965) [Seminar on Higher Mathematics], \emph{Les Presses de l'Universit\'e de Montr\'eal, Montreal, QC.} 1966, 326 pp.

\bibitem{XSOW} X. Xu, L. Sun, D. O'Regan, Z. Wang, \textit{Asymptotic oscillations of global solution branches for nonlinear problems}, Positivity {\bf 25} (2021), no. 4, 1511--1541.
\end{thebibliography}
\end{document}